\documentclass[reqno,11pt,a4paper]{amsart} 
\usepackage{csquotes} 
\usepackage{comment} 
\usepackage{mathtools}
\usepackage{amssymb}
\usepackage{mleftright}  
\usepackage{xfrac}
\usepackage{mathrsfs}
\usepackage[dvipsnames]{xcolor}
\usepackage[colorlinks,linkcolor=RedViolet,citecolor=PineGreen,anchorcolor=Green,urlcolor=RedViolet]{hyperref}
\usepackage[headings]{fullpage}
\usepackage[numbers,square]{natbib}
\usepackage[shortlabels,inline]{enumitem}
\usepackage{anyfontsize}
\usepackage{scalerel}
\usepackage[foot]{amsaddr}
\usepackage{bbm}

\makeatletter
\newcommand*\rel@kern[1]{\kern#1\dimexpr\macc@kerna}
\newcommand*\widebar[1]{%
  \begingroup
  \def\mathaccent##1##2{%
    \rel@kern{0.8}%
    \overline{\rel@kern{-0.8}\macc@nucleus\rel@kern{0.2}}%
    \rel@kern{-0.2}%
  }%
  \macc@depth\@ne
  \let\math@bgroup\@empty \let\math@egroup\macc@set@skewchar
  \mathsurround\z@ \frozen@everymath{\mathgroup\macc@group\relax}%
  \macc@set@skewchar\relax
  \let\mathaccentV\macc@nested@a
  \macc@nested@a\relax111{#1}%
  \endgroup
}
\makeatother
  
\newcommand{\E}{\textsf{\upshape E}}
\renewcommand{\P}{\textsf{\upshape P}}
\newcommand{\Q}{\textsf{\upshape Q}}
\newcommand{\indicator}[1]{\mathbf{1}_{#1}}

\DeclareMathOperator{\trace}{trace}

\newcommand{\sigalg}[1]{\mathscr{#1}} 

\newcommand{\Exp}{\mathscr{E}}

\DeclarePairedDelimiter{\abs}{\lvert}{\rvert}

\newcommand{\lc}{[\![}

\newcommand*{\bigs}[1]{\scalebox{1.2}{\ensuremath#1}}

\newcommand{\N}{\mathbb{N}}

\newcommand{\R}{\mathbb{R}}

\DeclareMathOperator{\id}{id}
\DeclareMathOperator{\tr}{tr}

\newcommand{\rt}{\rho}
\newcommand{\lopt}{\widehat\lambda}
\newcommand{\piopt}{\widehat\pi}
\newcommand{\lLRT}{\lambda^{\rt}}

\newcommand{\F}{\sigalg{F}}
\newcommand{\D}{\mathcal{D}}

\newcommand{\xinf}{\underline{x\mkern-1.5mu}\mkern1.5mu}
\newcommand{\xinfLRT}{\xinf_\rt}
\renewcommand{\d}{\mathrm{d}}
\DeclareMathOperator*{\argmax}{arg\,max}

\newcommand{\gfrak}{\mathfrak{g}}
\newcommand{\ffrak}{\mathfrak{f}}

\newcommand{\gLRT}{g_{\rt}}

\newcommand{\e}{\mathrm{e}}
\DeclareMathOperator{\edom}{e-dom}
\DeclareMathOperator{\dom}{dom}

\newcommand\shorturl[1]{%
  \href{http://#1}{\nolinkurl{#1}}%
}

\makeatletter
\newcommand*{\bigcdot}{}
\DeclareRobustCommand*{\bigcdot}{%
  \mathbin{\mathpalette\bigcdot@{}}%
}
\newcommand*{\bigcdot@scalefactor}{.7}
\newcommand*{\bigcdot@widthfactor}{1.15}
\newcommand*{\bigcdot@}[2]{%
  \sbox0{$#1\vcenter{}$}
  \sbox2{$#1\cdot\m@th$}%
  \hbox to \bigcdot@widthfactor\wd2{%
    \hfil
    \raise\ht0\hbox{%
      \scalebox{\bigcdot@scalefactor}{%
        \lower\ht0\hbox{$#1\bullet\m@th$}%
      }%
    }%
    \hfil
  }%
}
\makeatother
\newcommand{\sint}{\bigcdot}

\newtheorem{theorem}{Theorem}[section]
\theoremstyle{plain}

\newtheorem{corollary}[theorem]{Corollary}

\newtheorem{definition}[theorem]{Definition}
\newtheorem{example}[theorem]{Example}

\newtheorem{lemma}[theorem]{Lemma}

\newtheorem{proposition}[theorem]{Proposition}
\newtheorem{remark}[theorem]{Remark}

\numberwithin{equation}{section}

\begin{document}
\title{On integrability of local utility maximizing strategies}
\author{Ale\v{s} \v{C}ern\'{y}$^{a,b}$}
\address{$^a$Bayes Business School, City St George's, University of London\\
106 Bunhill Row, London EC1Y 8TZ, UK\vspace{-0.1ex}}
\email{\!ales.cerny@citystgeorges.ac.uk}
\author{Johannes Ruf$^b$}
\email{j.ruf@lse.ac.uk}
\address{$^b$Department of Mathematics\\
  London School of Economics and Political Science\\
	Houghton Street, London, WC2A 2AE, UK\vspace{-0.1ex}}

\date{\today}
    
\begin{abstract}
In stochastic control problems involving optimal portfolio choice, one often seeks an optimizer of some local criterion (dynamic programming value function) generated by some global criterion (e.g., expected utility). The local optimizer then typically acts as a stochastic integrand with respect to some multivariate stochastic process (e.g., asset prices). This paper takes a closer look at local criteria in the form of ``local expected utility''. Since the existence of the local optimizer can be established under very mild conditions, it is crucial to understand under what circumstances the local optimizer has good integrability properties. It is shown that the reasonable asymptotic elasticity condition of Kramkov and Schachermayer \nocite{kramkov.schachermayer.99} applied to the local utility function plays a key role in this context.
\end{abstract}

\maketitle
\mleftright 

\section{Introduction}
We isolate and study a problem encountered in financial mathematics, whereby one seeks a local (in time) optimizer of some local criterion (here modelled as the local expected utility) with a view that the local optimizer will then be optimal for some global criterion that generated the local value function in the first place. We have in mind applications, where the local optimizer  acts as a stochastic integrand (optimal portfolio weights) $\lambda$ with respect to some underlying exogenous stochastic process (asset returns) $R$. The main aim of this paper is to understand under what circumstances the local optimizer has good integrability properties. 

Local expected utility in this paper is a predictable concave function $\gfrak$ obtained as the drift rate of the $g$-variation of a $d$-dimensional stochastic integral $\int_0^\cdot\lambda_t\d R_t$, for some concave function $g$. The predictable criterion $\gfrak(\lambda)$ is to be maximized pointwise, for each $(\omega,t)$, over $\lambda\in\R^d$. A predictable optimizer, if it exists, is then referred to as the \emph{local optimizer}, say $\lopt$. Our aim is first to establish existence of local optimizers and then formulate necessary and sufficient criteria for $\lopt$ to integrate $R$ in terms of the optimized value $\gfrak(\lopt)$.

This work is not the first to look at local utility. The concept was originally proposed and studied in \citet{kallsen.99,kallsen.02} for a specific class of time-constant deterministic concave functions $g$ without any link to global utility maximization. Related local criteria subsequently appeared in several global utility-maximization problems, often expressed through model-specific objectives or first-order conditions. In the context of (exponential) L\'evy models, local expected utility maximization for specific functions $g$ appears in, e.g., \citet{kallsen.00}, \citet{kardaras.09}, and \citet{nutz.12.mafi}.
It also features indirectly in the form of first-order conditions in \citet{jeanblanc.al.07} and \citet{bender.niethammer.08}. For deterministic time-constant $g$ and L\'evy processes, however, integrability of the local optimizer is not an issue since the optimizer, if it exists, can be chosen deterministic and time-constant.

Applications where time-inhomogeneous maximization of local expected utility appears include \citet{goll.kallsen.00,goll.kallsen.03}    and \citet{karatzas.kardaras.07} (logarithmic utility), \citet{kallsen.muhle-karbe.10.aap,kallsen.muhle-karbe.10.ijtaf} (power utility),  \citet{fujiwara.10} (exponential utility), and  \citet{crs.25.aheadofprint} (monotone quadratic utility). In time-inhomogeneous models, a systematic study of the existence and integrability of local utility maximizers has the potential to simplify existing and uncover new results. Below, we illustrate this point by simplifying the  assumptions needed in \citet[Theorem~3.1]{kallsen.muhle-karbe.10.aap}.
 
The hope of formulating universal integrability theorems for local optimizers comes from a recent  result of \citet*[Theorem~2.1]{crs.25.aheadofprint}, which establishes equivalence between (i) finiteness of cumulative local utility generated by the local optimizer; and (ii) integrability of the local optimizer with respect to the asset price process, for the monotone quadratic utility function 
\begin{equation}\label{eq:gMMV}
    g = \id\wedge1-\frac{1}{2}(\id\wedge1)^2
\end{equation}
and semimartingale integrators with independent increments satisfying a mild no-arbitrage condition.

In this paper we establish a similarly close relationship between (i) and (ii) for a broad subset of (for the time being time-constant and deterministic) non-decreasing concave functions $g$ twice Peano-differentiable at $0$ with $g(0)=0$, and general instantaneously arbitrage-free semimartingale integrators. Corollary~\ref{C} shows that (ii)\!\! $\implies$\!\! (i) if $g$ is bounded above (and the local optimizer exists).  
Theorem~\ref{T} and subsequent counterexamples show that the crucial direction (i)\!\! $\implies$\!\! (ii) holds for a subset of $g$ with $g''(0)<0$, characterised by $g$ having a certain asymptotic property at $\pm\infty$, namely
\begin{equation}\label{RAE}
(\inf\dom g>-\infty) \vee \left(\liminf_{x\downarrow-\infty}\frac{x g'_+(x)}{g(x)}>1\right)\qquad\text{and}\qquad \limsup_{x\uparrow\infty}\frac{x g'_-(x)}{g(x)}<1, 
\end{equation}
which was previously introduced and employed in a different context by \citet{kramkov.schachermayer.99} and \citet{schachermayer.01} under the name of \emph{reasonable asymptotic elasticity (RAE)}. The paper provides counterexamples how  (i) \!\!$\iff$\!\! (ii) fails for $g$ bounded above, or how (ii)\!\! $\implies$\!\! (i) fails for $g$ unbounded above, when RAE~\eqref{RAE} is replaced by the weaker Inada condition~\eqref{eq:Inada}. Theorem~\ref{T} also provides corresponding results for predictable functions $g$.

The paper is organised as follows. Section~\ref{S:2} sets up the notation and introduces local expected utility. Section~\ref{S:3} establishes generic existence of local expected utility optimizers under the Inada condition 
\begin{equation}\label{eq:Inada}
\frac{1}{g'(-\infty)}=g'(\infty)=0,
\end{equation} 
where, as is common in convex analysis, we set $g'=\infty$ on $(-\infty,\inf\dom g)$.  
A generic counterexample shows that without \eqref{eq:Inada} existence cannot be guaranteed in general.   
Section~\ref{S:4} contains the main result of the paper. An illustrative application is discussed in Section~\ref{S:5}. Counterexamples that demonstrate tightness of the main result are collected in Section~\ref{S:6}. Appendix~\ref{Appx:A} contains measurable selection arguments; Appendix~\ref{Appx:B} provides results on second-order Peano differentiability; Appendix~\ref{Appx:C} develops calculus of ``variations'' on characteristic  triplets and studies optimization of convex functions generated by such ``variations''; and Appendix~\ref{Appx:D} provides the proof of Lemma~\ref{L:T}.

\section{Setup}\label{S:2}
Throughout, we fix a finite time horizon $T<\infty$ and work with all
processes stopped at $T$. 

\subsection{Local utility functions}
\begin{definition}
   We say that a predictable $g:\R\to[-\infty,\infty)$ is a \emph{local utility function} if it is  concave, non-decreasing, upper semi-continuous, and twice Peano-differentiable at $0$ with $g(0)=0$. 
\end{definition}
\begin{remark}
By Proposition~\ref{P:rock}, the second Peano derivative of a concave function equals the classical second derivative computed on a punctured neighbourhood.
\end{remark}

\begin{example} \label{ex:LRT}
In applications, one commonly encounters local utility functions obtained by monotonizing the family of linear risk tolerance (LRT) utilities (see, e.g., \citet{cass.stiglitz.70})
\begin{equation*}
\gLRT(x) = 
\begin{cases}
\left.
\begin{array}[c]{cc}%
\frac{((1+\rt x)\vee0)^{1-1/\rt}-1}{\rt-1},& \phantom{0}\ \,\rt<0\phantom{\neq}\\[1ex]
1-\e^{-x}, & \ \rt=0
\end{array}
\right\}  & \qquad x\in\R,\\[4ex]
\left.
\begin{array}[c]{cc}%
\frac{(1+\rt x)^{1-1/\rt}-1}{\rt-1}, & 0<\rt\neq 1\\[1ex]
 \makebox[0pt][l]{\quad\ \,$\log (1+x),$} \phantom{\frac{((1+\rt x)\vee0)^{1-1/\rt}-1}{\rt-1}} & \,\rt = 1
\end{array}
\right\} & \qquad x>-\frac{1}{\rt}.%
\footnotemark 
\end{cases}
\end{equation*}
\footnotetext{To ensure upper semicontinuity for $\rt>0$, one takes $\gLRT=-\infty$ on $(-\infty,-\frac{1}{\rt})$, $\gLRT(-\frac{1}{\rt})=-\infty$ for $\rt\in(0,1]$,  and $\gLRT(-\frac{1}{\rt})=\frac{-1}{\rt-1}$ for $\rt>1$.}
%
Here, the local utility function \eqref{eq:gMMV} from the introduction is obtained by taking $\rt=-1$.
\end{example}

\subsection{Variations}
The following setup is adapted from \citet*{crs.25.aheadofprint}. 
Fix $d\in\N$ and consider an $\R^d$-valued semimartingale $R$. We write $L(R)$ for the set of predictable processes that integrate $R$. For $\zeta\in L(R)$, we often use the shortcut notation $\zeta\sint R = \int_0^\cdot \zeta_t\d R_t$. For a time-constant deterministic $g:\R\to[-\infty,\infty]$ with $g(0)=0$, twice Peano-differentiable at $0$, and $\zeta\in L(R)$ such that $\zeta\Delta R\in\dom g$, the process 
$$g\circ (\zeta\sint R) = g'(0)
\zeta\sint R +\frac{1}{2}g''(0)[\zeta\sint R,\zeta\sint R]^c + \sum_{t\leq\cdot}\bigs(g(\zeta_t\Delta R_t)-g'(0)\zeta_t\Delta R_t\bigs) $$
is the $g$-variation of $\zeta\sint R$ by \citet[Th\'eor\`eme~2]{emery.78}. For predictable functions, the same operation can be defined under suitable conditions by the results in \citet*{cerny.ruf.22.ejp}; below it will only be needed for the process-level interpretation of the corresponding drift rates. We write $\id$ for the identity function, i.e., $\id(x)=x$ and $\id_1$, $\id_2$, etc.~for its components, i.e., $\id_1(x)=x_1$, $\id_2(x)=x_2$, and so on. We also fix a universal truncation function $\tr$ by setting 
\begin{equation}\label{tr}
\text{$\tr_1 = \id_1\indicator{\abs{\id_1}\leq 1}$, $\tr_2 = \id_2\indicator{\abs{\id_2}\leq 1}$, and so on,}
\end{equation}
and write  $\tr\circ R$ for the process $R$ with large jumps removed.

\subsection{Semimartingale characteristics and drift formulae}
By \citet*[II.2.6--9]{js.03}, there exists a ``good version'' of semimartingale characteristics and a predictable non-decreasing locally integrable process $A$ such that the characteristics of $R$ are absolutely continuous with respect to $A$. We denote the corresponding predictable differential characteristics by $(b^{\tr\circ R}, c^R, F^R)$. If the integrals on the right-hand side are defined and finite, we sometimes write 
\begin{align*}
 b^{R[0]}={}& b^{\tr\circ R}-\int_{\R^d}\tr(x)F^R(\d x),\\
 b^R={}& b^{\tr\circ R}+\int_{\R^d}(x-\tr(x))F^R(\d x).
\end{align*}

All (in)equalities involving processes
are assumed to hold $(\P\times A)$-almost everywhere. 

\begin{definition}\label{D:260612}
We say that a predictable function $\xi$ is compatible with $R$ if
$F^R((\dom\xi)^c)=0$. We say that a twice Peano-differentiable predictable function $\xi$  with
$\xi(0)=0$ is
$\sigma$-locally $\sigma$-special relative to $R$ if it is compatible
with $R$ and
\[
\int_{\R^d}\abs{\xi(x)}
\indicator{\abs{\xi(x)}>1}F^R(\d x)<\infty.
\]
In this case, we write
\[
b^{\xi,R}
=
D\xi(0)b^{\tr\circ R}
+\frac{1}{2}\trace(D^2\xi(0)c^R)
+\int_{\R^d}\bigs(\xi(x)-D\xi(0)\tr(x)\bigs)F^R(\d x).
\]
\end{definition}

The terminology ``$\xi$ is $\sigma$-locally $\sigma$-special relative to $R$'' has the following process-level interpretation. If
$(D_n)_{n\in\N}$ is a $\sigma$-localising sequence such that
$\xi\circ(\indicator{D_n}\sint R)$ is well defined, then these processes
are $\sigma$-special and
\[
b^{\xi\circ(\indicator{D_n}\sint R)}
=
\indicator{D_n}b^{\xi,R},
\qquad n\in\N;
\]
see \citet*[Lemma~D.1]{crs.25.aheadofprint}. Thus $b^{\tr\circ R}=b^{\tr,R}$, and $b^R=b^{\id\circ R}=b^{\id,R}$ if $R$ is special, or more generally, $\sigma$-special. 

\subsection{Instantaneous absence of arbitrage}
The following definition formulates a mild no-arbitrage assumption weaker than the existence of an equivalent $\sigma$-martingale measure for $R$. 

\begin{definition}\label{D:INA}
We say that a given triplet $(b^{\tr\circ R}, c^R, F^R)$ 
is \emph{instantaneously arbitrage-free (INA) at $(\omega,t)$} if for all   
$\lambda\in\R^d$ one has 
\begin{equation} \label{eq:INA}
\left.
\begin{gathered}
 \left(\lambda c^R_t = 0 \text{ and } \int_{\R^d} \indicator{\lambda x< 0}F^R_t(\d x) = 0\right) \quad \implies \\
\left(\int_{\R^d} \indicator{0<\lambda x}F^R_t(\d x) > 0 \text{ and }
\lambda b^{\tr\circ R}_t-\int_{\R^d}  \lambda \tr(x) F^R_t(\d x)<0\right)\\
\quad\text{or}\quad \\
\ \left(\int_{\R^d} \indicator{0<\lambda x}F^R_t(\d x) = 0 \text{ and } \lambda b^{\tr\circ R}_t - \int_{\R^d}  \lambda \tr(x) F^R_t(\d x)=0\right).
\end{gathered}
\qquad\right\}
\end{equation}
We say that $R$ is instantaneously arbitrage-free if its triplet is INA  $(\P\times A)$-everywhere.
\end{definition}
Intuitively,  Definition~\ref{D:INA} requires that the process $\lambda R$ be free to move in both directions (up and down) or else be constant at any moment in time for any $\lambda\in\R^d$. At predictable times $\tau$ when $R$ jumps with non-zero probability, the above condition is tantamount to the classical one-period no-arbitrage requirement
\begin{equation}\label{eq:INA_tau}
\P[\lambda \Delta R_\tau \geq 0|\F_{\tau-}]=1 \,\,\Longrightarrow \,\, \P[\lambda \Delta R_\tau=0|\F_{\tau-}]=1. 
\end{equation}

\subsection{Local expected utility}Fixing a local utility function $g$, by \emph{local expected utility} we understand the  function $\gfrak:\R^d\to [-\infty,\infty]$ given by
\begin{equation}\label{eq:locutil}
 \gfrak(\lambda) =g'(0) b^{ \tr\circ R}\lambda + \frac{1}{2}g''(0)\lambda c^{ R}\lambda^\top
+\int_{\R^d} \bigs(g(\lambda x)-g'(0)\lambda \tr(x)\bigs)F^{ R}(\d x)
\end{equation}
with the convention that $\gfrak(\lambda)=-\infty$ whenever $\int_{\R^d} g(\lambda x)\indicator{g(\lambda x)<-1}F^{ R}(\d x)=-\infty$, 
even if at the same time $\int g(\lambda x)\indicator{g(\lambda x)>1}F^{ R}(\d x)=\infty$; cf.\ \citet[Section~14.F]{rockafellar.wets.98} for an analogous integral convention in the context of convex rather than concave functions. Whenever $\gfrak(\lambda)\in\R$, the function $g(\lambda\id)$ is
$\sigma$-locally $\sigma$-special relative to $R$ and
$b^{g(\lambda\id),R}=\gfrak(\lambda)$.

Concavity of $g$ yields that $\gfrak$ is concave but not necessarily proper because it may take the value of $+\infty$ if $g$ is unbounded.
We let
\begin{align*}
  \edom\gfrak_t ={}& \{\lambda\in\R^d:\gfrak_t(\lambda)>-\infty\},\\
\dom\gfrak_t = {}& \{\lambda\in\R^d:\gfrak_t(\lambda)\in\R\}.  
\end{align*}
The set $\edom\gfrak_t$ denotes the \emph{effective domain} of the concave function $\gfrak_t$; see \citet[Section~4]{rockafellar.70}.
By \citet[Proposition~1.15]{kallenberg.17}, $\gfrak$ is a predictable function in the sense of \citet[II.1.4]{js.03}. The set of contingencies where $\gfrak$ does not take value $+\infty$ for any $\lambda\in\R^d$ is given by 
\begin{equation*}
 \D=\{(\omega,t):\gfrak_t(\lambda)<\infty \text{ for all }\lambda\in \R^d\}.   
\end{equation*} 
On $\Omega\times[0,T]$, one has $\D=\Omega\times[0,T]$ if $g$ is bounded above. By Lemma~\ref{L:260826London}, 
$\D$ admits a predictable version, which we fix henceforth.

\section{Existence and characterisation of predictable local optimizers}\label{S:3}

Hereafter, $g$ denotes a local utility function, which, being concave and suitably differentiable, entails $g''(0)\leq0$. Observe, however, that with $g''(0)=0$, one cannot hope \eqref{eq:locutil} to have a maximizer over $\lambda$ in general because with $R$ continuous, the local expected utility is linear, i.e., $\gfrak(\lambda) = g'(0)\lambda b^R$, $\lambda\in\R^d$.

\begin{proposition}\label{P}
The following statements hold.
\begin{enumerate}[(1)]
\item\label{P.1} If $\indicator{\D}\sint R$ is instantaneously arbitrage-free, $g''(0)<0$ on $\D$, and the Inada condition~\eqref{eq:Inada} holds on $\D$, then there exists a predictable process that maximizes $\gfrak$. 
\item\label{P.2}  If $\gfrak<\infty$, then  a predictable $\lopt$ maximizes $\gfrak$ if and only if  $\gfrak(\lopt)\in\R$ and
\[\xi^\lambda = g'_+(\lopt\id)((\lambda-\lopt)\id)^+-g'_-(\lopt\id)((\lambda-\lopt)\id)^-\]
is $\sigma$-locally $\sigma$-special relative to $R$ for all $\lambda\in\edom\gfrak$, and  
\begin{align}  \label{eq:260507}
-\infty<b^{\xi^\lambda, R}\leq 0,\quad \lambda\in\edom\gfrak.
\end{align}
 In such case (with  $\lambda=0$), the predictable function
$(g'_-\id^+-g'_+\id^- )(\lopt\id)$ is $\sigma$-locally $\sigma$-special relative to $R$ with 
\begin{align} \label{eq:260508}
0 \leq b^{(g'_-\id^+-g'_+\id^- )(\lopt\id),R}< \infty.
\end{align} 
\item\label{P.3} Suppose $g$ is strictly concave and satisfies $g''(0) < 0$ and $\gfrak<\infty$. If 
$\lopt,\widetilde\lambda$ are two predictable local optimizers, then $\lopt-\widetilde\lambda\in L(R)$ and 
$
    (\lopt-\widetilde\lambda)\sint R=0
$;
that is, any two predictable local optimizers differ by a null
strategy.
\end{enumerate}
\end{proposition}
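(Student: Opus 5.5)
We treat the three parts separately, working pointwise in $(\omega,t)$ and then using the measurable selection results of Appendix~\ref{Appx:A} for predictability.

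\emph{Part~\ref{P.1}.} Fix $(\omega,t)\in\D$, so $\gfrak$ is concave and proper: $\gfrak(0)=0$ and $\gfrak<\infty$. We show it is upper semicontinuous (via Fatou, using $g(\lambda x)\le g'(0)\lambda x$ to dominate the large-jump part) and coercive, i.e.\ $\gfrak(s\mu)\to-\infty$ as $s\to\infty$ for every direction $\mu$ outside the null space $N=\{\mu:\mu c^R=0,\ \mu x=0\ F^R\text{-a.e.},\ \mu b^{R[0]}=0\}$.

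The recession function of $\gfrak$ is computed by splitting according to the INA alternatives.
\begin{itemize}
\item If $\mu c^R\ne0$, the quadratic term $\frac12 g''(0)s^2\mu c^R\mu^\top$ with $g''(0)<0$ dominates.
\item If $F^R(\mu x<0)>0$, then $g(s\mu x)/s\to g'(-\infty)=\infty$ on a set of positive mass. This gives $\gfrak(s\mu)/s\to-\infty$; a monotone convergence argument using concavity handles the other terms.
\item Otherwise INA gives $F^R(\mu x>0)>0$ with $\mu b^{R[0]}<0$, or $\mu\in N$. In the former case $g(s\mu x)/s\to g'(\infty)=0$, so $\gfrak(s\mu)/s\to g'(0)\mu b^{R[0]}<0$.
\end{itemize}
Hence the recession cone of $\gfrak$ reduces to the lineality space $N$. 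Restricting to $N^\perp$ and applying Rockafellar, Theorem~27.1, gives a maximizer.

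For predictability we use Appendix~\ref{Appx:A}: the argmax of a normal concave integrand is a closed-valued measurable multifunction, so it admits a predictable selection. Taking the selection in $N^\perp$ keeps it unique up to $N$. Off $\D$ we set $\lopt=0$.

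\emph{Part~\ref{P.2}.} Since $\gfrak<\infty$ and $\gfrak$ is concave, $\lopt$ maximizes $\gfrak$ iff $\gfrak(\lopt)\in\R$ and the one-sided directional derivative satisfies $\gfrak'(\lopt;\lambda-\lopt)\le0$ for all $\lambda\in\edom\gfrak$.

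The key step is to differentiate under the integral. The difference quotients
\[
\frac{g((\lopt+h(\lambda-\lopt))x)-g(\lopt x)}{h}
\]
are monotone in $h\downarrow0$ by concavity and converge to $\xi^\lambda(x)$. They are bounded above by the $h=1$ quotient, which is integrable because $\lambda\in\edom\gfrak$ and $\gfrak(\lopt)\in\R$. Monotone convergence therefore gives $\gfrak'(\lopt;\lambda-\lopt)=b^{\xi^\lambda,R}$, with the continuous parts handled by Peano differentiability at $0$ (Appendix~\ref{Appx:B}). This value could a priori be $-\infty$.

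For $\lambda$ in the relative interior the derivative is finite. To get $b^{\xi^\lambda,R}>-\infty$ for every $\lambda\in\edom\gfrak$, we use that the quotient is decreasing in $h$, so the limit is at least the $h=1$ value $\gfrak(\lambda)-\gfrak(\lopt)>-\infty$. The same bound gives the $\sigma$-special property, because the negative part is controlled by an integrable function and the positive part by the first-order condition.

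Setting $\lambda=0$ gives $\xi^0=g'_+(\lopt\id)(-\lopt\id)^+-g'_-(\lopt\id)(-\lopt\id)^-=-(g'_-\id^+-g'_+\id^-)(\lopt\id)$, and \eqref{eq:260508} follows.

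\emph{Part~\ref{P.3}.} Both optimizers give the same finite maximal value. Strict concavity of $g$ then forces $\lopt x=\widetilde\lambda x$ for $F^R$-a.e.\ $x$; otherwise the midpoint would give strictly larger value. Because $g''(0)<0$, the midpoint argument also forces $(\lopt-\widetilde\lambda)c^R=0$. The drift term then must agree, so $(\lopt-\widetilde\lambda)b^{R[0]}=0$. Thus $\lopt-\widetilde\lambda$ has vanishing characteristics against $R$, which gives $\lopt-\widetilde\lambda\in L(R)$ with $(\lopt-\widetilde\lambda)\sint R=0$ by Jacod--Shiryaev III.6.

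\emph{Main obstacle.} The delicate step is Part~\ref{P.1}: making the recession argument work uniformly enough to prove upper semicontinuity when $g$ is unbounded (we work on $\D$ only) and $F^R$ is infinite. We plan to split the integral at $\abs{x}\le1$, use the Taylor bound from Peano differentiability there, and use Fatou's lemma with the linear upper bound elsewhere.
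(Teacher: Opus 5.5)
Your architecture matches the paper's, and parts \ref{P.2} and \ref{P.3} are fine. Your three recession cases in \ref{P.1} are those of Lemma~\ref{L:260830London}\ref{London.2} (via Lemma~\ref{L:260422Granada}). Part \ref{P.2} is the monotone-convergence computation of Lemma~\ref{L:FOC Levy}, and \ref{P.3} is the midpoint argument of Lemma~\ref{L:260830London}\ref{London.3} followed by Proposition~\ref{P:int}.

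The genuine gap is upper semicontinuity of $\gfrak$ on $\D$. You flag it as the main obstacle, but the majorant you propose does not work. The reverse Fatou lemma on $\{\abs{x}>1\}$ needs an $F^R$-integrable function dominating $g(\lambda_n x)^+$ uniformly along $\lambda_n\to\lambda$. The linear bound $g'(0)\lambda_n x\le g'(0)\sup_n\abs{\lambda_n}\abs{x}$ is integrable there only if the big jumps have a first moment, which is not assumed ($R$ need not be special). For instance, take $g=\log(1+\id)$ and $F^R(\d x)=\indicator{x>1}x^{-2}\d x+\indicator{-1/2<x<0}\d x$. Then $\gfrak<\infty$ and $\edom\gfrak=[0,2]$, yet $\int_{(1,\infty)} x\,F^R(\d x)=\infty$. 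For $g$ bounded above the constant $\sup g$ would serve, but the Inada condition allows unbounded $g$.

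The missing idea is that the domination must come from $\gfrak<\infty$ itself, as in Lemma~\ref{L:260428Granada}:
\begin{enumerate}[(a)]
\item Choose linearly independent $\lambda_1,\dots,\lambda_n\in\dom\gfrak$ spanning the cone generated by $\dom\gfrak$.
\item Finiteness of $\gfrak(\lambda_i)$, together with $g(-\id)^-\ge g(\id)^+$, gives $\int g(\abs{\lambda_i x})\indicator{\abs{x}>1}F^R(\d x)<\infty$.
\item Monotonicity and subadditivity of $g^+$ then give $g(\lambda\id)^+\le\max(\delta,1)\sum_i g(\abs{\lambda_i\id})$ for all $\lambda$ in a bounded subset of $\dom\gfrak$.
\end{enumerate}
On $\{\abs{x}\le1\}$ the relevant fact is simply $g(\lambda x)-g'(0)\lambda x\le0$, which lets plain Fatou apply. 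A Taylor bound from Peano differentiability at $0$ only controls small $\lambda x$, so it is not what is needed there.

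Two smaller points. First, off $\D$ you set $\lopt=0$, but there $\sup\gfrak=\infty>0=\gfrak(0)$, so $0$ is not a maximizer. You must select measurably from $\{\lambda:\gfrak(\lambda)=\infty\}$. Lemma~\ref{L:260826London} does this together with the selection on $\D$, via Aumann's theorem on the $\nu$-completion of the predictable $\sigma$-field followed by a predictable version. Your normal-integrand argmax route needs the same completion, since a jointly measurable integrand that is usc in $\lambda$ is normal only over a complete $\sigma$-field.

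Second, in \ref{P.2} the difference quotients increase as $h\downarrow0$. So $g(\lambda\id)-g(\lopt\id)-g'(0)(\lambda-\lopt)\tr$ is an integrable lower bound, not an upper bound, and the limit lies a priori in $(-\infty,\infty]$, not possibly at $-\infty$. Your later sentence states this correctly, and it is exactly what monotone convergence requires.
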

\begin{proof}\ref{P.1}\
Lemma~\ref{L:260830London}\ref{London.2} in conjunction with Lemma~\ref{L:260826London} yield the statement. 

\ref{P.2}\ Lemma~\ref{L:FOC Levy} applied pointwise yields the statement.

\ref{P.3}\ Assuming $g$ is strictly concave,  Lemma~\ref{L:260830London}\ref{London.1} yields that $\lopt-\widetilde\lambda$ takes values in 
\begin{align*}
\mathcal{N} = \left\{
\lambda\in\R^d:
\lambda b^{\tr\circ R}
-
\int_{\R^d}\lambda \tr(x)F^R(\d x)=0,\quad
\lambda c^R=0,\quad
\int_{\R^d}\indicator{\lambda x\ne0}F^R(\d x)=0
\right\}.
\end{align*}
We claim that any predictable $\mathcal{N}$-valued $\zeta$ belongs to $L(R)$. Indeed, for such $\zeta$ one has 
\begin{align*}
  b^{(\zeta\id)^2\wedge1,R} ={}& \zeta c^R \zeta^\top + \int_{\R^d}((\zeta x)^2\wedge 1)F^R(\d x) = 0,\\
b^{\tr(\zeta\id),R} ={}& \zeta b^{\tr\circ R} +   \int_{\R^d}(\tr(\zeta x)-\zeta\tr(x))F^R(\d x) = 0,
\end{align*}
hence $\zeta\in L(R)$ by Proposition~\ref{P:int}. Since $\zeta\sint R$ has no jumps, no continuous quadratic variation, and zero drift, it is a null process. 
\end{proof}
The next proposition and example illustrate the importance of the Inada condition~\eqref{eq:Inada} in  Proposition~\ref{P}. Observe that for time-constant deterministic $g$, the requirement $g'(0)>0$ is weaker than 
$g''(0)<0$.

\begin{proposition}
    Suppose 
    $g$ is time-constant deterministic 
    with $g'(0)>0$. If $g$ does not satisfy the Inada condition~\eqref{eq:Inada}, then there exists an arbitrage-free special $\R$-valued L\'evy process $R$ with finite L\'evy measure such that $\gfrak<\infty$ and $\gfrak$ does not attain its supremum.
\end{proposition}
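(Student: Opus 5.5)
The plan is to build $R$ explicitly as $R_t=\beta t+J_t$, where $J$ is a compound Poisson process whose finite L\'evy measure $F$ has a smooth, strictly positive density on a bounded set. Because $c^R=0$, the value of $g''(0)$ plays no role. Because $F$ is finite with bounded support, $R$ is special and $\gfrak(\lambda)<\infty$ for every $\lambda$. Writing $\psi=\gfrak$ (a deterministic concave function of one variable) and $m$ for the drift of $R$ net of jump compensation, we have
\[
\psi(\lambda)=g'(0)\lambda m+\int_{\R}g(\lambda x)\,F(\d x),
\qquad
\psi'_\pm(\lambda)=g'(0)m+\int_{\R} x\,g'_{\pm}(\lambda x)\,F(\d x),
\]
where the derivative formula follows by monotone convergence of difference quotients of the concave $g$. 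Put $a=g'(\infty)\ge0$ and $B=g'(-\infty)\in(0,\infty]$. Failure of \eqref{eq:Inada} means $a>0$ or $B<\infty$. The mechanism is always the same: tune $m$ so that $\lim_{\lambda\to\infty}\psi'(\lambda)=0$ while $\psi'>0$ on $[0,\infty)$ and $\psi'>0$ on $(-\infty,0]$ as well. Then $\psi$ is strictly increasing and has no maximizer; its supremum, finite or $+\infty$, is not attained.

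\emph{Case $B<\infty$.} Here $\dom g=\R$ and $g'$ is bounded between $a$ and $B$. Take $F$ with strictly positive density on $[-1,1]\setminus\{0\}$. Both jump signs occur, so INA \eqref{eq:INA} holds trivially. By dominated convergence,
\[
\lim_{\lambda\to\infty}\psi'(\lambda)=g'(0)m+a\int_{x>0}x\,F(\d x)-B\int_{x<0}\abs{x}\,F(\d x).
\]
Since $g'(0)>0$, we can choose $m$ to make this limit $0$. Subtracting the limit gives
\[
\psi'(\lambda)=\int_{x>0}x\bigs(g'(\lambda x)-a\bigs)F(\d x)+\int_{x<0}\abs{x}\bigs(B-g'(\lambda x)\bigs)F(\d x)\ge0 .
\]
This is strictly positive as soon as $g'$ is not constant on $(0,\infty)$ or not constant on $(-\infty,0)$. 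The full support of $F$ near $0$ is exactly what yields strictness, and it is what rules out piecewise-linear $g$ with kinks placed far from the atoms. For $\lambda<0$, the analogous computation shows $\psi'(\lambda)\ge\psi'(0^+)>0$ by concavity. If $g$ is linear on all of $\R$, then $\psi(\lambda)=g'(0)\lambda b^R$, and any $b^R\neq0$ works.

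\emph{Case $B=\infty$, $a>0$.} Negative jumps would force $\psi'\to-\infty$ as $\lambda\to\infty$, so instead take $F$ supported on $(0,1]$ with positive density, and take a negative drift $m<0$. INA \eqref{eq:INA} still holds: for $\lambda>0$ there are up-jumps and the jump-free drift is negative, while for $\lambda<0$ there are down-jumps. Now $\psi'(\lambda)\to g'(0)m+a\int xF(\d x)$ as $\lambda\to\infty$, so set $m=-a\int xF(\d x)/g'(0)$. Then
\[
\psi'(\lambda)=\int x\bigs(g'(\lambda x)-a\bigs)F(\d x)>0
\]
unless $g'\equiv a$ on $(0,\infty)$. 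For $\lambda<0$, concavity gives $\psi'\ge\psi'(0)>0$.

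The main obstacle is the degenerate subcase $g=a\,\id$ on $[0,\infty)$ together with $B=\infty$; this forces $g'(0)=a$ and $g''(0)=0$. Every finite-activity construction above then produces a $\psi$ that is flat on one side or attains its maximum. In that subcase I would first try an infinite-activity L\'evy measure (say with density $\abs{x}^{-2}$ near $0$ on the negative side, and a positive drift). The hope is to make $\int_{x<0}\abs{x}(g'(\lambda x)-a)F(\d x)$ decay to $0$ as $\lambda\to\infty$, using the Peano condition $g(y)-ay=o(y^2)$ at $0^-$. If this fails, I would examine whether the hypothesis must be read as excluding this configuration.
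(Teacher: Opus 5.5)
\textbf{Gap: the degenerate subcase is left open.} Your two nondegenerate cases work, but the proof is incomplete. In the subcase $g'(-\infty)=\infty$ with $g=a\,\id$ on $[0,\infty)$ you give no argument. The hypotheses allow this subcase, e.g.\ $g=\id-(\id^-)^3$. Your suggested fallback, an infinite-activity L\'evy measure, is excluded by the statement, which requires a finite L\'evy measure.

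The difficulty is not intrinsic to finite activity; it is self-inflicted. By tuning $m$ so that $\lim_{\lambda\to\infty}\psi'(\lambda)=0$ exactly, you commit yourself to proving strict positivity of $\psi'$. That is impossible when $g$ is linear on the relevant half-line. The statement does not ask for a finite supremum. Since $\gfrak<\infty$ pointwise, it suffices to make $\gfrak$ unbounded above. In the degenerate subcase, keep $F$ on $(0,1]$ and pick any $m\in\bigl(-\int xF(\d x),0\bigr)$. Then $m<0$ preserves INA, and $\psi(\lambda)=a\lambda\bigl(m+\int xF(\d x)\bigr)\to\infty$ as $\lambda\to\infty$.

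\textbf{How the paper does it.} The paper uses this idea in a single construction covering all cases. After normalising $g'(0)=1$, it takes $c^R=0$, $F^R(\d x)=(\indicator{0<x\leq1}+g'(-\infty)^{-2}\indicator{-1\leq x<0})\d x$ and $b^{R[0]}=1/g'(-\infty)-g'(\infty)/4$. Lemma~\ref{L:260422Granada} then gives $\lim_{y\uparrow\infty}\gfrak(y)/y=g'(\infty)/4+1/(2g'(-\infty))$. This is strictly positive exactly because Inada fails, so $\gfrak$ is finite but unbounded, and no case split or strictness argument is needed.

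Your zero-slope route, where it works, gives a finer conclusion: a strictly increasing $\gfrak$, possibly with finite supremum. That is more than the statement asks, and it is precisely what breaks in the degenerate case.

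\textbf{Minor omission.} You only verify INA. The paper also exhibits an equivalent martingale measure, by reweighting the positive and negative parts of $F^R$. With a finite L\'evy measure this is straightforward in your construction too, but it should be included.
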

\begin{proof}
Consider a time-constant deterministic $g$ with $g'(0)>0$ such that the Inada condition~\eqref{eq:Inada} does not hold. Without loss of generality, we may assume $g'(0)=1$. We now construct a one-dimensional arbitrage-free L\'evy process $R$ with finite L\'evy measure 
 to witness the assertion. To this end, take $d=1$ (a single risky asset),  $c^R=0$,
$$ F^R(\d x) = \left(\indicator{0<x\leq 1} + \frac{1}{(g'(-\infty))^2}\indicator{-1\leq x <0}\right)\d x, $$
and 
$$b^{R[0]} =  \frac{1}{g'(-\infty)} - \frac{g'(\infty)}{4}$$
relative to the activity process $A_t=t$. This yields

\[
b^R = b^{R[0]}+\int_\R xF^R(\d x)
= \frac{1}{g'(-\infty)}\left(1-\frac{1}{2}\frac{1}{g'(-\infty)}\right) + \frac{1}{4}\left(2-g'(\infty)\right)>0
\]
is finite, hence 
\begin{equation}\label{eq:260502Granada}
\gfrak(\lambda) = b^R\lambda + \int_\R(g(\lambda x)-\lambda x)F^R(\d x)<\infty,\qquad \lambda\in\R,  
\end{equation}
in view of $g-\id\leq0$.

For $g'(-\infty)<\infty$, $F^R$ charges both positive and negative values, hence INA holds. 
Furthermore,  $R$ admits an equivalent martingale measure. Indeed, there exist $a,b>0$ such that
\[
b^{R[0]}
+
a\int_{\R}x\mathbf 1_{x\in(0,1]}F^R(\d x)
+
b\int_{\R}x\mathbf 1_{x\in[-1,0)}F^R(\d x)
=0,
\]
since the first integral is strictly positive and either the second is
strictly negative or $b^{R[0]}<0$. Hence
\[
F^{R,\Q}(\d x)
=
\left(
a\indicator{x>0}
+
b\indicator{x<0}
\right)F^R(\d x)
\]
defines an equivalent change of measure under which $R$ is a martingale.

Observe that $b^R>0$ and \eqref{eq:260502Granada} give $\gfrak(\lambda)\leq 0 $ for $\lambda\leq 0$. For $\lambda=1$, Lemma~\ref{L:260422Granada} yields 
\begin{align*}
\lim_{y\uparrow\infty}\frac{\gfrak(y)}{y} ={}& 
b^{R[0]}+g'(\infty)\int_\R x\indicator{x>0}F^R(\d x) + g'(-\infty)\int_\R x\indicator{x<0}F^R(\d x)\\ ={}& \frac{g'(\infty)}{4} + \frac{1}{2g'(-\infty)} >0,  
\end{align*} 
hence $\gfrak$ is unbounded on $[0,\infty)$ and 
its supremum is therefore not attained.
\end{proof}

\begin{example}
With \citet{kallsen.99} let 
\begin{equation*}
 g = 1+\id - \sqrt{1+\id^2}.
\end{equation*}
One easily verifies that $g$ is a local utility function and that, furthermore, it is strictly increasing, strictly concave with $g''(0)=-1<0$, and bounded above. However, $g$ does not satisfy the Inada condition~\eqref{eq:Inada} as $g'(-\infty)=2<\infty$. 

Let $R$ be an $\R$-valued L\'evy process with $b^{R[0]}=1$, $c^R=0$, and $F^R(\d x) = \indicator{|x|\leq 1}\d x$ relative to the activity process $A_t=t$. We then have 
$$\gfrak(\lambda) = \lambda + \int_{-1}^1\left(1+\lambda x-\sqrt{1+\lambda^2x^2}\right)\d x,\qquad \lambda\in\R, $$
which after simplifications yields $\gfrak(0)=0$ and
$$\gfrak(\lambda) = 2+\lambda - 2\lambda^{-1}\int_0^\lambda\sqrt{1+y^2}\mkern2mu\d y 
= 2 - \lambda^{-1} \log\left(\lambda+\sqrt{1+\lambda^2}\right)+\lambda-\sqrt{1+\lambda^2}, \qquad \lambda \neq 0.$$
Therefore, $\sup_{\lambda\in\R}\gfrak(\lambda)=2$ is not attained. For $b^{R[0]}>1$, the supremum even fails to be finite.
\end{example}

\section{Integrability of the local optimizer}\label{S:4}

We remind the reader that $g$ denotes a local utility function, $R$ a semimartingale, and $\gfrak$ the resulting local expected utility. We wish to determine circumstances under which the local optimizer $\lopt$, whose existence we established in Proposition~\ref{P}, integrates $R$. Denote by $(\gfrak^\rt)_{\rt\in\R}$ the family of predictable local expected utility mappings generated by the local utility functions in Example~\ref{ex:LRT} via \eqref{eq:locutil}. 
The following is known to be true.  
\begin{theorem}[{\cite[Theorem~2.1]{crs.25.aheadofprint}}]\label{T:CRS}
Suppose $R$ is instantaneously arbitrage-free and has independent increments.
For $\rt=-1$, i.e., for the local utility function 
\begin{equation}\label{eq:gMMV2}
  \id\wedge1 - \frac{1}{2}(\id\wedge1)^2,  
\end{equation}
$\gfrak^\rt$ admits a predictable local optimizer, say $\lLRT$, and the following are equivalent. 
\begin{enumerate}[(i)]
\item\label{T2.i} The cumulative maximal local expected utility is finite, i.e., 
$\gfrak^{\rt}(\lLRT) \sint A_T<\infty$.
\item\label{T2.ii} The strategy $\lLRT$ integrates $R$.
\end{enumerate}
\end{theorem}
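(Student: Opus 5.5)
Since this theorem is quoted from \cite[Theorem~2.1]{crs.25.aheadofprint}, the plan is to rederive it from the tools of Sections~\ref{S:2}--\ref{S:3} rather than rely on the citation. First, existence. The function \eqref{eq:gMMV2} is bounded above by $\frac12$, so $\D=\Omega\times[0,T]$. It has $g''(0)=-1<0$. It satisfies the Inada condition~\eqref{eq:Inada}, because $g'=1-x\to\infty$ as $x\downarrow-\infty$ and $g'=0$ on $[1,\infty)$. Proposition~\ref{P}\ref{P.1} therefore supplies a predictable maximizer $\lLRT$ of $\gfrak^\rt$. Independence of increments lets us choose the triplet deterministic, so $\lLRT$ may be taken deterministic as well; this is useful but not essential.

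For (ii)$\Rightarrow$(i), take $\lLRT\in L(R)$. The process $g\circ(\lLRT\sint R)$ is then well defined, and since $g\le \frac12$ every jump term $g(\lLRT\Delta R)-\lLRT\Delta R$ is bounded above by a multiple of $(\lLRT\Delta R)^2\wedge 1$ (split according to $|x|\le1$ or not). Hence the positive part of $\gfrak^\rt(\lLRT)$ is dominated by
\[
|\lLRT b^{\tr\circ R}|+\tfrac12\lLRT c^R\lLRT^\top+\int\bigl((\lLRT x)^2\wedge1\bigr)F^R(\d x)+\int\bigl|\tr(\lLRT x)-\lLRT\tr(x)\bigr|F^R(\d x),
\]
whose integral against $A$ is finite by the integrability characterisation in Proposition~\ref{P:int}. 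Optimality together with $\gfrak(0)=0$ gives $\gfrak^\rt(\lLRT)\ge0$, so this bound already yields $\gfrak^\rt(\lLRT)\sint A_T<\infty$.

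The main obstacle is (i)$\Rightarrow$(ii). By Proposition~\ref{P:int}, it suffices to bound
\[
\lLRT c^R\lLRT^\top+\int\bigl((\lLRT x)^2\wedge1\bigr)F^R(\d x)\quad\text{and}\quad\bigl|b^{\tr(\lLRT\id),R}\bigr|
\]
by constant multiples of $\gfrak^\rt(\lLRT)$, after which (i) finishes the argument. The first ingredient is the first-order condition~\eqref{eq:260508} with $\lambda=0$. For this $g$, $g'(y)y=(y-y^2)$ on $y\le1$ and $0$ on $y>1$, so \eqref{eq:260508} gives $b^{g'(\lLRT\id)\lLRT\id,R}\ge0$. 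The second ingredient is the identity $g(y)-\tfrac12 g'(y)y=\tfrac12\bigl(y\wedge1\bigr)$ for $y\le 1$, which equals $\tfrac12$ for $y>1$. Subtracting half of the drift in \eqref{eq:260508} from $\gfrak^\rt(\lLRT)$ shows that the quadratic terms $\tfrac12\lLRT c\lLRT^\top$, together with the jump contributions $\int (\lLRT x)^2\indicator{\lLRT x\le 1}F^R(\d x)$ and $\int\indicator{\lLRT x>1}F^R(\d x)$, are all controlled by a constant times $\gfrak^\rt(\lLRT)$. This is precisely the reasonable-asymptotic-elasticity mechanism: $yg'(y)/g(y)$ is bounded away from $1$ at $\pm\infty$. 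Once the square-function part is controlled, the drift part $b^{\tr(\lLRT\id),R}$ is obtained by expressing it through $\gfrak^\rt(\lLRT)$ and the quadratic bound just established.

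The delicate point is the sign management at large negative jumps, where $g$ behaves like $-\tfrac12 y^2$. I would handle it by testing \eqref{eq:260507} with $\lambda=0$, equivalently $\lambda=2\lLRT$, to make sure the drift part is bounded from both sides. Independence of increments is used only to guarantee that $\lLRT$ can be chosen deterministic, which simplifies the measurable-selection issues.
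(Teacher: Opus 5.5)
Your overall route is the paper's: existence from Proposition~\ref{P}\ref{P.1}, (ii)$\Rightarrow$(i) as in Corollary~\ref{C}, and (i)$\Rightarrow$(ii) as in Theorem~\ref{T}. The last step combines $\gfrak^\rt(\lLRT)=b^{g(\lLRT\id),R}$ with the first-order condition $b^{k(\lLRT\id),R}\ge0$ from \eqref{eq:260508}, where $k=g'\id$. However, the key step of (i)$\Rightarrow$(ii) fails as written.

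Your identity $g-\frac12k=\frac12(\id\wedge1)$ is correct but purely first order. The drift $b^{(\id\wedge1)(\lLRT\id),R}$ contains no $c^R$-term and no quadratic jump term. So subtracting half of $b^{k(\lLRT\id),R}$ only yields the one-sided drift bound $b^{(\id\wedge1)(\lLRT\id),R}\le2\gfrak^\rt(\lLRT)$. The quantities you list are, up to a factor $2$ in the jump terms, the components of $b^{(g-k)(\lLRT\id),R}$. The identity you need is
\[
g-k=\tfrac12(\id\wedge1)^2\ge\tfrac12(\id^2\wedge1).
\]
Subtracting the \emph{whole} of $b^{k(\lLRT\id),R}\ge0$ then gives $b^{(g-k)(\lLRT\id),R}\le\gfrak^\rt(\lLRT)$, which is \eqref{eq:260519d}--\eqref{eq:midQV}. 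Your identity combined with $g=\id\wedge1-\frac12(\id\wedge1)^2$ recovers this, but that step is absent.

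The drift part then follows from two facts: $\tr=g+\frac12\id^2$ on $\{\abs{\id}\le1\}$, and $\abs{g}\le3(g-k)$ on $\{\abs{\id}>1\}$. Together they give $\abs{b^{\tr(\lLRT\id),R}}\le5\gfrak^\rt(\lLRT)$. The second fact is the left-tail RAE ($k/g\to2$ at $-\infty$). Since $g-k$ carries the untruncated $\frac12\id^2$ on $\{\id<-1\}$, it disposes of your ``delicate point'' with no extra test direction.

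Note also that \eqref{eq:260507} with $\lambda=2\lLRT$ is not equivalent to $\lambda=0$. Since $g$ is $C^1$ and $2\lLRT\in\edom\gfrak^\rt$ (because $g^-$ grows quadratically and $\gfrak^\rt(\lLRT)\in\R$), it gives the reverse inequality $b^{k(\lLRT\id),R}\le0$. Hence in fact $b^{k(\lLRT\id),R}=0$. With this your half-identity does become useful: $\gfrak^\rt(\lLRT)=\frac12b^{(\id\wedge1)(\lLRT\id),R}$ controls the drift. It cannot, however, replace the $g-k$ identity for the square function.

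In (ii)$\Rightarrow$(i) you bound $\abs{\lLRT b^{\tr\circ R}}$ and $\int\abs{\tr(\lLRT x)-\lLRT\tr(x)}F^R(\d x)$ separately. Proposition~\ref{P:int} only controls their sum $b^{\tr(\lLRT\id),R}$, and the two can cancel. Take a Poisson process ($c^R=0$, $F^R=\delta_1$, $b^{\tr\circ R}=1$, $A_t=t$) and $\lambda_t=(T-t)^{-1}$. Then $b^{\tr(\lambda\id),R}=\tr(\lambda)$ and $b^{(\lambda\id)^2\wedge1,R}\le1$ are bounded, so $\lambda\in L(R)$, yet $\lambda b^{\tr\circ R}\sint A_T=\infty$. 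So your dominating expression need not be $A$-integrable.

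Keep the combination instead. The relevant comparison is $g-\tr\le\frac12(\id^2\wedge1)$, not the vacuous $g\le\id$. It gives $\gfrak^\rt(\lLRT)\le b^{\tr(\lLRT\id),R}+\frac12b^{(\id^2\wedge1)(\lLRT\id),R}$, which is Corollary~\ref{C}. Finally, independent increments play no role anywhere in the argument; the paper obtains the statement without them.
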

The aim of the paper is to understand whether such tight equivalence holds in a broader class of utility functions and, in particular, for other values of $\rt\in\R$ with $\gLRT$ from Example~\ref{ex:LRT}. 

The following integrability criterion, previously used to establish Theorem~\ref{T:CRS}, will be instrumental below. It  is a slight modification of \citet*[Theorem~III.6.30]{js.03} reformulated in terms of the drift rates introduced in Definition~\ref{D:260612}, featuring variation functions $\id^2\wedge1$ and $\tr=\id\indicator{\abs{\id}\leq1}$ similar to those appearing in  the local utility function \eqref{eq:gMMV2}.
\begin{proposition}[{\cite[Corollary~H.1]{crs.25.aheadofprint}}]
\label{P:int}
Let $\zeta$ be an $\R^d$-valued predictable process. 
Then
\[
\zeta\in L(R)
\quad\Longleftrightarrow\quad
\left(
b^{(\zeta\id)^2\wedge1,R}
+
\abs{b^{\tr(\zeta\id),R}}
\right)\sint A_T<\infty.
\]
\end{proposition}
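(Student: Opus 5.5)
The plan is to reduce the statement to the classical criterion of \citet*[Theorem~III.6.30]{js.03}. I would first identify the two drift rates appearing here with the quantities used there, and then absorb the remaining differences: the choice of truncation function, and the bookkeeping of three separate conditions rather than one sum.

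\emph{Step 1: compute the two drift rates.} Fix $\zeta$ and apply Definition~\ref{D:260612} pointwise.
\begin{itemize}
\item[(a)] The function $\xi=(\zeta\id)^2\wedge1$ is bounded, hence compatible with $R$ and trivially $\sigma$-locally $\sigma$-special. It satisfies $D\xi(0)=0$ and $D^2\xi(0)=2\zeta^\top\zeta$. Therefore
\[
b^{(\zeta\id)^2\wedge1,R}=\zeta c^R\zeta^\top+\int_{\R^d}((\zeta x)^2\wedge1)F^R(\d x)\in[0,\infty].
\]
\item[(b)] The function $\tr(\zeta\id)$ is bounded, with $D\tr(\zeta\id)(0)=\zeta$ and vanishing second derivative. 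Therefore
\[
b^{\tr(\zeta\id),R}=\zeta b^{\tr\circ R}+\int_{\R^d}\bigl(\tr(\zeta x)-\zeta\tr(x)\bigr)F^R(\d x),
\]
whenever this integral converges.
\end{itemize}
The integrand in (b) vanishes on $\{\abs{\zeta x}\le1,\ \max_i\abs{x_i}\le1\}$ and is bounded by $1+\abs{\zeta}\sqrt d$ everywhere. Its complement has $F^R$-measure at most
\[
\int((\zeta x)^2\wedge1)\,F^R(\d x)+F^R\bigl(\max_i\abs{x_i}>1\bigr)<\infty
\]
wherever the first drift is finite. So the second drift is well defined exactly where the first one is finite. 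Off that set both sides of the claimed equivalence fail, which handles the convention issue.

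\emph{Step 2: match with Jacod--Shiryaev.} Theorem~III.6.30 in \citet{js.03} states that $\zeta\in L(R)$ if and only if the following three processes are a.s.\ finite at $T$:
\[
(\zeta c^R\zeta^\top)\sint A,\qquad \Bigl(\int((\zeta x)^2\wedge1)F^R(\d x)\Bigr)\sint A,\qquad \Bigl|\zeta b^{h\circ R}+\int\bigl(\bar h(\zeta x)-\zeta h(x)\bigr)F^R(\d x)\Bigr|\sint A.
\]
Here $h$ is the truncation on $\R^d$ and $\bar h$ a truncation on $\R$. Taking $h=\tr$ and $\bar h=\tr$ (the one-dimensional version), the first two processes add up to $b^{(\zeta\id)^2\wedge1,R}\sint A$ by (a), and the third is $\abs{b^{\tr(\zeta\id),R}}\sint A$ by (b). Since all terms are nonnegative, finiteness of the sum in the proposition is equivalent to finiteness of each of the three processes.

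\emph{Main obstacle: admissibility of the truncation.} One must check that the discontinuous, coordinatewise truncation $\tr$ is admissible in \citet{js.03}. If a continuous truncation is required there, I would pass from their $h',\bar h'$ to $\tr$ by a change-of-truncation argument:
\begin{itemize}
\item[(i)] $\zeta(b^{h'\circ R}-b^{\tr\circ R})=\int\zeta(h'-\tr)(x)F^R(\d x)$.
\item[(ii)] $\bar h'(\zeta x)-\tr(\zeta x)$ is bounded and vanishes for $\abs{\zeta x}$ small, so it is dominated by a multiple of $(\zeta x)^2\wedge1$.
\item[(iii)] The terms involving $\zeta(h'-\tr)(x)$ cancel between drift and integral.
\end{itemize}
Hence the two versions of the third process differ by at most a constant multiple of $\bigl(\int((\zeta x)^2\wedge1)F^R(\d x)\bigr)\sint A$. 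Under finiteness of the second process, the third process is therefore finite for one truncation if and only if it is finite for the other, which completes the reduction.
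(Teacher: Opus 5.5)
Your proposal is correct and follows the route the paper indicates: the paper gives no proof of its own but quotes the statement from \cite[Corollary~H.1]{crs.25.aheadofprint} as a reformulation of \citet*[Theorem~III.6.30]{js.03} in terms of the drift rates of $(\zeta\id)^2\wedge1$ and $\tr(\zeta\id)$, which is exactly your Steps~1--2. Two small clarifications: Jacod--Shiryaev's definition of a truncation function requires compact support, not continuity, and the coordinatewise $\tr$ on $\R^d$ is not compactly supported when $d\geq2$, so this is the actual mismatch, and your change-of-truncation argument handles it verbatim; also, $b^{(\zeta\id)^2\wedge1,R}$ is always finite because $(\zeta x)^2\wedge1\leq(1\vee\abs{\zeta}^2)(\abs{x}^2\wedge1)$, so the convention discussion in your Step~1 is vacuous.
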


For $g$ bounded above, this immediately yields the reverse direction of the equivalence we seek. Here the boundedness from above is essential, as shown in Example~\ref{E2}. 
\begin{corollary}\label{C}
Suppose $g'(0)$ and $g(\infty)$ are locally bounded and let 
$\zeta\in L(R)$. Then
$$ (\gfrak(\zeta)\vee0)\sint A_T<\infty.$$
\end{corollary}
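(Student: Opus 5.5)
Our plan is to bound $\gfrak(\zeta)$ pointwise from above by a constant multiple of the two drift rates appearing in Proposition~\ref{P:int}. Since $\zeta\in L(R)$, that proposition gives
\[
\Bigl(b^{(\zeta\id)^2\wedge1,R}+\abs{b^{\tr(\zeta\id),R}}\Bigr)\sint A_T<\infty ,
\]
so such a bound finishes the proof. By local boundedness we choose a localising sequence of predictable stopping times $\tau_n$ with $g'(0)+g(\infty)\le K_n$ on $\lc0,\tau_n\rrbracket$. Finiteness of $(\gfrak(\zeta)\vee0)\sint A_T$ is stable under such localisation, because $\tau_n\uparrow\infty$ stationarily. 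We may therefore assume $g'(0)\le K$ and $g(\infty)\le K$ for a constant $K$.

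The key pointwise estimate is the inequality
\begin{equation*}
g(y)\le g'(0)\tr(y)+K'(y^2\wedge1),\qquad y\in\R,
\end{equation*}
with $K'$ depending only on $K$. To prove it we split into two cases. For $\abs{y}\le1$, concavity gives $g(y)\le g'(0)y=g'(0)\tr(y)$. For $y>1$, we have $g(y)\le g(\infty)\le K\le K(y^2\wedge1)$. For $y<-1$, we have $g(y)\le g(0)=0$, because $g$ is non-decreasing. So $K'=K$ works.

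Next we rewrite the drift. Put $y=\zeta x$ and use $g''(0)\le0$ to drop the continuous part. Then
\[
\gfrak(\zeta)\le g'(0)\zeta b^{\tr\circ R}+\int\bigl(g(\zeta x)-g'(0)\zeta\tr(x)\bigr)F^R(\d x).
\]
We insert the pointwise bound, which gives the integrand
\[
g(\zeta x)-g'(0)\zeta\tr(x)\le g'(0)\bigl(\tr(\zeta x)-\zeta\tr(x)\bigr)+K\bigl((\zeta x)^2\wedge1\bigr).
\]
Comparing with the definitions in Definition~\ref{D:260612}, we get
\[
\gfrak(\zeta)\le g'(0)\,b^{\tr(\zeta\id),R}+K\,b^{(\zeta\id)^2\wedge1,R}-\tfrac12 K\,\zeta c^R\zeta^\top+\tfrac12 g''(0)\zeta c^R\zeta^\top ,
\]
and both $c^R$ terms are nonpositive. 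Hence
\[
\gfrak(\zeta)\vee0\le K\Bigl(\abs{b^{\tr(\zeta\id),R}}+b^{(\zeta\id)^2\wedge1,R}\Bigr).
\]

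The main obstacle is integral bookkeeping, because $\gfrak(\zeta)$ may equal $-\infty$ or be a difference of non-integrable pieces. We need the drift rates of $\tr(\zeta\id)$ and $(\zeta\id)^2\wedge1$ to be well defined whenever $\zeta\in L(R)$; this holds because these functions are bounded, so they are trivially $\sigma$-locally $\sigma$-special. We also must justify splitting $\int(\ldots)\,F^R(\d x)$ into $\int(\tr(\zeta x)-\zeta\tr(x))\,F^R(\d x)$ plus the rest. The first integrand is $O((\zeta x)^2\wedge1)$ near $0$ and bounded in the tails, so it is integrable whenever $b^{(\zeta\id)^2\wedge1,R}<\infty$, which holds $(\P\times A)$-a.e. by Proposition~\ref{P:int}. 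With the upper bound in hand, the convention for $\gfrak$ only helps: if $\gfrak(\zeta)=-\infty$, then $\gfrak(\zeta)\vee0=0$.
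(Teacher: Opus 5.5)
Your proposal is correct and follows the paper's proof: the same pointwise bound $g\le g'(0)\tr+g(\infty)(\id^2\wedge1)$, a comparison of drift rates, and an appeal to Proposition~\ref{P:int}. Two cosmetic slips, neither affecting the argument: since $D^2\bigl((\zeta\id)^2\wedge1\bigr)(0)=2\zeta^\top\zeta$, the continuous-part correction is $-K\zeta c^R\zeta^\top$ rather than $-\tfrac12K\zeta c^R\zeta^\top$ (harmless, as the term is nonpositive and dropped); and $\tr(\zeta x)-\zeta\tr(x)$ is not $O((\zeta x)^2\wedge1)$ in general, but it is bounded and vanishes near $x=0$, so it is $F^R$-integrable regardless.
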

\begin{proof}
We have
$
g
\leq
g'(0)\tr
+
g(\infty)(\id^2\wedge1)$.
Hence
\[
\gfrak(\zeta)
= \lim_{H\uparrow\infty}
b^{(-H)\vee g(\zeta\id),R}
\leq
g'(0)b^{\tr(\zeta\id),R}
+
g(\infty)b^{(\id^2\wedge1)(\zeta\id),R},
\]
and therefore
\[
\gfrak(\zeta)\vee0
\leq
g'(0)\bigl|b^{\tr(\zeta\id),R}\bigr|
+
g(\infty)b^{(\id^2\wedge1)(\zeta\id),R}.
\]
The assertion now follows from Proposition~\ref{P:int}.
\end{proof}

 To prepare for the statement of the main theorem, we now introduce a lemma that describes local behaviour of $g$ and related quantities near zero. Its proof is found in Appendix~\ref{Appx:D}. These considerations become important when $g$ is not time constant and deterministic.   

 \begin{lemma}\label{L:T}
Let 
\begin{align} \label{eq:260924.1}
    k=g'_-\id^+-g'_+\id^-,
\end{align}
with the convention that
$g-k$ is equal to $\infty$ outside of $\dom g$. If $g''(0)<0$,
then there exist predictable processes $\alpha>0$,
$\beta>0$, and $\gamma\geq1$ such that
\begin{align}
&\ -2\alpha\in\dom g,\label{eq:alpha in dom}\\
\beta\id^2\indicator{\abs{\id}\leq\alpha}
 \leq{}(g-k)&\indicator{\abs{\id}\leq\alpha},
 \qquad
 \bigl(g'(0)\id-g\bigr)\indicator{\abs{\id}\leq\alpha}
 \leq \gamma(g-k)\indicator{\abs{\id}\leq\alpha}.
 \label{eq:260519}
\end{align}
 \end{lemma}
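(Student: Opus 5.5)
Fix $(\omega,t)$ and regard $g=g_t(\omega,\cdot)$ as a deterministic concave function. Predictability of $\alpha,\beta,\gamma$ will then come from making explicit choices that are measurable functions of predictable quantities, or from the measurable-selection results of Appendix~\ref{Appx:A}. Concretely, I will take $\alpha$ of the form $\sup\{2^{-n}: \text{a countable family of predictable conditions holds}\}$.

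\textbf{Local expansion.} Write $g'(0)=:a\geq 0$ and $g''(0)=:-2c<0$. By Proposition~\ref{P:rock}, $g$ is differentiable on a punctured neighbourhood of $0$, with $g''$ there tending to $-2c$. Hence there is $\delta>0$ with $[-2\delta,2\delta]\subset\operatorname{int}\dom g$, and on $0<\abs{x}\le\delta$ we have $-3c\le g''\le -c$ wherever it exists. The one-sided derivatives satisfy
\[
g'_\pm(x)=a+\int_0^x g''(y)\,\d y
\]
in the Lebesgue sense, since $g'$ is monotone and absolutely continuous away from $0$, and Peano differentiability at $0$ makes $g'$ continuous at $0$. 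I take $\alpha$ as the largest dyadic $2^{-n}\le\delta$ satisfying the rational-grid version of these bounds. This choice is predictable, and it gives \eqref{eq:alpha in dom}.

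\textbf{Estimate for $g-k$.} Note that $k(x)=xg'(x)$ (one-sided as appropriate), so
\[
g(x)-k(x)=\int_0^x\bigl(g'(y)-g'(x)\bigr)\,\d y=\int_0^x\!\!\int_y^x\bigl(-g''(z)\bigr)\,\d z\,\d y .
\]
With $-g''\in[c,3c]$ this lies in $[cx^2/2,\,3cx^2/2]$, so $\beta=c/2$ works. Similarly,
\[
g'(0)x-g(x)=\int_0^x\!\!\int_0^y\bigl(-g''\bigr)\le 3cx^2/2 .
\]
Hence $\gamma=3\vee1$ suffices, and the ratio bound is uniform. At $x=0$ both sides vanish.

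\textbf{Main obstacle.} The main difficulty is justifying that $-g''$ is bounded above and below near $0$ using only \emph{second Peano} differentiability of a concave function. This requires the content of Proposition~\ref{P:rock}: the difference quotients of $g'_\pm$ converge to $g''(0)$. If a pointwise bound on $g''$ fails, I would use instead
\[
\frac{g'_\pm(x)-a}{x}\in[-3c,-c] \quad\text{for } 0<\abs{x}\le\alpha ,
\]
which is exactly what that proposition gives. From it, $g'(0)x-g(x)=\int_0^x(a-g'(y))\,\d y\le \tfrac32 cx^2$ follows directly. For $g-k$, I would use
\[
g(x)-k(x)=\int_0^x\bigl(g'(y)-g'(x)\bigr)\,\d y .
\]
Concavity makes the integrand sign-correct, and $g'(y)-g'(x)\ge (a-cx\cdot\text{sgn})-(a-\ldots)$ is handled through the convex-combination bound $g(x)-k(x)\ge g'(x/2)\cdot x/2-g'(x)\cdot x/2\ge$ const$\cdot x^2$. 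The remaining work is to make the measurable choice of $\alpha$ and to check that $\beta$ and $\gamma$ stay positive and predictable.
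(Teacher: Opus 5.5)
\textbf{There is a genuine gap in the lower bound $\beta x^2\le g-k$ near $0$, which is the core of the lemma.}

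Your primary route assumes two things:
\begin{enumerate}[(a)]
\item $g$ is differentiable on a punctured neighbourhood of $0$ with $-3c\le g''\le -c$ there;
\item $g'_\pm(x)=a+\int_0^x g''(y)\,\d y$.
\end{enumerate}
Proposition~\ref{P:rock} gives neither. It only yields $g'_\pm(x)=a-2cx+o(\abs{x})$, i.e.\ it identifies the derivative \emph{at} $0$ of $g'_\pm$, and $g'$ need not be absolutely continuous near $0$.

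For a counterexample, take the concave $g$ with $g(0)=0$ whose derivative near $0$ equals $1-1/j$ on $(1/(j+1),1/j]$ and $1+1/j$ on $[-1/j,-1/(j+1))$, $j\in\N$. It is twice Peano-differentiable at $0$ with $g''(0)=-1$. Yet it is piecewise linear, so $g''=0$ wherever it exists near $0$, and both (a) and (b) fail.

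Your fallback does correctly give $g'(0)x-g(x)\le\frac32cx^2$. For $g-k$, however, the window $[-3c,-c]$ is too coarse for the estimate you sketch. For $0<x\le\alpha$ the split at $x/2$ only yields
\[
\frac{x}{2}\bigl(g'_-(x/2)-g'_-(x)\bigr)\ge\frac{x}{2}\Bigl(\bigl(a-\tfrac32cx\bigr)-(a-cx)\Bigr)=-\frac{c}{4}x^2 .
\]
Inserting the same bounds into $\int_0^x(g'(y)-g'_-(x))\,\d y$ gives only $-\frac{c}{2}x^2$. So the claimed bound by a positive multiple of $x^2$ is not established. The value $\gamma=3$, which came from the primary route, is unsupported as well.

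\textbf{How to repair it.} Either of the following works.
\begin{enumerate}[(1)]
\item Keep the window $[-3c,-c]$ but split at $\theta x$ with $\theta<\frac13$. Monotonicity of $g'$ then gives $g-k\ge c\theta(1-3\theta)x^2$. For $\theta=\frac16$ this is $\frac{c}{12}x^2$, and $\gamma=18$ works. The case $x<0$ is symmetric.
\item Use Proposition~\ref{P:rock} with a narrower window $[-2c-\varepsilon,-2c+\varepsilon]$, $\varepsilon<\frac23c$. This gives $g-k\ge(c-\frac32\varepsilon)x^2$ directly from the integral.
\end{enumerate}

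The paper avoids the integral representation altogether. By Proposition~\ref{P:rock} (cf.\ Lemma~\ref{L:260508London}), $k(x)=g'(0)x+g''(0)x^2+o(x^2)$. Hence $g-k$ and $g'(0)\id-g$ are both twice Peano-differentiable at $0$, with vanishing first derivative and strictly positive second derivative $-g''(0)$. This gives two-sided quadratic bounds on a small interval at once.

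Predictability is then obtained by setting $\alpha=\beta=2^{-n}$ and $\gamma=2^{2n}$ on the first predictable set $D_n$ where the rational-grid inequalities
\[
\beta_nx^2\le g'(0)x-g(x)\le\beta_n^{-1}x^2,\qquad \beta_nx^2\le g(x)-k(x)
\]
hold, extended to all real $x$ by one-sided continuity. This also spares you from arguing that $g''(0)$ itself is predictable, which your choice $\beta=c/2$ would require.
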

The proof of the next theorem can be found  at the end of this section.
\begin{theorem}\label{T}
Suppose $g''(0)<0$, $R$ is instantaneously arbitrage-free, and 
$\gfrak<\infty$. Suppose there exist predictable
processes $\alpha>0$, $\beta>0$, $\gamma\geq1$, and $\delta\geq1$ such that
\eqref{eq:alpha in dom} and \eqref{eq:260519} hold,
\begin{equation}\label{eq:tail-control}
    \abs{g} \indicator{\abs{\id}>\alpha}\leq \delta(g-k)\indicator{\abs{\id}>\alpha},
\end{equation}
where $k$ is defined in \eqref{eq:260924.1}, and
\begin{equation}\label{eq:uniformity}
    \frac{1}{\alpha},
    \qquad
    \frac{1}{\beta},
    \qquad
    \frac{\gamma+\delta}{g'(0)}\qquad \text{are locally bounded.}
\end{equation}
 Then a predictable local optimizer, denoted by
$\lopt$, exists, and
\begin{enumerate}[(i)]
\item\label{T.i}
$\gfrak(\lopt)\sint A_T<\infty$
\end{enumerate}
implies
\begin{enumerate}[resume*]
\item\label{T.ii}
$\lopt$ integrates $R$.
\end{enumerate}

If $g$ is time-constant and deterministic, the assumptions
\eqref{eq:alpha in dom}--\eqref{eq:uniformity} can be replaced by the reasonable asymptotic elasticity condition \eqref{RAE}, i.e.,
\begin{equation}\label{RAE-det}
\left(
    \inf\dom g>-\infty
\right)
\quad\text{or}\quad
\left(
    \liminf_{x\downarrow-\infty}
    \frac{k(x)}{g(x)}>1
\right),
\qquad
\limsup_{x\uparrow\infty}
\frac{k(x)}{g(x)}<1,
\end{equation}
in which case $\delta$ can be chosen time-constant and deterministic.
\end{theorem}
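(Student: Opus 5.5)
The plan is to combine the first-order condition \eqref{eq:260508} with the integrability criterion of Proposition~\ref{P:int}. The non-negative function $g-k$ will serve as a single dominating ``variation'' that controls both terms appearing in Proposition~\ref{P:int}.

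\emph{Step 1: existence.} I first check that the hypotheses imply the Inada condition~\eqref{eq:Inada}, so that Proposition~\ref{P}\ref{P.1} applies; here $\D$ is the whole space because $\gfrak<\infty$.
\begin{itemize}
\item For large $x>0$ with $g(x)>0$, \eqref{eq:tail-control} gives $g\leq\delta(g-xg'_-)$, i.e.\ $xg'_-(x)\leq(1-1/\delta)g(x)$. Integrating yields $g(x)\leq Cx^{1-1/\delta}$, hence $g'(\infty)=0$. If $g$ is bounded above, $g'(\infty)=0$ holds trivially.
\item For $x\downarrow-\infty$ inside $\dom g$, one has $g<0$. Then $-g\leq\delta(g-xg'_+)$ gives $\abs{x}g'_+(x)\geq(1+1/\delta)\abs{g(x)}$. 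So $\abs{g}$ grows at least like $\abs{x}^{1+1/\delta}$, forcing $g'(-\infty)=\infty$. If $\dom g$ is bounded below, the convention $g'=\infty$ off the domain gives this directly.
\end{itemize}
This produces a predictable maximizer $\lopt$. Since $\gfrak(0)=0$, we get $\gfrak(\lopt)\geq0$.

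\emph{Step 2: the key inequality.} Note that $g-k\geq0$ by concavity, since $g(x)-xg'(x)\geq g(0)=0$. Moreover $(g-k)'=-x g''$ in the distributional sense, so $g-k$ is non-increasing on $(-\infty,0]$ and non-decreasing on $[0,\infty)$. By Proposition~\ref{P}\ref{P.2}, $k(\lopt\id)$ is $\sigma$-locally $\sigma$-special with $0\leq b^{k(\lopt\id),R}<\infty$. Subtracting this from $\gfrak(\lopt)=b^{g(\lopt\id),R}$ should give
\[
0\leq b^{(g-k)(\lopt\id),R}=\gfrak(\lopt)-b^{k(\lopt\id),R}\leq\gfrak(\lopt).
\]
Under (i), this makes $b^{(g-k)(\lopt\id),R}\sint A_T$ finite. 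Some care is needed to justify splitting the drift when integrands are only $\sigma$-special. I would do this by truncating $g$ at level $-H$ and passing to the limit, as in the proof of Corollary~\ref{C}.

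\emph{Step 3: domination, with $y=\lopt x$.}
\begin{itemize}
\item \emph{Quadratic term.} On $\abs{y}\leq\alpha$, \eqref{eq:260519} gives $y^2\leq(g-k)(y)/\beta$. On $\abs{y}>\alpha$, monotonicity of $g-k$ gives $(g-k)(y)\geq\beta\alpha^2$. Hence $y^2\wedge1\leq\bigl(\beta^{-1}\vee(\beta\alpha^2)^{-1}\bigr)(g-k)(y)$.
\item \emph{Truncated drift.} Write $g'(0)b^{\tr(\lopt\id),R}=\gfrak(\lopt)+b^{h(\lopt\id),R}$ with $h=g'(0)\tr-g$. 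On $\abs{y}\leq\alpha$ (taking $\alpha\leq1$ without loss), \eqref{eq:260519} gives $0\leq h\leq\gamma(g-k)$. On $\abs{y}>\alpha$, using \eqref{eq:tail-control}, $\abs{h}\leq g'(0)+\abs{g}\leq\bigl(g'(0)/(\beta\alpha^2)+\delta\bigr)(g-k)$.
\end{itemize}
Dividing by $g'(0)$ yields
\[
\abs{b^{\tr(\lopt\id),R}}\leq\frac{\gfrak(\lopt)}{g'(0)}+\Bigl(\frac{\gamma+\delta}{g'(0)}+\frac{1}{\beta\alpha^2}\Bigr)b^{(g-k)(\lopt\id),R}.
\]
By \eqref{eq:uniformity}, and since $\gamma\geq1$ makes $1/g'(0)$ locally bounded, all coefficients are locally bounded, hence pathwise bounded on $[0,T]$. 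Proposition~\ref{P:int} then gives $\lopt\in L(R)$.

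\emph{Step 4: the deterministic case.} Here $\alpha,\beta,\gamma$ come from Lemma~\ref{L:T}, and I construct a constant $\delta$ from \eqref{RAE-det}.
\begin{itemize}
\item For $x\geq M$ with $g(x)>0$, the condition $k\leq(1-\varepsilon)g$ gives $g\leq\varepsilon^{-1}(g-k)$.
\item Symmetrically, for $x\leq-M$ one has $\abs{g}\leq\varepsilon^{-1}(g-k)$, or else $\inf\dom g>-\infty$.
\item On the compact pieces $\alpha\leq\abs{x}\leq M$ within $\dom g$, $\abs{g}$ is bounded, while $g-k\geq\beta\alpha^2>0$.
\end{itemize}

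I expect the main obstacle to be the boundary behaviour when $\inf\dom g>-\infty$, in particular when $g$ is finite at that endpoint. There I would use the convention that $g-k=\infty$ off $\dom g$, together with monotonicity of $g-k$, to keep $\delta$ finite. The other delicate point is the rigorous splitting of drifts in Step~2.
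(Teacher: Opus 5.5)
Steps 1--3 essentially reproduce the paper's argument. You dominate both terms of Proposition~\ref{P:int} directly by $g-k$ with truncation at $1$, whereas the paper truncates at $\alpha$ and then switches. Two small points there. Your ``without loss of generality $\alpha\leq1$'' needs one line: on $1<\abs{x}\leq\alpha$, concavity and \eqref{eq:260519} give $\abs{g}\leq g'(0)x^2+\gamma(g-k)\leq(\gamma+g'(0)/\beta)(g-k)$, so $\delta$ can be enlarged without violating \eqref{eq:uniformity}. The drift splitting you worry about in Step~2 is just the linearity in Lemma~\ref{L:B2}\ref{L:B2.3}, since both drifts are defined, so no truncation is needed.

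The genuine gap is in Step~4, in the case $\xinf=\inf\dom g>-\infty$ with $g(\xinf)=-\infty$. Examples are $g=\log(1+\id)$, or $g_{1/p}$ with $p>1$ from Section~\ref{S:5}. There the left piece is $(\xinf,-\alpha]$, which is not compact in $\dom g$. Upper semicontinuity forces $\abs{g(x)}\to\infty$ as $x\downarrow\xinf$, so your claim that $\abs{g}$ is bounded on it is false. Meanwhile \eqref{RAE-det} tells you nothing at the left end, because its left-hand condition is met by $\xinf>-\infty$ alone. Your suggested remedy does not help. The convention $g-k=\infty$ only settles the trivial points outside $\dom g$. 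Monotonicity of $g-k$ only gives $g-k\geq\beta\alpha^2$, which cannot dominate an unbounded $\abs{g}$. Conversely, the subcase you flagged as delicate, $g(\xinf)$ finite, is the easy one, since $\abs{g}\leq\abs{g(\xinf)}$ on $[\xinf,-\alpha]$.

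The missing idea is that a finite left endpoint at which $g=-\infty$ automatically forces elasticity strictly above one there. Fix $y\in(\xinf,0)$. Concavity gives $g'_+(x)\geq(g(y)-g(x))/(y-x)$ for $x\in(\xinf,y)$. Since $x<0$, this yields
\[
(g-k)(x)=g(x)-xg'_+(x)\geq\frac{y\,g(x)-x\,g(y)}{y-x}.
\]
As $x\downarrow\xinf$, the right-hand side behaves like $\frac{\abs{y}}{y-\xinf}\abs{g(x)}$. Equivalently, $\liminf_{x\downarrow\xinf}k(x)/g(x)\geq-\xinf/(y-\xinf)>1$. This gives $\abs{g}\leq\delta_-(g-k)$ on a neighbourhood $(\xinf,x_0]$ of $\xinf$. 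On the remaining compact interval $[x_0,-\alpha]$ your boundedness argument applies. With this case added, Step~4, and hence the whole proposal, goes through.
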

The next corollary introduces a functional form of $g$ for which the assumptions of Theorem~\ref{T} are easy to verify.
\begin{corollary}
Suppose $g$ is of the form 
    \[g = \eta\widetilde{g}(\zeta\id),\]
    with predictable $\eta>0,\zeta>0$ and $\widetilde{g}$ a time-constant deterministic local utility function that satisfies the reasonable asymptotic elasticity condition \eqref{RAE} and $\widetilde{g}''(0)<0$. Then the assumptions
\eqref{eq:alpha in dom}--\eqref{eq:uniformity} can be replaced by
\[\zeta
\qquad\text{and}\qquad
\frac{1}{\eta\zeta^2}\qquad\text{are locally bounded}\]
or, equivalently,
\[\frac{g''(0)}{g'(0)}\qquad\text{and}\qquad \frac{1}{g''(0)}\qquad\text{are locally bounded,}\]
and $\delta$ can be chosen time-constant and deterministic.
\end{corollary}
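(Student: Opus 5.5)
The plan is to reduce everything to the time-constant deterministic function $\widetilde g$, for which Theorem~\ref{T} already supplies constants, and then track how each quantity in \eqref{eq:alpha in dom}--\eqref{eq:uniformity} transforms under the scaling $x\mapsto\eta\widetilde g(\zeta x)$. I take from Lemma~\ref{L:T} and the deterministic part of Theorem~\ref{T} that, since $\widetilde g''(0)<0$ and $\widetilde g$ satisfies \eqref{RAE}, there exist \emph{constants} $\widetilde\alpha>0$, $\widetilde\beta>0$, $\widetilde\gamma\geq1$, $\widetilde\delta\geq1$ such that \eqref{eq:alpha in dom}, \eqref{eq:260519} and \eqref{eq:tail-control} hold for $\widetilde g$ and $\widetilde k=\widetilde g'_-\id^+-\widetilde g'_+\id^-$. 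For a deterministic function, Lemma~\ref{L:T} gives deterministic $\widetilde\alpha,\widetilde\beta,\widetilde\gamma$. The theorem's last sentence then shows that RAE produces a deterministic $\widetilde\delta$.

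\textbf{Scaling identities.} Since $\eta,\zeta>0$, one-sided derivatives scale as $g'_\pm(x)=\eta\zeta\,\widetilde g'_\pm(\zeta x)$. Hence $k(x)=\eta\,\widetilde k(\zeta x)$ and
\[
(g-k)(x)=\eta\,(\widetilde g-\widetilde k)(\zeta x),
\]
with the same convention outside the domain, since $\dom g=\zeta^{-1}\dom\widetilde g$. Also $g'(0)=\eta\zeta\,\widetilde g'(0)$ and $g''(0)=\eta\zeta^2\,\widetilde g''(0)$.

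\textbf{Choice of the processes.} Put
\[
\alpha=\widetilde\alpha/\zeta,\qquad \beta=\eta\zeta^2\widetilde\beta,\qquad \gamma=\widetilde\gamma,\qquad \delta=\widetilde\delta.
\]
These are all predictable, and $\delta$ is time-constant deterministic. Substituting $y=\zeta x$, the condition $\abs{x}\leq\alpha$ becomes $\abs{y}\leq\widetilde\alpha$. Then:
\begin{itemize}
\item \eqref{eq:alpha in dom} holds because $-2\alpha\in\dom g$ if and only if $-2\widetilde\alpha\in\dom\widetilde g$.
\item The first inequality in \eqref{eq:260519} holds because $\beta x^2=\eta\widetilde\beta y^2\leq\eta(\widetilde g-\widetilde k)(y)$.
\item The second inequality in \eqref{eq:260519} holds because $g'(0)x-g(x)=\eta\bigl(\widetilde g'(0)y-\widetilde g(y)\bigr)\leq\widetilde\gamma\,\eta(\widetilde g-\widetilde k)(y)$.
\item \eqref{eq:tail-control} holds because $\abs{g(x)}=\eta\abs{\widetilde g(y)}$.
\end{itemize}

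\textbf{Verifying \eqref{eq:uniformity}.} Compute
\[
\frac1\alpha=\frac{\zeta}{\widetilde\alpha},\qquad \frac1\beta=\frac{1}{\widetilde\beta\,\eta\zeta^2},\qquad \frac{\gamma+\delta}{g'(0)}=\frac{\widetilde\gamma+\widetilde\delta}{\widetilde g'(0)}\,\zeta\cdot\frac{1}{\eta\zeta^2}.
\]
For the last expression to be finite we need $\widetilde g'(0)>0$. If instead $\widetilde g'(0)=0$, monotonicity and concavity would force $\widetilde g'_+=0$ on $[0,\infty)$, and then $\widetilde g''(0)=0$, a contradiction. Consequently all three expressions are locally bounded as soon as $\zeta$ and $1/(\eta\zeta^2)$ are, because products of locally bounded processes are locally bounded.

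\textbf{Equivalence of the two conditions.} From the derivative formulas,
\[
\frac{g''(0)}{g'(0)}=\zeta\,\frac{\widetilde g''(0)}{\widetilde g'(0)},\qquad \frac{1}{g''(0)}=\frac{1}{\eta\zeta^2\,\widetilde g''(0)}.
\]
The factors $\widetilde g''(0)/\widetilde g'(0)$ and $\widetilde g''(0)$ are nonzero constants, so local boundedness of $\zeta$ and $1/(\eta\zeta^2)$ is equivalent to local boundedness of $g''(0)/g'(0)$ and $1/g''(0)$.

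\textbf{Main obstacle.} The only genuinely delicate point is the input: that RAE for $\widetilde g$ really yields a constant $\widetilde\delta$ with $\abs{\widetilde g}\leq\widetilde\delta(\widetilde g-\widetilde k)$ on $\{\abs{\id}>\widetilde\alpha\}$. I will take this directly from the deterministic clause of Theorem~\ref{T}, rather than re-deriving it. Re-deriving it would require handling possible flat or linear stretches of $\widetilde g$ where $\widetilde g-\widetilde k$ vanishes.
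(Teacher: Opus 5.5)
Your proposal is correct and follows the paper's route: take the constants $\widetilde\alpha,\widetilde\beta,\widetilde\gamma,\widetilde\delta$ from the deterministic part of Theorem~\ref{T}, rescale to $\alpha=\widetilde\alpha/\zeta$, $\beta=\eta\zeta^2\widetilde\beta$, $\gamma=\widetilde\gamma$, $\delta=\widetilde\delta$, read off \eqref{eq:uniformity} from $1/g'(0)=\zeta/(\widetilde g'(0)\,\eta\zeta^2)$, and then apply the first part of Theorem~\ref{T}. Your extra checks, namely $\widetilde g'(0)>0$, the scaling $k=\eta\widetilde k(\zeta\id)$, and the equivalence with local boundedness of $g''(0)/g'(0)$ and $1/g''(0)$, spell out details the paper leaves implicit.
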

\begin{proof}
    The second part of Theorem~\ref{T} applied to $\widetilde{g}$ yields constants $\widetilde\alpha>0$, $\widetilde\beta>0$, $\widetilde\gamma\geq1$, and $\widetilde\delta\geq1$ such that \eqref{eq:alpha in dom}--\eqref{eq:tail-control} hold with \ $\widetilde{}$ \ decorations. It follows by definition of $g$ that \eqref{eq:alpha in dom}--\eqref{eq:tail-control} hold, this time without decorations, for 
    \[\alpha = \frac{\widetilde\alpha}{\zeta} ,\quad \beta=\eta\zeta^2\widetilde\beta,\quad \gamma = \widetilde\gamma,\quad \text{and}\quad \delta = \widetilde\delta. \]
    Moreover,
    \[
    \frac{1}{g'(0)}
    =
    \frac{\zeta}{\widetilde g'(0)}
    \frac{1}{\eta\zeta^2},
    \]
    so \eqref{eq:uniformity} follows from the assumed local boundedness. An application of the first part of Theorem~\ref{T} concludes.
\end{proof}

Examples~\ref{E3} and \ref{E4} show that, for deterministic $g$ with
$\inf\dom g=-\infty$, the strict asymptotic elasticity conditions in
Theorem~\ref{T} cannot in general be replaced merely by the Inada
condition~\eqref{eq:Inada}.

Combining Corollary~\ref{C} and Theorem~\ref{T}, we obtain the
following answer to the initial query.

\begin{corollary}
Theorem~\ref{T:CRS} holds without the assumption of independent increments and with $\rt=-1$ replaced by $\rt\in(-\infty,1)$ in the notation of Example~\ref{ex:LRT}. Furthermore, the implication from \ref{T2.i} to \ref{T2.ii} in Theorem~\ref{T:CRS} holds for all $\rt\in\R$. 
\end{corollary}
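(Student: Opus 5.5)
The plan is to split the claim into two parts. For $\rt\in(-\infty,1)$ we need existence of the optimizer and the equivalence \ref{T2.i}$\iff$\ref{T2.ii}. For arbitrary $\rt\in\R$ we need the implication \ref{T2.i}$\implies$\ref{T2.ii}. Since $\gLRT$ is time-constant and deterministic with $\gLRT'(0)=1$ and $\gLRT''(0)=-1$, all local boundedness requirements in Corollary~\ref{C} and Theorem~\ref{T} are automatic. The work therefore reduces to checking elementary properties of the explicit functions in Example~\ref{ex:LRT}: boundedness above, the Inada condition~\eqref{eq:Inada}, and RAE~\eqref{RAE-det}. Independent increments is never used, because Proposition~\ref{P}, Corollary~\ref{C} and Theorem~\ref{T} are stated for general instantaneously arbitrage-free semimartingales.

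\emph{Step 1: $\rt<1$, existence and \ref{T2.ii}$\implies$\ref{T2.i}.} First I check that $\gLRT$ is bounded above when $\rt<1$.
\begin{itemize}
\item For $\rt<0$ the exponent $1-1/\rt$ exceeds $1$, so $\gLRT\equiv 1/(1-\rt)$ on $[-1/\rt,\infty)$.
\item For $\rt=0$ one has $\gLRT\le1$.
\item For $\rt\in(0,1)$ the exponent is negative, so $\gLRT\uparrow 1/(1-\rt)$.
\end{itemize}
Hence $\gfrak^\rt<\infty$ and $\D=\Omega\times[0,T]$.

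Next I check the Inada condition. One has $g'(\infty)=0$ in each case. For $g'(-\infty)=\infty$:
\begin{itemize}
\item for $\rt<0$, $g'(x)=(1+\rt x)^{-1/\rt}\to\infty$ as $x\downarrow-\infty$;
\item for $\rt=0$, $g'(x)=\e^{-x}\to\infty$;
\item for $\rt\in(0,1)$, $\inf\dom g=-1/\rt>-\infty$, and the convention $g'=\infty$ below the domain gives $g'(-\infty)=\infty$.
\end{itemize}
Since $g''(0)=-1<0$, Proposition~\ref{P}\ref{P.1} yields a predictable optimizer $\lLRT$. Corollary~\ref{C}, with $g'(0)=1$ and $g(\infty)=1/(1-\rt)$, then gives $\gfrak^\rt(\lLRT)\sint A_T<\infty$ whenever $\lLRT\in L(R)$. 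Here I use $\gfrak^\rt(\lLRT)\ge\gfrak^\rt(0)=0$, so that $\gfrak^\rt(\lLRT)\vee0=\gfrak^\rt(\lLRT)$.

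\emph{Step 2: all $\rt$, \ref{T2.i}$\implies$\ref{T2.ii}.} I apply the deterministic part of Theorem~\ref{T}, so it remains to verify \eqref{RAE-det} with $k(x)=xg'(x)$.

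Condition at $+\infty$:
\begin{itemize}
\item for $\rt<0$, $k=0$ eventually while $g>0$;
\item for $\rt=0$, $x\e^{-x}/(1-\e^{-x})\to0$;
\item for $\rt\in(0,1)$, $k(x)=x(1+\rt x)^{-1/\rt}\to0$ while $g\to1/(1-\rt)>0$;
\item for $\rt=1$, $\frac{x/(1+x)}{\log(1+x)}\to0$;
\item for $\rt>1$, both $k$ and $g$ grow like $x^{1-1/\rt}$, and the ratio tends to $(\rt-1)/\rt<1$.
\end{itemize}

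Condition at $-\infty$:
\begin{itemize}
\item for $\rt>0$, $\inf\dom g=-1/\rt>-\infty$;
\item for $\rt=0$, $k(x)/g(x)\sim -x\to\infty$;
\item for $\rt<0$, both $k$ and $g$ behave like multiples of $\abs{x}^{1-1/\rt}$, and the ratio tends to $1-1/\rt>1$.
\end{itemize}

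These are routine asymptotic computations; the sign bookkeeping (where $g<0$ and $k<0$) is the only care needed.

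\emph{Main obstacle: $\rt\ge1$.} Here $\gLRT$ is unbounded above, and Theorem~\ref{T} presupposes $\gfrak^\rt<\infty$. So the statement ``\ref{T2.i}$\implies$\ref{T2.ii} for all $\rt$'' has to be read under the standing requirement that the problem is well posed. I would phrase it as follows: whenever $\gfrak^\rt<\infty$, Theorem~\ref{T} provides an optimizer, and \ref{T2.i} implies \ref{T2.ii}. If instead $\gfrak^\rt=\infty$ on a non-null set, no finite maximizer exists there and the implication is vacuous. I expect the main care to lie in making this interpretation explicit. The case $\rt=1$ with logarithmic growth also needs a short separate limit computation rather than the power-law argument.
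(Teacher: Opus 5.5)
Your proposal is correct and follows the paper's route: verify RAE \eqref{RAE-det} (together with boundedness above for $\rt<1$), then apply Corollary~\ref{C} and Theorem~\ref{T}; your limits agree with the paper's values $1-1/\rt$ and $1-1/(\rt\vee1)$. Your caveat for $\rt\geq1$ can be settled more directly: \ref{T2.i} for a maximizer $\lLRT$ already forces $\sup_\lambda\gfrak^\rt(\lambda)=\gfrak^\rt(\lLRT)<\infty$ $(\P\times A)$-a.e., so the hypothesis $\gfrak^\rt<\infty$ of Theorem~\ref{T} is automatic and no case split is needed.
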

\begin{proof}
One only needs to check reasonable asymptotic elasticity. To this end, letting $\xinfLRT = \inf \dom \gLRT$,  one has 
\begin{equation*}
1<\lim_{x\downarrow\xinfLRT}\frac{x\gLRT'(x)}{\gLRT(x)} = 
\begin{cases}
1-\frac{1}{\rt}, & \qquad\rt<0,
\\[2ex]
\ \,\infty,      & \qquad\rt\geq0,
\end{cases}
\end{equation*}
and 
\[
1>
\lim_{x\uparrow\infty}
\frac{x\gLRT'(x)}{\gLRT(x)}
=
1-\frac{1}{\rt\vee1},\qquad \rt\in\R.
\]
An application of Corollary~\ref{C} and Theorem~\ref{T} concludes.
\end{proof}
\begin{proof}[Proof of Theorem~\ref{T}]
The monotonicity of $g'_+$ and $g'_-$ yields that
\[
g-k
\text{ is non-increasing on }(-\infty,0]
\text{ and non-decreasing on }[0,\infty).
\]
Hence, the first inequality in \eqref{eq:260519} gives
\begin{equation}\label{eq:260519a}
    \beta(\id^2\wedge\alpha^2)\leq g-k.
\end{equation}
Moreover, \eqref{eq:260519} yields
\begin{equation}\label{eq:260519b}
g\indicator{\abs{\id}\leq\alpha}
\leq
g'(0)\id\indicator{\abs{\id}\leq\alpha}
\leq
g\indicator{\abs{\id}\leq\alpha}
+
\gamma(g-k).
\end{equation}
By \eqref{eq:tail-control} and the piecewise monotonicity of $g-k$,
\begin{equation}\label{eq:260519c}
    \abs{g}\indicator{\abs{\id}>\alpha}
    \leq \delta(g-k).
\end{equation}
This implies RAE pointwise for (almost) all $(\omega,t)$, which in turn yields the Inada condition~\eqref{eq:Inada}. Thus Proposition~\ref{P} yields a predictable local optimizer, here denoted by $\lopt$.

By the local optimality of $\lopt$ and $\gfrak<\infty$, we have
$\gfrak(\lopt)\in\R$. Hence, by \eqref{eq:locutil} and
\eqref{eq:260508},
\[
    0\leq b^{g(\lopt\id),R}
    = \gfrak(\lopt)<\infty,
    \qquad
    0\leq b^{k(\lopt\id),R}<\infty.
\]
Hence, by Lemma~\ref{L:B2}\ref{L:B2.3},
\begin{equation}\label{eq:260519d}
    0\leq
    b^{(g-k)(\lopt\id),R}
    = b^{g(\lopt\id),R}-b^{k(\lopt\id),R}
    \leq
    b^{g(\lopt\id),R}.
\end{equation}

Now \eqref{eq:260519a}, \eqref{eq:260519d}, and
Lemma~\ref{L:B2}\ref{L:B2.3} yield
\begin{equation}\label{eq:midQV}
    \beta
    b^{(\id^2\wedge\alpha^2)(\lopt\id),R}
    \leq
    b^{(g-k)(\lopt\id),R}
    \leq b^{g(\lopt\id),R}.
\end{equation}
Furthermore, \eqref{eq:260519b}, the triangle inequality,
\eqref{eq:260519c}, and Lemma~\ref{L:B2}\ref{L:B2.3} give
\begin{equation*}
\begin{split}
g'(0)
\left|
b^{(\id\indicator{\abs{\id}\leq\alpha})(\lopt\id),R}
\right|
&\leq
\left|
b^{(g\indicator{\abs{\id}\leq\alpha})(\lopt\id),R}
\right|
+
\gamma b^{(g-k)(\lopt\id),R}
\\
&\leq
b^{g(\lopt\id),R}
+
\left|
b^{(g\indicator{\abs{\id}>\alpha})(\lopt\id),R}
\right|
+
\gamma b^{(g-k)(\lopt\id),R}
\\
&\leq
(1+\gamma+\delta)b^{g(\lopt\id),R}.
\end{split}
\end{equation*}
Since $\gamma\geq1$, we thus have
\begin{equation}\label{eq:260519f}
b^{(\id^2\wedge\alpha^2)(\lopt\id),R}
+
\left|
b^{(\id\indicator{\abs{\id}\leq\alpha})(\lopt\id),R}
\right|
\leq
\left(
    \frac{1}{\beta}
    +
    \frac{2(\gamma+\delta)}{g'(0)}
\right)
b^{g(\lopt\id),R}.
\end{equation}

Since $1/\alpha$ is locally bounded, the truncation at $\alpha$ in
\eqref{eq:260519f} can be replaced by the truncation at $1$ in
Proposition~\ref{P:int}.
In view of
\begin{equation}\label{eq:gfrak lopt}
    b^{g(\lopt\id),R}
    =
    \gfrak(\lopt),
\end{equation}
\eqref{eq:uniformity}, \eqref{eq:260519f}, and \ref{T.i} yield
\[
\left(
b^{(\id^2\wedge1)(\lopt\id),R}
+
\left|b^{\tr(\lopt\id),R}\right|
\right)\sint A_T<\infty.
\]
Proposition~\ref{P:int} therefore yields \ref{T.ii}.

It remains to consider time-constant deterministic $g$. By twice
Peano differentiability at zero, constants $\alpha>0$, $\beta>0$,
and $\gamma\geq1$ can be chosen such that
\eqref{eq:alpha in dom} and \eqref{eq:260519} hold. 

Suppose first that $\xinf=\inf\dom g>-\infty$. If $g(\xinf)$ is finite,
then $\abs{g}$ is bounded on $[\xinf,-\alpha]$, whereas
\eqref{eq:260519} and the monotonicity of $g-k$ yield
$g-k\geq\beta\alpha^2$ on $[\xinf,-\alpha]$.
Hence $k/g$ is bounded away from $1$ there, which yields a finite $\delta_->0$ such that
\begin{equation}\label{260816Tenby}
    -g(x)\leq \delta_-\bigl(g(x)-k(x)\bigr),
    \qquad x\in\dom g,\quad x<-\alpha.
\end{equation}
If instead $g(\xinf)=-\infty$, fix $y\in(\xinf,0)$. By concavity, for $x \in (\xinf, y)$,
\[
    g'_+(x)\geq\frac{g(y)-g(x)}{y-x},
\]
and therefore
\[
    \liminf_{x\downarrow\xinf}\frac{k(x)}{g(x)}
    \geq
    \frac{-\xinf}{y-\xinf}>1.
\]
Thus $k/g$ is bounded away from $1$ near $\xinf$, while on the
remaining compact subinterval of $(\xinf,-\alpha]$ it is bounded away from $1$ by the preceding argument. Hence again there is
$\delta_->0$ such that \eqref{260816Tenby} holds. 
Suppose finally that $\xinf=-\infty$. Then 
\[
\liminf_{x\downarrow\xinf}\frac{k(x)}{g(x)}>1
\]
follows from the left-hand condition in \eqref{RAE-det} and we again obtain $\delta_->0$ such that \eqref{260816Tenby} holds. 

For $x>\alpha$, the right-hand condition in \eqref{RAE-det} yields a
constant $\delta_+>0$ such that
\[
    g(x)\leq \delta_+\bigl(g(x)-k(x)\bigr),
    \qquad x>\alpha.
\]

Taking $ \delta=1\vee \delta_+\vee \delta_-$
gives \eqref{eq:tail-control}. Since $\alpha$, $\beta$, $\gamma$, $\delta$, and $g'(0)$ are now
time-constant and deterministic, \eqref{eq:uniformity} follows automatically.
\end{proof}

The next remark provides a 
process-level interpretation of finite cumulative local expected utility in \ref{T.i}. In optimal portfolio applications, it is helpful to know whether the processes $g\circ (\lopt\sint R)$ and $k\circ (\lopt\sint R)$ are special. 

\begin{remark}
Suppose the assumptions of Theorem~\ref{T} are satisfied and \ref{T.i} therein holds. Then $g$ and $k$  are  
$\sigma$-locally $\sigma$-special relative to $\lopt\sint R$ by  \eqref{eq:260519c}, \eqref{eq:260519d}, and
Definition~\ref{D:260612}.
Furthermore, if
$(g-k)\circ(\lopt\sint R)$ is well defined, then it is special by \ref{T.i}, \eqref{eq:midQV}, and the characterisation of special representations in \cite[Lemma~D.1]{crs.25.aheadofprint}.  If, moreover, $g\circ(\lopt\sint R)$ is well defined  and  $g'(0)$ is locally bounded, then $g\circ(\lopt\sint R)$ is special and by \eqref{eq:gfrak lopt} its predictable compensator $B^{g\circ(\lopt\sint R)}$ satisfies
\[
    B^{g\circ(\lopt\sint R)}
    =
    \gfrak(\lopt)\sint A.
\]

Indeed, on
$\{\abs{\id}\leq\alpha,\ \abs{g}>1\}$, 
\eqref{eq:260519b} yields
\[
\abs{g}\leq g'(0)\abs{\id}+\gamma(g-k).
\]
If $g'(0)\abs{\id}\geq1/2$, then
\[
g'(0)\abs{\id}\leq2g'(0)^2\id^2
\leq\frac{2g'(0)^2}{\beta}(g-k),
\]
whereas, if $g'(0)\abs{\id}<1/2$, then $\abs{g}>1$ implies
$\gamma(g-k)>1/2>g'(0)\abs{\id}$. Together with
\eqref{eq:260519c}, we therefore obtain 
\[
\abs{g}\indicator{\abs{g}>1}
\leq
\left(\frac{2g'(0)^2}{\beta}+2\gamma+\delta\right)(g-k).
\]
The factor in parentheses is locally bounded by the assumptions of the
Theorem~\ref{T} and the local boundedness of $g'(0)$. 
By \ref{T.i} therein and the characterisation of special representations in \cite[Lemma~D.1]{crs.25.aheadofprint}, $g\circ(\lopt\sint R)$ is special.
\end{remark}

\section{An application}\label{S:5}

We now discuss an application concerning globally optimal strategies for power utility, where the results of this study offer some simplifications. 
\begin{theorem}[{\citet{kallsen.muhle-karbe.10.aap}, Theorem~3.1}]\label{T:KMK10AAP}
Let $\mathbb F$ denote the underlying filtration.
Suppose $R$ admits differential characteristics with respect to $A_t=t$, there is an equivalent local martingale measure for $R$, and there exists a $\sigma$-field $\mathcal H$ such that $R$ is an
$\mathcal H$-PII and the differential characteristics of $R$ are
$\mathcal H$-measurable and coincide with the differential
characteristics with respect to the enlarged filtration
$\mathbb F\vee\mathcal H$.

Fixing $0<p\neq1$, assume there exists an $\R^d$-valued process $\piopt \in L(R)$ such that the following conditions are satisfied up to a $\P \otimes A$-null set on $\Omega \times [0,T]$.
\begin{enumerate}[(a)]
\item\label{T:KMK10AAP.a} $F^R(\{x \in \R^d: 1+\piopt x \leq 0\})=0$. 
\item\label{T:KMK10AAP.b} $\int \abs{x(1+\piopt x)^{-p}-\tr(x)}F^R(\d x)<\infty$. 
\item\label{T:KMK10AAP.c} For all $\pi \in \R^d$ such that 
\begin{equation}\label{solvency}F^R(\{x \in \R^d: 1+\pi x < 0\})=0,
\end{equation}
one has 
$$(\pi-\piopt)\left(b^{\tr\circ R}- pc^R \piopt^\top +\int \left(x(1+\piopt x)^{-p}-\tr(x)\right)F^R(\d x)\right) \leq 0.$$
\item\label{T:KMK10AAP.d} $\abs{\gfrak^{1/p}(p\piopt)}\sint A_T<\infty$.
\end{enumerate}
Then the strategy $\hat{\pi}\Exp(\hat{\pi} \sint R)_{-}$
is optimal, i.e.,  
$$\frac{\E[(1+\varphi\sint R_T)^{1-p}]}{1-p}\leq \frac{\E[(1+\hat{\pi}\Exp(\hat{\pi} \sint R)_{-}\sint R_T)^{1-p}]}{1-p}=\frac{\E\left[\exp\left((1/p-1)\gfrak^{1/p}(p\piopt)\sint A_T\right)\right]}{1-p}$$
for all $\varphi\in L(R)$ such that $1+\varphi\sint R>0$.  
\end{theorem}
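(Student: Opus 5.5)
The plan is a classical duality (verification) argument, organised around the local expected utility $\gfrak^{1/p}$ of Example~\ref{ex:LRT} with $\rt=1/p$. Up to positive constants one has $g_{1/p}(p\pi x)$ equal to $\frac{(1+\pi x)^{1-p}-1}{1-p}$. So $\gfrak^{1/p}(p\pi)$ is the drift rate of the power-utility variation of $\pi\sint R$. Conditions \ref{T:KMK10AAP.a}--\ref{T:KMK10AAP.c} then say that $\piopt$ is a local optimizer of $\gfrak^{1/p}(p\,\cdot)$ over the solvency constraint \eqref{solvency}. Indeed, \ref{T:KMK10AAP.b} makes the relevant integrals finite, and \ref{T:KMK10AAP.c} is the first-order condition in the form of \eqref{eq:260507} from Proposition~\ref{P}\ref{P.2}, restricted to the solvency set.

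I would define the wealth $V=\Exp(\piopt\sint R)$, which is strictly positive by \ref{T:KMK10AAP.a}, and $\Gamma=(1/p-1)\gfrak^{1/p}(p\piopt)\sint A$, which is finite by \ref{T:KMK10AAP.d}. The candidate dual process is
\[
Z=\frac{V^{-p}\,\e^{\Gamma_T-\Gamma}}{\E[\e^{\Gamma_T}]}\cdot(\text{normalisation}),
\]
or more precisely $Z_t\propto V_t^{-p}L_t$ with opportunity process
\[
L_t=\E\bigl[\e^{(1/p-1)\int_t^T\gfrak^{1/p}(p\piopt)\d A}\,\big|\,\F_t\vee\mathcal H\bigr].
\]
Because the characteristics are $\mathcal H$-measurable and $R$ is an $\mathcal H$-PII, the process $\Gamma$ is $\mathcal H$-measurable. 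Hence, after conditioning on $\mathcal H$, $L$ is simply $\e^{\Gamma_T-\Gamma}$.

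The main steps, in order, are as follows.
\begin{enumerate}[(1)]
\item Using Itô's formula for powers of stochastic exponentials, compute the drift rate of $V^{1-p}\e^{-\Gamma}$. The local optimality of $\piopt$ and the definition of $\gfrak^{1/p}$ show that this drift rate is zero, so the process is a local martingale.
\item Similarly, show that $V^{-p}\e^{\Gamma_T-\Gamma}(1+\varphi\sint R)$ has non-positive drift for every admissible $\varphi$. Writing $\varphi=\pi V'$ with $V'=1+\varphi\sint R>0$, so that $\pi$ satisfies \eqref{solvency}, the drift is proportional to the left-hand side of \ref{T:KMK10AAP.c} and is therefore $\le0$. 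This makes the process a positive local supermartingale, hence a supermartingale.
\item Upgrade the local martingale in (1) to a true martingale. Conditionally on $\mathcal H$, $V^{1-p}\e^{-\Gamma}$ is a stochastic exponential of a process with independent increments and deterministic characteristics. Such a positive exponential PII local martingale is a true martingale, by the Lépingle--Mémin / PII criterion or directly via its deterministic compensator. Taking expectations over $\mathcal H$ then yields $\E[V_T^{1-p}]=\E[\e^{\Gamma_T}]$, which is the identity of the value claimed in the theorem. Note that $\piopt\Exp(\piopt\sint R)_-\sint R=V-1$.
\item Conclude with the pointwise concavity inequality $u(x)\le u(y)+y^{-p}(x-y)$ for $u(x)=x^{1-p}/(1-p)$. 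Apply it with $x=1+\varphi\sint R_T$ and $y=V_T$, multiply by $\e^{\Gamma_T}$-type weights (which are $\mathcal H$-measurable), and take conditional expectations given $\mathcal H$. The supermartingale property from (2) and the martingale property from (3) give $\E[y^{-p}(x-y)\mid\mathcal H]\le0$ after weighting, and hence the optimality inequality.
\end{enumerate}

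I expect step (3), together with the bookkeeping of the conditioning on $\mathcal H$ in step (4), to be the main obstacle. The local-martingale and supermartingale statements are routine drift computations. True martingality, however, requires the PII structure and the integrability \ref{T:KMK10AAP.d}. Moreover, the weighting by $\e^{\Gamma_T}$ must be arranged so that it can be pulled outside the conditional expectations. I would first try to carry out the whole argument under $\P[\,\cdot\mid\mathcal H]$, where $\Gamma$ is deterministic, and only integrate over $\mathcal H$ at the very end.
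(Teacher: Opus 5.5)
Your architecture is the natural one: condition on $\mathcal H$, work in $\mathbb F\vee\mathcal H$ with the dual candidate $V^{-p}\e^{\Gamma_T-\Gamma}$, obtain the supermartingale property from \ref{T:KMK10AAP.c} and the martingale property from the PII structure, then conclude by concavity. Step~(2) is correct: the drift rate of $V^{-p}\e^{-\Gamma}\Exp(\pi\sint R)$ relative to its left limit is exactly the left-hand side of \ref{T:KMK10AAP.c}. The paper does not prove this theorem; it only quotes it from Kallsen and Muhle-Karbe. Its last paragraph of Section~\ref{S:5}, however, addresses exactly the point your argument skips.

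\textbf{The gap is in step~(3).} The process $\piopt$ is merely $\mathbb F$-predictable. Conditioning on $\mathcal H$ makes the characteristics of $R$ deterministic, but not $\piopt$. The characteristics of $\piopt\sint R$, such as $\piopt c^R\piopt^\top$ or the image of $F^R$ under $x\mapsto\piopt x$, remain a priori random under $\P[\,\cdot\mid\mathcal H]$. So nothing in your argument makes $\log\bigl(V^{1-p}\e^{-\Gamma}\bigr)$ a PII there, and the PII martingale criterion is unavailable. For a general positive local martingale you only get $\E[V_T^{1-p}\e^{-\Gamma_T}\mid\mathcal H]\leq1$. That destroys both the value identity and the inequality $\E[V_T^{-p}(X_T-V_T)\mid\mathcal H]\leq0$ in step~(4), where $X=1+\varphi\sint R$.

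The same issue affects your claim that $\Gamma$ is $\mathcal H$-measurable. $\Gamma$ involves $\gfrak^{1/p}(p\piopt)$, which is a function of the characteristics alone only once you know it equals $\sup_\lambda\gfrak^{1/p}(\lambda)$.

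\textbf{How to close it.} You need two extra ingredients.
\begin{enumerate}[(i)]
\item Show that $p\piopt$ is a pointwise \emph{global} maximizer of $\gfrak^{1/p}$. Since $\edom\gfrak^{1/p}$ is contained in $p$ times the solvency set \eqref{solvency}, conditions \ref{T:KMK10AAP.b}--\ref{T:KMK10AAP.d} give \eqref{eq:260507}, and Lemma~\ref{L:FOC Levy} applies. This makes $\Gamma$ $\mathcal H$-measurable and yields $\gfrak^{1/p}<\infty$.
\item Use uniqueness. Since $g_{1/p}$ is strictly concave with $g_{1/p}''(0)=-1$, Proposition~\ref{P}\ref{P.3} shows that $p\piopt$ and an $\mathcal H$-measurable predictable maximizer $p\widetilde\pi$ differ by a null strategy. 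Such a $\widetilde\pi$ is obtained by measurable selection, as in Kallsen and Muhle-Karbe. Hence $V=\Exp(\widetilde\pi\sint R)$.
\end{enumerate}
Conditionally on $\mathcal H$, $\widetilde\pi$ is deterministic, so $V^{1-p}\e^{-\Gamma}=\Exp(N)$ with $N$ a PII local martingale. True martingality then follows from the deterministic exponential compensator. Do not rely on L\'epingle--M\'emin here: it would require an $x\log x$-type jump moment that your hypotheses do not provide.

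\textbf{Smaller points.}
\begin{itemize}
\item Write $\varphi=\pi V'_-$, not $\pi V'$.
\item Step~(1) needs only the definition of $\Gamma$, not optimality.
\item Transferring $\varphi\in L(R)$ and $\varphi\sint R$ from $\mathbb F$ to $\mathbb F\vee\mathcal H$ relies on the characteristics coinciding, via Proposition~\ref{P:int}.
\end{itemize}
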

Our aim is to highlight, based on the results in Sections~\ref{S:3} and \ref{S:4}, that the assumptions of Theorem~\ref{T:KMK10AAP} are implied by  an apparently weaker set of assumptions. To this end, fix $0<p\neq1$ and consider the local utility function $g=g_{1/p}$ from Example~\ref{ex:LRT} and the corresponding local expected utility $\gfrak$. 

Consider a predictable $\piopt$ that satisfies conditions
\ref{T:KMK10AAP.a}--\ref{T:KMK10AAP.d} but not necessarily
$\piopt\in L(R)$. Setting $\lopt = p\piopt$,
condition~\ref{T:KMK10AAP.d} yields $\gfrak(\lopt)\in\R$, while
conditions \ref{T:KMK10AAP.b} and \ref{T:KMK10AAP.c} yield
\eqref{eq:260507} for all $\lambda\in\edom\gfrak$.
Lemma~\ref{L:FOC Levy} in turn yields that $\lopt = p\piopt$ is a local
optimizer and $\gfrak<\infty$. Together with Theorem~\ref{T},
condition~\ref{T:KMK10AAP.d} then also yields $\lopt\in L(R)$, hence
$\piopt\in L(R)$.

We now turn to the question of existence of a strategy $\piopt$ that satisfies the assumptions of Theorem~\ref{T:KMK10AAP}. The existence of an equivalent local martingale measure is stronger than INA. Since $g$ satisfies the Inada condition~\eqref{eq:Inada} and $g''(0)<0$, Proposition~\ref{P} yields the existence of a predictable local optimizer $\lopt\in\edom\gfrak$. Letting $\piopt=\lopt/p$ and assuming further that \ref{T:KMK10AAP.d} holds, we have $\gfrak<\infty$. 
Since $\dom(\id g')=(-p,\infty)$ and $\lopt=p\piopt$,
\[
F^R\bigl(\{x:1+\piopt x\leq0\}\bigr) = 
F^R\bigl(\{x:\lopt x\notin\dom(\id g')\}\bigr)
=0
\]
by Proposition~\ref{P}\ref{P.2} with $\lambda=0$.
 Thus condition~\ref{T:KMK10AAP.a} holds.
 Moreover,  Proposition~\ref{P} and condition~\ref{T:KMK10AAP.b} yield 
\begin{equation}\label{FOC KMK}
-\infty<b^{g'(\lopt\id)(\lambda-\lopt)\id, R} = b^{g'(\lopt\id)\id, R}(\lambda-\lopt)\leq 0,\quad \lambda\in\operatorname{cl}(\edom\gfrak).
\end{equation}
 If $\pi$ satisfies the solvency condition~\eqref{solvency}, 
then $yp\pi\in\edom\gfrak$ for all $y\in[0,1)$ and thus $p\pi$ lies in the closure of $\edom\gfrak$. This and \eqref{FOC KMK} with $\lambda = p\pi$ shows \ref{T:KMK10AAP.c}.
Since $g$ also satisfies the RAE condition~\eqref{RAE}, condition \ref{T:KMK10AAP.d} additionally yields $\piopt\in L(R)$ via Theorem~\ref{T}. Hence in Theorem~\ref{T:KMK10AAP}, if $\piopt$ is chosen as $\lopt/p$ for a predictable local optimizer $\lopt$ of $\gfrak$, we may omit conditions~\ref{T:KMK10AAP.a} and \ref{T:KMK10AAP.c}, and the assumption that $\piopt\in L(R)$.

The predictable local optimizer $\lopt$ furnished in the context of this example by
Proposition~\ref{P} need not be $\mathcal H$-measurable.
The measurable-selection argument in
\citet[Theorem~3.1]{kallsen.muhle-karbe.10.aap}, however, yields a
predictable and $\mathcal H$-measurable process $\widetilde\pi$
satisfying the same pointwise optimality conditions. Hence
$p\widetilde\pi$, too, is a local optimizer of $\gfrak$.
Since $g$ is strictly concave and $g''(0)<0$,
Proposition~\ref{P}\ref{P.3} yields
$
    (\lopt-p\widetilde\pi)\sint R=0$.
Thus $\piopt=\lopt/p$ and the $\mathcal H$-measurable optimizer
$\widetilde\pi$ generate the same gains process.

\section{Counterexamples}\label{S:6}

All examples in this section illustrate tightness of Corollary~\ref{C} and Theorem~\ref{T} for deterministic time-constant local utility functions $g$.
The first example shows the importance of boundedness from above in Corollary~\ref{C}.
It shows that there exists an unbounded local utility function $g$ satisfying the  RAE condition~\eqref{RAE}, an instantaneously arbitrage-free semimartingale $R$, and a predictable process $\zeta\in L(R)$ such that $\gfrak<\infty$ but $\gfrak(\zeta)\sint A_T=\infty$. Indeed, $\zeta$ may be chosen to be the local optimizer.
\begin{example}\label{E2}
    Let $f: [0, \infty) \to [3, \infty)$ be a Borel-measurable function. Consider the one-dimensional time-inhomogeneous compound Poisson process $R$ with $A_t=t$, $b^{R[0]}=0$, $c^R=0$, and 
    \[F_t^R(\d x) = \left(\indicator{-1/2\leq x<0} + \indicator{\e-1<x\leq f(t)}\frac{1}{(1+x)(\ln(1+x))^2}\right)\d x, \qquad (t, x) \in [0, \infty) \times \R.\]
Such a process is well defined since $\int_\R\indicator{\abs{x}>1}F^R(\d x) \leq 1$, $\int_\R x^2\indicator{\abs{x}\leq1}F^R(\d x) = 1/24$, and $\abs{b^{\tr\circ R}} = 1/8$ all integrate $A$. Since $F^R$ charges both positive and negative values, $R$ is instantaneously arbitrage-free.

Consider now the unbounded local utility function $g=\ln(1+\id)$, which satisfies the RAE condition~\eqref{RAE}. Since $b^R$ is finite, we have $\gfrak<\infty$. Moreover,
\[
\edom\gfrak_t\subset\left[-\frac{1}{f(t)},2\right], \qquad t \geq 0.
\]
Indeed, for $t > 0$, if $\lambda>2$, then $1+\lambda x<0$ on a set of negative jumps of positive $F_t^R$-measure, while if $\lambda<-1/f(t)$, the same holds on a set of positive jumps of positive $F_t^R$-measure. Since $g''(0)=-1$ and $g$ satisfies the Inada condition~\eqref{eq:Inada},
Proposition~\ref{P}\ref{P.1} yields a predictable local optimizer $\lopt\in\edom\gfrak$; thus $\lopt\in L(R)$.

We have 
\[\gfrak_t(1)=\frac{1}{2}\ln2-\frac{1}{2}+\ln\left(\ln(1+f(t))\right)>0,\qquad t\geq0.\]
Choosing $f$ such that $\gfrak(1)$ is not integrable on $[0,1]$, e.g., $f(0)=3$ and $f(t)=\exp(\e^{1/t})+2$, $t>0$, gives
\[
\gfrak(\lopt)\sint A_1
\geq
\gfrak(1)\sint A_1
=
\infty.
\]
Thus the local optimizer integrates $R$, while its cumulative maximal local expected utility is infinite.
\end{example}

We now introduce two local utility functions that fail the RAE condition~\eqref{RAE} at different ends of the real line in order to illustrate why neither component of the RAE condition in Theorem~\ref{T} can simply be omitted. We summarise the main idea behind the construction of the examples here.

Inspecting the proof of Theorem~\ref{T}, the integrability of $\lopt$ with respect to $R$ stands and falls with the integrability of 
\[ b^{(\id^2\wedge\alpha^2)(\lopt\id),R}
+
\left|b^{(\id\indicator{\abs{\id}\leq\alpha})(\lopt\id),R}\right| \]
with respect to the activity process $A$, for some fixed $\alpha>0$.
 Finite cumulative utility together with~\eqref{eq:midQV} yields that $b^{(\id^2\wedge\alpha^2)(\lopt\id),R}$ must integrate $A$, hence the  integrability of $\lopt$ with respect to $R$ hinges on the $A$-integrability of $\abs{b^{\id\indicator{(\abs{\id}\leq\alpha})(\lopt\id),R}}$, which, by \eqref{eq:260519b} together with~\eqref{eq:260519d}, hinges on the $A$-integrability of the local expected utility generated by middling returns, $\abs{
b^{(g\indicator{\abs{\id}\leq\alpha})(\lopt\id),R}}$.
 
 In Example~\ref{E3}, the right-tail utility $
b^{(g\indicator{\id>\alpha})(\lopt\id),R}$ is $A$-integrable thanks to RAE at $+\infty$. Therefore, neither the left-tail utility $
b^{(g\indicator{\id<-\alpha})(\lopt\id),R}$ nor the midsection utility $b^{(g\indicator{\abs{\id}\leq\alpha})(\lopt\id),R}$ can be $A$-integrable if we want $\lopt\notin L(R)$, although their sum must integrate $A$ to make the cumulative local expected utility finite. At the same time, \eqref{eq:260519d} and $g-k\geq0$ dictate that $
b^{((g-k)\indicator{\id<-\alpha})(\lopt\id),R}$ must be $A$-integrable. Example~\ref{E3} shows that one can square these three objectives, fundamentally because with the RAE failing at $-\infty$, the gap $g-k$ is much smaller than $\abs{g}$ in the left tail. 

Example~\ref{E4} follows the same logic but with the RAE condition failing at $+\infty$, hence the lack of $A$-integrability now occurs in the midsection and the right tail of the utility.
 
\begin{example}\label{E3}
Consider the local utility function $g$ given by 
\begin{equation*}
g(x) = 
\begin{cases}
x(1+\ln(-x))-\frac{1}{2},       & x\leq-1;
\\[2ex]
x\wedge1 - \frac{1}{2}(x\wedge1)^2, & x > -1.
\end{cases}
\end{equation*}
This utility function is bounded above but fails the RAE condition~\eqref{RAE} in the left tail. Indeed, one has $$\lim_{x\downarrow-\infty}\frac{xg'(x)}{g(x)}=\lim_{x\downarrow-\infty}\frac{x(2+\ln(-x))}{x(1+\ln(-x))-\frac{1}{2}}=1.$$
However, $g$ satisfies the Inada condition~\eqref{eq:Inada} as well as $g''(0)<0$ and $g\leq 1/2$, hence by Proposition~\ref{P} a predictable local optimizer, here denoted $\lopt$, exists for any instantaneously arbitrage-free $R$. Furthermore, since $g$ is bounded above, integrability of $\lopt$ implies finite cumulative utility by Corollary~\ref{C}. 

Since $g$ does not satisfy the reasonable asymptotic elasticity condition at $-\infty$, we wish to understand whether, and if so how, $\lopt$ can have finite cumulative utility without integrability. 
To this end, we consider the sequence 
of probabilities $(p_n)_{n \in \N} \subset (0,1)$  given by 
\[p_n = \frac{1}{25(1 + (n \ln n)^2)}, \qquad n \in \N,\]
and define
\[
a_n
=
\frac{p_n n g'(-n)}{1-p_n}
=
\frac{n(2+\ln n)}
{25\bigl(1+(n\ln n)^2\bigr)-1}, \qquad n \in \N.
\]
This gives
$
a_1=1/12<1/4
$.
For \(n\ge2\), using \(\ln n\ge \frac12\), we obtain
\[
a_n
\le
\frac{1}{24}\frac{n(2+\ln n)}{n^2(\ln n)^2}
=
\frac{1}{24}\left(\frac{2}{n(\ln n)^2}+\frac{1}{n\ln n}\right)
\le
\frac{1}{24}(4+1)
=
\frac{5}{24}
<
\frac14.
\]
Using these probabilities, we consider next independent random variables $(X_n)_{n\in\N}$ such that 
$$\P\left[X_n=-n\right]=p_n,\qquad \P\left[X_n=u_n\right]=1-p_n,$$
where
\[ u_n = \frac{1-\sqrt{1-4a_n}}{2} = \frac{2a_n}{1+\sqrt{1-4a_n}} > 0.
\]
Observe, for $n\geq2$, we have
$u_n\geq a_n\geq 1/(50n\ln n)$, and hence
$\sum_{n\in\N}u_n=\infty$.

Since 
\[
\sum_{n \in \N} \frac{u_n}{n} <  
2 \sum_{n \in \N} \frac{a_n}{n} 
<2 \sum_{n \in \N} \frac{p_n g'(-n)}{1-p_n} < 12 \sum_{n \in \N} p_n n < \infty
\]
and since $\sum_{n \in \N} p_n < \infty$, Borel-Cantelli yields that
\[
    R_t = \sum_{n=1}^\infty \frac{X_n}{n} \indicator{\lc \tau_n, \infty\lc}, \qquad \text{where } \tau_n = 2-\frac{1}{n},
\]
is a finite-variation PII semimartingale $R$ equal to the sum of its jumps at the discrete times $(\tau_n)_{n\in\N}$. Note that $R$ is instantaneously arbitrage-free. We take
\[
A_t=\sum_{n\in\N}2^{-n}\indicator{\{\tau_n\leq t\}}, \qquad t \geq 0,
\]
so that
\[
F^R_{\tau_n}
=
2^n\left(
p_n\delta_{-1}
+
(1-p_n)\delta_{u_n/n}
\right),
\qquad n\in\N.
\]

Observe that
\[
    2^{-n}\gfrak_{\tau_n}(\lambda) = 
    p_n g(-\lambda) + (1-p_n) g\left(\lambda \frac{u_n}{n}\right), \qquad \lambda \in \R, n \in \N.
\]
Since
\begin{align*}
    2^{-n}\gfrak_{\tau_n}'(n) &= - p_n g'(-n) + (1-p_n) g'(u_n) \frac{u_n}{n}
    = - p_n g'(-n) + (1-p_n) (1 - u_n) \frac{u_n}{n}\\
    &= - p_n g'(-n)  + \frac{1-p_n}{n}  a_n
    = - p_n g'(-n)  + p_n g'(-n) = 0,
\end{align*}
where we used $u_n - u_n^2 = a_n$, we have $\lopt_{\tau_n}=n$ and  
\begin{align*}
    2^{-n}\gfrak_{\tau_n}(\lopt_{\tau_n}) &= p_n g(-n) + (1-p_n) g(u_n) 
    = p_n g(-n) + (1-p_n) \left(a_n + \frac{1}{2} u_n^2\right)\\
    &= p_n (g(-n) + n g'(-n)) + \frac{1}{2} (1-p_n) u_n^2
    = p_n \left(n - \frac{1}{2}\right) + \frac{1}{2} (1-p_n) u_n^2,
\end{align*}
for each $n\in\N$. Since $\sum_{n \in \N} p_n n < \infty$ and $\sum_{n \in \N} u_n^2 \leq 4\sum_{n \in \N} a_n^2 < \infty$, we have
\[
\gfrak(\lopt)\sint A_2
=
\sum_{n\in\N}2^{-n}\gfrak_{\tau_n}(\lopt_{\tau_n})<\infty,
\]
hence Theorem~\ref{T}\ref{T.i} is satisfied. However, $\lopt$ does not integrate $R$ since $\lopt \sint R_{\tau_n} = \sum_{k=1}^n X_k$ converges to $\infty$ as $n$ tends to infinity by Borel-Cantelli, thus Theorem~\ref{T}\ref{T.ii} does not hold.

One may wonder at this point whether our example is very degenerate in the sense of allowing arbitrage opportunities over well-behaved strategies. For each $n \in \N$, set
$r_n=u_n/(n+u_n)$ and let $L_n=r_n/p_n$ on $\{X_n=-n\}$ and
$L_n=(1-r_n)/(1-p_n)$ otherwise. Then $\E[L_n]=1$, the mean of $X_n$
under the probability measure with density $L_n$ is zero, and
\[
\E[L_n^2]
=
1+\frac{(r_n-p_n)^2}{p_n(1-p_n)}, \qquad n \in \N.
\]
Using $r_n\leq u_n/n$, $u_n<2a_n$, and
$a_n\leq 1/(8n\ln n)$ for $n\geq2$, we obtain
\[
\frac{r_n^2}{p_n}
\leq
\frac{25\left(1+(n\ln n)^2\right)}{16n^4(\ln n)^2},
\qquad n\geq2.
\]
Together with $\sum_{n\in\N}p_n<\infty$, this yields
$\sum_{n\in\N}(\E[L_n^2]-1)<\infty$. Independence therefore implies
that $Z_N=\prod_{n=1}^N L_n$ is an $L^2$-bounded density martingale and
converges in $L^2$ to some $Z$ with $\E[Z]=1$. Moreover,
$\sum_{n\in\N}(p_n+r_n)<\infty$ implies $Z>0$ almost surely, so $Z$
defines an equivalent probability measure $\Q$. By construction, the
jumps of $R$ have zero $\Q$-mean, and
\[
\sum_{n\in\N}\E^\Q[|\Delta R_{\tau_n}|]
=
\sum_{n\in\N}\frac{2u_n}{n+u_n}
<\infty.
\]
Thus $R$ admits an equivalent martingale measure with square-integrable density and, in particular, the model is arbitrage-free over all strategies whose maximal loss is square integrable.
\end{example}

\begin{example}\label{E4}
Consider the local utility function
\begin{equation*}
g(x) =
\begin{cases}
x - \frac{1}{2}x^2, & x\leq\frac{1}{2};
\\[2ex]
\frac{3}{8} + \frac{x-\frac{1}{2}}{2 (1+\ln(x+\frac{1}{2}))}, & x > \frac{1}{2}
\end{cases}
\end{equation*}
with derivative
\[
g'(x)=
\begin{cases}
1-x, & x\leq\frac12,\\[2ex]
\displaystyle
\frac{\ln\left(x+\frac12\right)+\frac{1}{x+\frac12}}
{2\left(1+\ln\left(x+\frac12\right)\right)^2},
& x>\frac12.
\end{cases}
\]
Then
\[
\lim_{x\uparrow\infty}\frac{xg'(x)}{g(x)}=1,
\qquad
\lim_{x\downarrow-\infty}\frac{xg'(x)}{g(x)}=2.
\]
Moreover, \(g\) satisfies the Inada condition~\eqref{eq:Inada}, \(g''(0)<0\), and \(g\) is unbounded above.

Similarly to Example~\ref{E3}, we now construct a process $R$ such that \(\lopt\) exists and has finite cumulative utility but does not integrate \(R\). To this end, for each $n \in \N$, consider
$
p_n=\frac{1}{n^2+1} \in (0,1)$
and define
\[
a_n
=
\frac{p_n n g'(n)}{1-p_n}
=
\frac{g'(n)}{n} > 0, \qquad
d_n
=
\frac{1-\sqrt{1+4a_n}}{2}
=
-\frac{2a_n}{1+\sqrt{1+4a_n}} < 0.
\]
Then 
\[
d_n-d_n^2=-a_n, \qquad n \in \N, \qquad \text{and} \qquad
\sum_{n\in\N}\frac{|d_n|}{n}<\infty,
\qquad
\sum_{n\in\N}|d_n|=\infty.
\]
Let now \((X_n)_{n\in\N}\) be independent random variables such that
\[
\P[X_n=n]=p_n,
\qquad
\P[X_n=d_n]=1-p_n.
\]
Since \(\sum_{n\in\N}p_n<\infty\), Borel--Cantelli implies that only finitely many events \(\{X_n=n\}\) occur. Hence, as previously, the process
\[
    R_t = \sum_{n\in\N} \frac{X_n}{n} \indicator{\lc \tau_n, \infty\lc}, \qquad \text{where } \tau_n = 2-\frac{1}{n},
\]
is a finite-variation PII semimartingale $R$ equal to the sum of its jumps at the discrete times $(\tau_n)_{n\in\N}$. 
Moreover, \(R\) is instantaneously arbitrage-free, since each jump distribution has support on both sides of zero. As in Example~\ref{E3}, we take
\[
A_t=\sum_{n\in\N}2^{-n}\indicator{\{\tau_n\leq t\}}, \qquad t \geq 0.
\]

The local expected utility now satisfies
\[
2^{-n}\gfrak_{\tau_n}(\lambda)
=
p_n g(\lambda)
+
(1-p_n)g\left(\lambda\frac{d_n}{n}\right),
\qquad \lambda\in\R, n \in \N,
\]
and 
\begin{align*}
2^{-n} \gfrak_{\tau_n}'(n)
={}&
p_n g'(n)
+
(1-p_n)g'(d_n)\frac{d_n}{n} 
=
p_n g'(n)
-
\frac{1-p_n}{n}a_n
=
0, \qquad n \in \N. 
\end{align*}
By concavity of \(\gfrak_{\tau_n}\) we hence have
$
\lopt_{\tau_n}=n$ for each $n \in \N$. 
Furthermore, we get
\begin{align*}
2^{-n}\gfrak_{\tau_n}(\lopt_{\tau_n})
&=
p_n g(n)+(1-p_n)\left(d_n-\frac12d_n^2\right) =
p_n g(n)-(1-p_n)a_n+\frac12(1-p_n)d_n^2 \\
&=
p_n\bigl(g(n)-n g'(n)\bigr)
+
\frac12(1-p_n)d_n^2\\
&\leq \frac{1}{n^2}
\left(
\frac{3}{8} + 
\frac{n \left(1+\ln\left(n+\frac12\right)\right)
- n \ln(n+\frac{1}{2})}
{2\left(1+\ln\left(n+\frac12\right)\right)^2}
\right) + d_n^2\\
&\leq \frac{1}{n^2}
+ 
\frac{1}
{2n\left(1+\ln\left(n+\frac12\right)\right)^2}
 + 4 a_n^2
, \qquad n \in \N.
\end{align*}
Hence we have $\gfrak(\lopt)\sint A_2
=
\sum_{n\in\N} 2^{-n}\gfrak_{\tau_n}(\lopt_{\tau_n})<\infty$,
so the cumulative local expected utility of the optimizer is finite.
On the other hand,
$
\lim_{n \uparrow \infty} \lopt\sint R_{\tau_n}
=
\sum_{n \in \N} X_n = -\infty$ by Borel--Cantelli, 
and thus \(\lopt\) does not integrate \(R\).
\end{example}

Although local expected utility typically does not admit a maximizer for continuous processes when $g''(0)=0$, such optimization remains well defined in discrete-time models (i.e., when $R$ is the sum of its jumps at predictable times in the \'Emery topology; see \citet{cerny.ruf.21.bej}). Here, for $\gfrak<\infty$, the local maximizers are guaranteed to exist even with $g''(0)=0$ under instantaneous absence of arbitrage~\eqref{eq:INA} and the Inada condition~\eqref{eq:Inada}; in fact, under the milder condition 
\begin{equation}\label{Inada2}
\frac{g'(\infty)
}{g'(-\infty)}=0,
\end{equation}
with the convention $0/0 = 0$. This was previously shown in 
\citet[Theorem~14]{cerny.al.12} for the special case of bounded jumps in a one-period setting. We next argue the general discrete-time case.

\begin{proposition}\label{P:discrete time}
Suppose $R$ equals the sum of its jumps at predictable times in the \'Emery topology. If $R$ is instantaneously arbitrage-free and $g$ satisfies the weaker Inada condition~\eqref{Inada2}, then $\gfrak$ admits a predictable optimizer. 
\end{proposition}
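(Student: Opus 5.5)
In the discrete-time case, $c^R=0$ and $F^R$ is concentrated at predictable jump times with total mass controlled by $\Delta A$. At such a time the local problem is, after normalization, a one-period expected utility problem
\[
\lambda\mapsto \E\bigl[g(\lambda\Delta R_\tau)\,\big|\,\F_{\tau-}\bigr].
\]
Here the drift part $g'(0)\lambda b^{\tr\circ R}$ exactly cancels the compensating $-g'(0)\lambda\tr(x)$ term, because $b^{R[0]}=0$ when $R$ is the sum of its jumps. I therefore plan to rewrite $\gfrak(\lambda)=\int_{\R^d}g(\lambda x)F^R(\d x)$ on the relevant set, where $F^R$ is a finite measure. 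The term with $g''(0)$ disappears, which is why $g''(0)<0$ is not needed. We may also assume $\gfrak<\infty$, as in the surrounding discussion.

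The proof will proceed pointwise in $(\omega,t)$ and then use measurable selection, mimicking the proof of Proposition~\ref{P}\ref{P.1}. The pointwise existence of a maximizer will come from Lemma~\ref{L:260830London} with Inada replaced by \eqref{Inada2}, and Lemma~\ref{L:260826London} will supply the predictable selection. The pointwise step runs as follows.

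First, I reduce to the orthogonal complement $\mathcal N^\perp$ of the null set $\mathcal N$ from the proof of Proposition~\ref{P}\ref{P.3}. On $\mathcal N$ the function $\gfrak$ is invariant under translation, since $\lambda x=0$ $F^R$-a.e. for $\lambda\in\mathcal N$.

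Second, on $\mathcal N^\perp$ INA in its form \eqref{eq:INA_tau} gives the following: for every unit vector $\lambda$ the measure $F^R$ charges $\{\lambda x<0\}$. By compactness of the unit sphere and lower semicontinuity of $\lambda\mapsto F^R(\{\lambda x<-\epsilon\})$, there are $\epsilon,\eta>0$ with $F^R(\{\lambda x<-\epsilon\})\geq\eta$ uniformly.

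Third, I show that $\gfrak$ has no direction of recession, i.e. $\lim_{y\to\infty}\gfrak(y\lambda)/y<0$ for every unit $\lambda\in\mathcal N^\perp$. By Lemma~\ref{L:260422Granada}, now with $b^{R[0]}=0$, this limit equals
\[
g'(\infty)\int(\lambda x)^+F^R(\d x)-g'(-\infty)\int(\lambda x)^-F^R(\d x).
\]
If $g'(-\infty)=\infty$, the negative part is strictly positive, so the limit is $-\infty$; this covers $\inf\dom g>-\infty$, with the convention $g'=\infty$ there. Otherwise \eqref{Inada2} forces $g'(\infty)=0$, and the limit is $-g'(-\infty)\int(\lambda x)^-F^R(\d x)<0$. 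Here $g'(-\infty)>0$ unless $g$ is constant on $\R$, a trivial case. The convention $0/0=0$ only matters when both derivatives vanish, and then $g\equiv0$ and any $\lambda$ is optimal.

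Finally, a proper upper semicontinuous concave function with no nonzero recession direction has bounded nonempty superlevel sets and attains its supremum (Rockafellar, Theorem~27.1). Upper semicontinuity of $\gfrak$ follows from Fatou, since $g\leq g'(0)\id^+$ is integrable against the finite measure $F^R$ when $\int|x|F^R<\infty$. In general I would truncate, using $\gfrak<\infty$.

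The main obstacle I expect is the third step when $F^R$ has heavy tails and $\int\abs{x}F^R(\d x)=\infty$. The limit formula of Lemma~\ref{L:260422Granada} then needs care, and the positive-part integral $\int(\lambda x)^+F^R$ could be infinite while $g'(\infty)=0$. I would handle this by a direct estimate instead: for a unit $\lambda$, split $\gfrak(y\lambda)/y$ into the part over $\{\lambda x<-\epsilon\}$ and the rest. The first part is at most $\eta g(-y\epsilon)/y\to-\epsilon\eta g'(-\infty)$. The second is at most $\int g((y\lambda x)^+)/y\,F^R(\d x)$, which tends to $0$ by monotone convergence, since $g(z)/z\downarrow g'(\infty)=0$ and domination comes from $\gfrak(\lambda)<\infty$. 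I then need the compactness argument to make the resulting negative bound uniform in $\lambda$.
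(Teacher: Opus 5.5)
Your route is the paper's. At a predictable time with $\Delta A_\tau>0$ you use $c^R_\tau=0$ and $b^{\tr\circ R}_\tau=\int\tr(x)F^R_\tau(\d x)$. You then combine Lemma~\ref{L:260422Granada}, applied to the triplet of $\lambda\id$, with \eqref{eq:INA_tau} to show that every recession direction of $\gfrak_\tau$ is a direction of constancy, and finish with upper semicontinuity, Rockafellar and Lemma~\ref{L:260826London}. Quotienting by $\mathcal N$ and using Theorem~27.1 is equivalent to the paper's remark that the recession cone is a linear subspace, followed by Theorem~27.3.

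\textbf{The gap: upper semicontinuity.} This is not established when $\int\abs{x}F^R(\d x)=\infty$, and ``truncate, using $\gfrak<\infty$'' does not close it. Truncating $g$ at level $K$ writes $\gfrak$ as an increasing limit of upper semicontinuous functions, and such a limit need not be upper semicontinuous. Truncating $F^R$ instead leaves a tail term whose positive part must be controlled uniformly in $\lambda$. What you need is an $F^R$-integrable majorant of $g(\lambda\id)^+$ holding uniformly over bounded subsets of $\dom\gfrak$. Lemma~\ref{L:260428Granada} builds this majorant from three ingredients:
\begin{enumerate}[(a)]
\item a basis of $\dom\gfrak$;
\item the inequality $g(-\id)^-\geq g(\id)^+$;
\item subadditivity of $g^+$.
\end{enumerate}
The lemma applies here because $g$ is upper semi-continuous and $\gfrak<\infty$ on $\D$, so you should simply cite it.

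\textbf{Unneeded steps and a small correction.}
\begin{enumerate}[(1)]
\item Your heavy-tail concern in step three is unfounded. Lemma~\ref{L:260422Granada} keeps $g'(\infty)$ inside the integral and asserts $\int g'(\infty)x\indicator{0<x}F(\d x)<\infty$. So when $g'(\infty)=0$ the positive-part term is zero whatever the tails.
\item The compactness argument giving uniform $\epsilon,\eta$ is unnecessary. A closed proper concave function with no nonzero direction of recession already has bounded superlevel sets, so no uniform slope bound is required.
\item The domination in your direct estimate needs $\gfrak(\lambda)\in\R$, not merely $\gfrak(\lambda)<\infty$. Under the integral convention, $\gfrak(\lambda)=-\infty$ is compatible with $\int g(\lambda x)^+F^R(\d x)=\infty$. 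Directions with $\gfrak(\lambda)=-\infty$ cause no trouble, because concavity and $\gfrak(0)=0$ give $\gfrak(y\lambda)\leq y\gfrak(\lambda)=-\infty$ for $y\geq1$.
\end{enumerate}
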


\begin{proof}
Fix a predictable time $\tau$. Without loss of generality it now suffices to argue that $\gfrak_\tau$ admits a predictable optimizer on
$\{\Delta A_\tau>0\}\cap\D$, and we assume throughout the proof that we are on this event. One then has
\[
b_\tau^{\tr\circ R}
=
\int_{\R^d}\tr(x)F^R_\tau(\d x),
\qquad
c^R_\tau=0
\]
by \cite[Proposition~II.2.9]{js.03}, and instantaneous absence of
arbitrage amounts to \eqref{eq:INA_tau}. On $\{g_\tau'(0)=0\}$, concavity
yields $g_\tau\leq0$, and hence zero is a local optimizer. We therefore
work on $\{g_\tau'(0)>0\}$. In this case $g_\tau'(-\infty)\in(0,\infty]$, and
\eqref{Inada2} yields $g_\tau'(\infty)=0$ if $g_\tau'(-\infty)\in\R$.

Fix $\lambda\in\edom\gfrak_\tau$. Since we work on $\D$,
$\gfrak_\tau(\lambda)\in\R$, and Lemma~\ref{L:260422Granada}, applied
to the triplet associated with $\lambda\id$, yields $\int_{\R}g_\tau'(\infty)x\indicator{0<x}F_\tau^{\lambda\id,R}(\d x)<\infty$ and
\[
\lim_{y\uparrow\infty}\frac{\gfrak_\tau(\lambda y)}{y}
=
\int_{\R}g_\tau'(\infty)x\indicator{0<x}F_\tau^{\lambda\id,R}(\d x)
+
\int_{\R}g_\tau'(-\infty)x\indicator{x<0}F_\tau^{\lambda\id,R}(\d x).
\]
If $F_\tau^{\lambda\id,R}(\R_-) >0$, the right-hand side is strictly
negative. Otherwise, \eqref{eq:INA_tau} yields
$F_\tau^{\lambda\id,R}(\R_+)=0$, and hence
$\gfrak_\tau(y\lambda)=0$ for all $y\in\R$, so that $\lambda$ belongs to the
lineality space of $\gfrak_\tau$. Thus the recession cone of $\gfrak_\tau$ is a linear subspace.
By Lemma~\ref{L:260428Granada}, $\gfrak_\tau$ is upper
semi-continuous on $\D$. Hence
\citet*[Theorem~27.3]{rockafellar.70} yields that $\gfrak_\tau$
attains its maximum on $\D$. The assertion now follows from
Lemma~\ref{L:260826London}.
\end{proof}

The next example shows that in discrete-time setting with $g''(0)=0$, one may encounter models with finite cumulative local expected utility in which the local optimizer does not integrate the underlying discrete-time stochastic process. Unlike Examples~\ref{E2}--\ref{E4} concerning the RAE condition~\eqref{RAE},  the next example only concerns local behaviour of $g$ near zero.

\begin{example}
For  $p\geq2$, consider the utility function
$$ g = \id \wedge 1 - \frac{1}{p}\abs{\id\wedge1}^p.$$
 Observe that for $p=2$ one has $g''(0)=-1$, while $g''(0)=0$ for all $p>2$.
Consider next a sequence of independent random variables $(X_n)_{n\in\N}$ such that 
$$\P\left[X_n=\pm \frac{1}{n+1}\right]=\frac{1}{2}\pm \frac{1}{2(n+1)^{(p-1)/2}}$$
and observe that $\E[X_n]=(n+1)^{-(p+1)/2}$ 
is summable. 
Letting
\[
    R_t = \sum_{n=1}^\infty X_n \indicator{\lc \tau_n, \infty\lc}, \qquad \text{where } \tau_n = 2-\frac{1}{n},
\]
 now yields a PII semimartingale $R$. 

Since $R$ is instantaneously arbitrage-free and $g'(-\infty)=\infty$, the local maximizer $\lopt$ exists by Proposition~\ref{P:discrete time}. Indeed, an easy calculation yields
$$\lopt_{\tau_n} = \sqrt{n+1}=\argmax_{\lambda\in\R}\E[g(\lambda\Delta R_{\tau_n})]
\qquad \text{with}
\qquad
\E[g(\lopt_{\tau_n}\Delta R_{\tau_n})]=\left(1-\frac{1}{p}\right)(n+1)^{-p/2},
\qquad
n \in \N.
$$ 
We also have $\E[(\lopt_{\tau_n}\Delta R_{\tau_n})^2]=(n+1)^{-1}$ for all $n\in\N$. 

For $p>2$, we therefore get:
\begin{enumerate}[(1)]
\item\label{e.i} The cumulative local expected utility is finite, i.e.,
$$ \int_0^2 \gfrak_t(\lopt_t) \d A_t = \sum_{n\in\N}\E[g(\lopt_{\tau_n}\Delta R_{\tau_n})]<\infty.$$
\item\label{e.ii} The process $\lopt$ does not integrate $R$ since, per Proposition~\ref{P:int},
\[
b^{(\id^2\wedge1)(\lopt\id),R}\sint A_2
=
\sum_{n\in\N}\E[(\lopt_{\tau_n}\Delta R_{\tau_n})^2]
=
\sum_{n\in\N}\frac{1}{n+1}
=
\infty.
\]
\end{enumerate} 
On the other hand, Theorem~\ref{T:CRS} asserts that \ref{e.i} and \ref{e.ii} cannot hold simultaneously for $p=2$.
\end{example}

\appendix
\section{Predictable measurability}\label{Appx:A}

\begin{lemma}\label{L:260826London}
Let $\ffrak:\R^d\to[-\infty,\infty]$ be a predictable function and set
\[
    \D^\ffrak=\{(\omega,t):\ffrak_t(\lambda)<\infty
    \text{ for all }\lambda\in\R^d\}.
\]
Then $\D^\ffrak$ admits a predictable version. Moreover, if $\ffrak$ attains
its maximum $(\P\times A)$-almost everywhere on $\D^\ffrak$, then there is an
$\R^d$-valued predictable process $\lopt$ such that
\[
    \ffrak(\lopt)=\sup_{\lambda\in\R^d}\ffrak(\lambda).
\]
\end{lemma}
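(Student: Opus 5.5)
The plan is to rely on the standard measurable-selection machinery for normal integrands (\citet[Chapter~14]{rockafellar.wets.98}), applied on the product space $\Omega\times[0,T]$ with the predictable $\sigma$-field $\mathscr P$, completed with respect to $\P\times A$. This completion is why the lemma only claims a ``version'' of $\D^\ffrak$. Two awkward features of $\ffrak$ must be handled: it need not be upper semi-continuous in $\lambda$, and it may take the value $+\infty$.

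\emph{Step 1: predictability of $\D^\ffrak$.} For each $\lambda$ in a countable dense set $\mathbb Q^d$, the set $\{\ffrak_t(\lambda)=\infty\}$ is predictable. However, $\D^\ffrak$ quantifies over all $\lambda\in\R^d$, so a countable reduction is not available directly. Its complement is the projection onto $\Omega\times[0,T]$ of the $\mathscr P\otimes\mathscr B(\R^d)$-measurable set $\{(\omega,t,\lambda):\ffrak_t(\omega,\lambda)=\infty\}$. By the measurable projection theorem, this projection is universally measurable, and in particular $(\P\times A)$-measurable relative to $\mathscr P$. Hence it differs from a predictable set by a $(\P\times A)$-null set, and that predictable set is the version we fix.

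\emph{Step 2: the maximizer.} Consider the set-valued map
\[
\Lambda(\omega,t)=\{\lambda:\ffrak_t(\lambda)=\sup\ffrak_t\}
\]
on $\D^\ffrak$. Its graph is $\{(\omega,t,\lambda):\ffrak_t(\lambda)\geq \sup_{\mu}\ffrak_t(\mu)\}$. The supremum $\sup_\mu\ffrak_t(\mu)$ is measurable with respect to the universal completion, again by projection: for each $c$, the set $\{\sup>c\}$ is the projection of $\{\ffrak>c\}$. The graph is therefore measurable for the completed product $\sigma$-field, and by hypothesis $\Lambda$ is nonempty almost everywhere on $\D^\ffrak$. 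The Aumann (or von Neumann--Jankov) selection theorem then gives a universally measurable selector $\lambda^*$ defined almost everywhere on $\D^\ffrak$. Off $\D^\ffrak$, we set $\lambda^*$ arbitrarily, for instance equal to some $\lambda$ with $\ffrak=\infty$ (again obtained by a measurable selection) so that it attains the supremum $+\infty$, or simply $0$ if the claim is only needed on $\D^\ffrak$. Finally, we replace $\lambda^*$ by a predictable process $\lopt$ equal to it $(\P\times A)$-almost everywhere. Since every statement in the paper is understood $(\P\times A)$-a.e., $\ffrak(\lopt)=\sup\ffrak$ continues to hold.

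\emph{Main obstacle.} The delicate point is the last replacement, together with the fact that no semicontinuity is available to make the argmax map closed-valued. The Aumann route avoids needing closedness, but it needs completeness of the measure space, and here the measure $\P\times A$ is only $\sigma$-finite since $A$ is locally integrable. I would first localise via $A$, or pass to an equivalent finite measure, so that the completion arguments apply. Alternatively, if $\ffrak$ is concave, as in all applications here, $\ffrak$ is upper semi-continuous after closure on the interior of its domain. In that case a Castaing-type argument with countable dense sequences yields a genuinely predictable selector, which I would keep as a backup.
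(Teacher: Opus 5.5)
Your proposal is correct and follows essentially the same route as the paper. Both arguments obtain measurability of $(\D^\ffrak)^c$ and of $\sup_\lambda\ffrak(\lambda)$ as projections of $\mathcal{P}\otimes\mathcal{B}(\R^d)$-measurable sets, with respect to the completion under a probability measure equivalent to $\P\times A$, and then apply Aumann's selection theorem on that complete space before passing to a predictable version.

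One caveat: your fallback of setting the selector to $0$ off $\D^\ffrak$ would not prove the lemma as stated, because $\ffrak(\lopt)=\sup\ffrak$ is also required on $(\D^\ffrak)^c$. Your other option, selecting some $\lambda$ with $\ffrak=\infty$ there, does work. The paper avoids a second selection altogether: it notes that $\{\ffrak=\sup\ffrak\}$ already has nonempty sections on $(\D^\ffrak)^c$, since the value $+\infty$ is attained there, so a single selection covers both regions.
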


\begin{proof}
If $\P\times A$ is the zero measure, there is nothing to prove.
Hence suppose that $\P\times A$ is non-zero.
Let $\mathcal{P}$ denote the predictable $\sigma$-field on
$\Omega\times[0,\infty)$. Since $\P\times A$ is $\sigma$-finite, there exists an
equivalent probability measure $\nu$ on $\mathcal{P}$. Moreover, since $\ffrak$ is predictable as a function
on $\Omega\times[0,\infty)\times\R^d$, the set
\[
 G=\left\{((\omega,t),\lambda):\ffrak_t(\lambda)=\infty\right\}
\]
belongs to $\mathcal{P}\otimes\mathcal{B}(\R^d)$,
and its projection
onto $\Omega\times[0,\infty)$ is $\D^{\ffrak,c}$.

By \citet*[Theorem~6.9.12]{bogachev.07}, $\D^{\ffrak,c}$ is a Souslin
set generated by $\mathcal{P}$. By
\citet*[Theorem~1.10.5]{bogachev.07}, such sets belong to $\mathcal{P}_\nu$, the
$\nu$-completion of $\mathcal{P}$. Since $\nu$ and
$\P\times A$ have the same null sets, $\D^\ffrak$ therefore admits
a predictable version.

For the second assertion, define the $[-\infty,\infty]$--valued process
\[
 X=\sup_{\lambda\in\R^d}\ffrak(\lambda)
\]
and note
\[
 \{X>a\}
 =
 \operatorname{proj}_{\Omega\times[0,\infty)}
 \left\{((\omega,t),\lambda):\ffrak_t(\lambda)>a\right\},
 \qquad a\in\R.
\]
The same argument as above yields that $X$ is
$\mathcal{P}_\nu$-measurable. By assumption, there exists
$N\in\mathcal{P}_\nu$ with $\nu(N)=0$ such that $\ffrak$ attains its
maximum on $\D^\ffrak\setminus N$. On $\D^{\ffrak,c}$, the maximum is attained as well,
since $\ffrak_t(\lambda)=\infty$ for some $\lambda\in\R^d$. Hence
\[
 H=
 \left\{((\omega,t),\lambda):
 \ffrak_t(\lambda)=X_t\right\}
 \cup(N\times\{0\})
\]
belongs to $\mathcal{P}_\nu\otimes\mathcal{B}(\R^d)$ and has nonempty
sections.

Since $(\Omega\times[0,\infty),\mathcal{P}_\nu,\nu)$ is a
complete probability space, the Aumann measurable selection theorem
\citep*[Theorem~6.9.13]{bogachev.07} yields an $\R^d$-valued
$\mathcal{P}_\nu$-measurable process whose graph is
contained in $H$. Since $\mathcal{P}_\nu$ is the $\nu$-completion of $\mathcal{P}$, this process admits a $\mathcal{P}$-measurable version $\lopt$, completing the proof.
\end{proof}

\section{Second-order Peano differentiability}\label{Appx:B}

\begin{proposition}[{\citet[Corollary~2.4, Theorem~2.8]{rockafellar.00}}] \label{P:rock}
A concave function $g:\R\to[-\infty, \infty]$ is twice Peano-differentiable at $0$ if and only if $g'(0)$ exists and there is $A\leq0$ such that 
\begin{align*}
    \lim_{x\to0}\frac{g'_+(x)-g'(0)-Ax}{\abs{x}} = \lim_{x\to0}\frac{g'_-(x)-g'(0)-Ax}{\abs{x}} = 0,
\end{align*}
in which case $A=(g_+')'(0)=(g_-')'(0)$ is the classical second derivative $g''(0)$ of $g$ computed over the punctured neighbourhood of zero where $g'$ exists.    
\end{proposition}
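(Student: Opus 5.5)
The plan is to prove both directions directly from the Peano expansion, using the two standard facts about a concave function $g$ that is finite on a neighbourhood of $0$. First, the one-sided derivatives $g'_\pm$ are non-increasing and sandwiched by difference quotients. Second, $g$ is locally absolutely continuous there, with $g(x)-g(0)=\int_0^x g'_+(s)\,\d s$. Twice Peano-differentiability at $0$ means that $g'(0)$ exists and that there is $A$ with
\[
r(x):=g(x)-g(0)-g'(0)x-\tfrac12 Ax^2=o(x^2),\qquad x\to0.
\]
In particular $g$ is finite near $0$, so both facts apply. If instead we start from the derivative condition, existence of a finite $g'(0)$ puts $0$ in the interior of $\dom g$, so again both facts apply.

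\emph{($\Leftarrow$).} Assume the derivative condition. Integrating gives
\[
r(x)=\int_0^x\bigl(g'_+(s)-g'(0)-As\bigr)\,\d s.
\]
The integrand is $o(\abs{s})$, so $\abs{r(x)}\le \sup_{\abs{s}\le\abs{x}}\frac{\abs{g'_+(s)-g'(0)-As}}{\abs{s}}\cdot\frac{x^2}{2}=o(x^2)$. Hence $g$ is twice Peano-differentiable at $0$ with second Peano derivative $A$. The sign $A\le0$ is either part of the hypothesis or follows from the monotonicity of $g'_+$.

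\emph{($\Rightarrow$).} Assume $r(x)=o(x^2)$. To get $A\le0$, note that concavity gives
\[
0\ge g(x)+g(-x)-2g(0)=Ax^2+r(x)+r(-x)=Ax^2+o(x^2),
\]
so $A\le0$. The main step is a squeeze of the one-sided derivatives by difference quotients at a step proportional to $x$. Fix $\varepsilon\in(0,1)$ and, for $x\neq0$, set $h=\varepsilon\abs{x}$. Concavity yields
\[
\frac{g(x+h)-g(x)}{h}\le g'_+(x)\le g'_-(x)\le\frac{g(x)-g(x-h)}{h}.
\]
Inserting the expansion of $g$, the outer quotients equal $g'(0)+A(x\pm h/2)+\bigl(r(x\pm h)-r(x)\bigr)/(\pm h)$. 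Since $\abs{x\pm h}\le2\abs{x}$, the remainder is $o(x^2)/(\varepsilon\abs{x})=o(\abs{x})/\varepsilon$. Dividing by $\abs{x}$ gives
\[
\limsup_{x\to0}\frac{\abs{g'_\pm(x)-g'(0)-Ax}}{\abs{x}}\le\frac{\abs{A}\varepsilon}{2}.
\]
Letting $\varepsilon\downarrow0$ proves the limits.

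The delicate point is exactly this choice of step $h=\varepsilon\abs{x}$. It has to be small relative to $x$ so that the term $Ah/2$ is negligible, but of order $x$ so that the $o(x^2)$ remainder divided by $h$ is still $o(\abs{x})$. Taking $\varepsilon\downarrow0$ only after $x\to0$ reconciles the two requirements.

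Finally, at points of the punctured neighbourhood where $g'$ exists we have $g'=g'_+=g'_-$. The limits then say $(g'(x)-g'(0))/x\to A$, so $A=(g'_+)'(0)=(g'_-)'(0)$ is the classical second derivative computed on that punctured neighbourhood, which is also $g''(0)$.
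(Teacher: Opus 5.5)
Your proof is correct. The paper gives no argument of its own for this proposition; it imports it from Rockafellar's second-order convex analysis, so your route necessarily differs from the source.

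The cited results are stated for convex functions on $\R^n$. They relate a second-order expansion at a point to a first-order expansion (differentiability) there of the set-valued subgradient mapping. The proposition is the scalar specialisation: for concave $g$, the superdifferential at $x$ near $0$ is the interval $[g'_+(x),g'_-(x)]$. Your two limits say precisely that this map is differentiable at $0$ with derivative $A$.

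You instead argue directly on the line. For ($\Leftarrow$) you integrate $g'_+$ using local absolute continuity. For ($\Rightarrow$) you trap $g'_\pm(x)$ between chord slopes over a step $h=\varepsilon\abs{x}$. That is exactly the right scaling: the curvature error $\abs{A}h/2$ is of order $\varepsilon\abs{x}$, while the remainder error is $o(\abs{x})/\varepsilon$.

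Your approach buys a short, self-contained proof that uses only monotonicity of one-sided derivatives and the chord-slope inequalities. That suffices for the paper, which only applies the proposition to scalar concave $g$ (e.g.\ in Lemma~\ref{L:260508London}). The citation buys the $\R^n$ version, which is not used here.

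Two cosmetic points. In ($\Leftarrow$) the supremum should run over $0<\abs{s}\leq\abs{x}$. It is also worth stating explicitly that every step takes place inside an open interval around $0$ on which $g$ is finite, so the possible values $\pm\infty$ of $g$ elsewhere play no role.
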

\begin{lemma}\label{L:260508London}
    If a concave function $g:\R\to[-\infty, \infty]$ is twice Peano-differentiable at $0$, then for any $\lambda,\lambda'\in\R^d$, the function 
    \[g'_+(\lambda\id)(\lambda'\id)^+
    - g'_-(\lambda\id)(\lambda'\id)^-  \]
    is twice Peano differentiable at $0$ with derivative $g'(0) \lambda'$ and second-order Peano derivative 
    $g''(0) (\lambda^\top \lambda' + \lambda'^\top \lambda)$.
\end{lemma}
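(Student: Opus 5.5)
By Proposition~\ref{P:rock}, twice Peano differentiability of $g$ at $0$ is equivalent to a first-order expansion of the one-sided derivatives. I plan to plug that expansion into the defining formula of the function and read off the second-order Taylor expansion directly.

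Write $h(x)=g'_+(\lambda x)(\lambda' x)^+-g'_-(\lambda x)(\lambda' x)^-$ for $x\in\R^d$, with $\lambda,\lambda'$ viewed as row vectors. Note that $h(0)=0$.

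First, Proposition~\ref{P:rock} gives that $g'(0)$ exists and is finite. In particular $0$ lies in the interior of $\dom g$, so $g'_\pm(\lambda x)$ are finite for $x$ in a neighbourhood of $0$. The proposition also yields
\[
g'_\pm(y)=g'(0)+g''(0)y+r_\pm(y),\qquad \frac{r_\pm(y)}{\abs{y}}\to0 \text{ as } y\to0,
\]
where we set $r_\pm(0)=0$.

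Second, substitute $y=\lambda x$ and $z=\lambda' x$. Using $z^+-z^-=z$, this gives
\[
h(x)=g'(0)\,\lambda' x+g''(0)(\lambda x)(\lambda' x)+r_+(\lambda x)z^+-r_-(\lambda x)z^-.
\]
Since $\abs{z}\leq\abs{\lambda'}\abs{x}$ and $\abs{r_\pm(\lambda x)}=o(\abs{\lambda x})=o(\abs{x})$, the remainder terms are $o(\abs{x}^2)$. If $\lambda=0$, the remainders vanish identically. Otherwise $\abs{r_\pm(\lambda x)}\leq \epsilon\abs{\lambda}\abs{x}$ for $x$ small, and this bound is uniform in the direction of $x$.

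Third, identify the quadratic term. We have $(\lambda x)(\lambda' x)=\tfrac12 x^\top(\lambda^\top\lambda'+\lambda'^\top\lambda)x$, and the matrix $\lambda^\top\lambda'+\lambda'^\top\lambda$ is symmetric. Hence
\[
h(x)=h(0)+g'(0)\lambda' x+\tfrac12 x^\top g''(0)(\lambda^\top\lambda'+\lambda'^\top\lambda)x+o(\abs{x}^2),
\]
which is exactly the claim: gradient $g'(0)\lambda'$ and second-order Peano derivative $g''(0)(\lambda^\top\lambda'+\lambda'^\top\lambda)$.

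The only delicate point is the remainder estimate. It needs the expansion in Proposition~\ref{P:rock} to hold for both one-sided derivatives simultaneously, since the sign of $z$ decides which one appears. It also needs the $o(\abs{x})$ bound to be uniform over directions, and this follows because it is a one-dimensional statement in $y=\lambda x$. Everything else is routine algebra.
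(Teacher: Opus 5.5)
Your proof is correct and follows essentially the same route as the paper. Like the paper, you expand $g'_\pm(\lambda x)$ to first order via Proposition~\ref{P:rock}, multiply the $o(\abs{x})$ remainders by $(\lambda' x)^\pm=O(\abs{x})$, and read off the quadratic term $g''(0)(\lambda x)(\lambda' x)$; your remarks on the finiteness of $g'_\pm$ near $0$ and on uniformity over directions make explicit details that the paper leaves implicit.
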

\begin{proof}
Fix $\lambda,\lambda'\in\R^d$. Twice Peano-differentiability of $g$ and Proposition~\ref{P:rock} yield
\begin{align*}
\lim_{x\to0}\frac{g'_+(\lambda x)-g'(0)-g''(0)\lambda x}{\abs{x}}=
    \lim_{x\to0}\frac{g'_-(\lambda x)-g'(0)-g''(0)\lambda x}{\abs{x}}  = 0,
\end{align*}
which in turn gives
\begin{align*}
\lim_{x\to0}\frac{g'_+(\lambda x)-g'(0)-g''(0)\lambda x}{\abs{x}}\frac{(\lambda'x)^+}{\abs{x}} 
=
    \lim_{x\to0}\frac{g'_-(\lambda x)-g'(0)-g''(0)\lambda x}{\abs{x}} \frac{(\lambda'x)^-}{\abs{x}}=  0.
\end{align*}
Letting $f= g'_+(\lambda\id)(\lambda'\id)^+ - g'_-(\lambda\id)(\lambda'\id)^-$, this then yields
\begin{align*}
    \lim_{x\to0}\frac{f-g'(0)\lambda' x-g''(0)(\lambda x)(\lambda' x)}{\abs{x}^2} = 0,
\end{align*}
which shows that $f$ is twice Peano-differentiable at $0$.
\end{proof}

\section{Calculus on L\'evy triplets and related optimization problems}\label{Appx:C}

Subsection~\ref{SS:C.1} of this appendix develops the calculus of universal V-functions acting on L\'evy triplets. The calculus is purely analytic in that it has no $(\omega,t)$ dimension. However, if one were to associate with a given triplet a hypothetical L\'evy process $L$, the action of a compatible universal  V-function $\xi$ on the original triplet might be interpreted as producing the triplet of the corresponding process-level variation $\xi\circ L$. 

Subsection~\ref{SS:C.2} formulates properties of local expected utility reduced to this analytic setting. 

\subsection{Calculus on L\'evy triplets}\label{SS:C.1}

\begin{definition}
Let $m,n\in\N$.
\begin{enumerate}[(1)]
\item
A measure $F$ on $\R^m$ that integrates $\abs{\id}^2\wedge1$ and
satisfies $F(\{0\})=0$ is called a L\'evy measure.

\item
We say that $(b,c,F)$ is an $\R^m$-triplet if $b\in\R^m$,
$c\in\R^{m\times m}$ is positive semi-definite, and $F$ is a
L\'evy measure on $\R^m$.

\item
We say that a Borel-measurable function
$\xi:\mathcal U\to\R^n$ is a universal V-function if
$\mathcal U\subset\R^m$ is Borel and contains an open neighbourhood
of zero, $\xi(0)=0$, and $\xi$ is twice Peano-differentiable at zero. 

\item
We say that a universal V-function $\xi$  is $F$-compatible if
$F((\dom\xi)^c)=0$.

\item
We say that a universal V-function $\xi$ is 
$F$-special if it is $F$-compatible and 
$
\abs{\xi}\indicator{\abs{\xi}>1}
$
integrates $F$.
\end{enumerate}
\end{definition}

From here onwards, we will call a `universal V-function' just V-function.

\begin{lemma}\label{L:B1} Let $F$ be a L\'evy measure on $\R^m$ and $\xi$ an $F$-compatible  V-function. Then the following statements hold.
\begin{enumerate}[(1)]
\item\label{L:B1.1} If $D\xi(0)=0$ and $\xi$ is bounded (on $\dom\xi$), then $\xi$ integrates $F$.
\item\label{L:B1.2} If $D\xi(0)=0$, then $\xi$ integrates $F$ if and only if $\abs{\xi}\indicator{\abs{\xi}>K}$ integrates $F$ for some (equivalently any) $K>0$.
\item\label{L:B1.3} The image measure of $F$ via $\xi$, with its mass at zero removed, is a L\'evy measure.
\end{enumerate}
\end{lemma}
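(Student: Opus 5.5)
The plan is to rest everything on one Taylor estimate at the origin. Since $\xi(0)=0$ and $\xi$ is twice Peano-differentiable at zero, we can write $\xi(x)=D\xi(0)x+\tfrac12 x^\top D^2\xi(0)x+o(\abs{x}^2)$. In items \ref{L:B1.1} and \ref{L:B1.2} we have $D\xi(0)=0$, so there exist $\epsilon>0$ and $C<\infty$ with $\abs{\xi(x)}\le C\abs{x}^2$ for $\abs{x}\le\epsilon$. Compatibility ensures that $\xi$ is defined $F$-almost everywhere. We may shrink $\epsilon$ so that the ball of radius $\epsilon$ lies inside the open neighbourhood of zero contained in $\dom\xi$.

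For \ref{L:B1.1}, split $\R^m$ into $\{\abs{\id}\le\epsilon\}$ and its complement. On the ball, $\abs{\xi}\le C\abs{\id}^2\le C(\abs{\id}^2\wedge1)$ once $\epsilon\le1$. Off the ball, $\abs{\xi}\le\sup\abs{\xi}$, and $F(\{\abs{\id}>\epsilon\})\le\epsilon^{-2}\int(\abs{\id}^2\wedge1)\,\d F<\infty$ because $F$ is a L\'evy measure. Both pieces are therefore $F$-integrable.

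For \ref{L:B1.2}, the forward direction is immediate. Conversely, fix $K>0$ and write $\xi=\xi\indicator{\abs{\xi}\le K}+\xi\indicator{\abs{\xi}>K}$. The second term is integrable by assumption. For the first, I would like to apply \ref{L:B1.1}, but $\xi\indicator{\abs{\xi}\le K}$ need not be Peano-differentiable in the required form. However, near zero it coincides with $\xi$ as soon as $C\epsilon^2<K$. So I would repeat the argument of \ref{L:B1.1} directly: bound it by $C\abs{\id}^2$ on the small ball and by $K$ on the complement, which has finite $F$-mass. The phrase ``equivalently any $K$'' follows from the fact that integrability of $\xi$ implies integrability of every $\abs{\xi}\indicator{\abs{\xi}>K}$.

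For \ref{L:B1.3}, let $G$ be the image of $F$ under $\xi$, restricted to $\R^n\setminus\{0\}$. We need $\int(\abs{y}^2\wedge1)\,G(\d y)=\int(\abs{\xi}^2\wedge1)\indicator{\xi\ne0}\,\d F<\infty$. Peano differentiability gives $\abs{\xi(x)}\le C'\abs{x}$ near zero, with $C'=\abs{D\xi(0)}+1$ say. Hence $\abs{\xi}^2\wedge1\le C'^2\abs{\id}^2$ on a small ball, while on the complement it is bounded by $1$ on a set of finite $F$-mass. Measurability of $\xi$ on the Borel set $\dom\xi$ makes the pushforward well defined, and removing the atom at zero guarantees $G(\{0\})=0$.

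The main obstacle is the small technical point in \ref{L:B1.2} that truncation breaks the V-function structure. This is handled by arguing directly with the local quadratic bound rather than invoking \ref{L:B1.1} as stated.
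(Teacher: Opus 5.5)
Your proposal is correct and follows essentially the paper's proof: a local bound $\abs{\xi}\leq C\abs{\id}^2$ (respectively $C\abs{\id}$ for part (3)) on a small ball, combined with finiteness of $F$ off that ball; the only difference is that the paper proves (2) and deduces (1) from it, whereas you go the other way. The ``obstacle'' you flag in (2) is not real: $\xi$ is continuous at zero, so $\xi\indicator{\abs{\xi}\leq K}$ coincides with $\xi$ on a neighbourhood of zero and is itself a bounded V-function with vanishing derivative, which means (1) applies to it directly.
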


\begin{proof}
For \ref{L:B1.1} and \ref{L:B1.2}, suppose that $D\xi(0)=0$. Since $\xi$ is twice Peano-differentiable at zero, there exist $\varepsilon>0$ and $C<\infty$ such that
\[
\abs{\xi(x)}\leq C\abs{x}^2,\qquad \abs{x}\leq\varepsilon.
\]
Moreover, since $F$ is a L\'evy measure,
\[
\int_{\abs{x}\leq\varepsilon}\abs{x}^2F(\d x)<\infty
\qquad\text{and}\qquad
F(\{\abs{x}>\varepsilon\})<\infty.
\]

Note that \ref{L:B1.2} implies \ref{L:B1.1}.
For \ref{L:B1.2}, fix $K>0$. Since $\xi$ is continuous at zero, after decreasing $\varepsilon$ if necessary we may assume that $\abs{\xi(x)}\leq K$ whenever $\abs{x}\leq\varepsilon$. Hence
\[
\int_{\R^m}
\abs{\xi(x)}
\indicator{\abs{\xi(x)}\leq K}
F(\d x)
\leq
C\int_{\abs{x}\leq\varepsilon}\abs{x}^2F(\d x)
+
K F(\{\abs{x}>\varepsilon\})
<\infty.
\]
Thus $\xi$ integrates $F$ if and only if
$\abs{\xi}\indicator{\abs{\xi}>K}$ integrates $F$. Since this holds for every $K>0$, ``some'' and ``any'' are equivalent.

For \ref{L:B1.3}, differentiability of $\xi$ at zero and $\xi(0)=0$ yield, possibly after changing $\varepsilon$ and $C$,
\[
\abs{\xi(x)}\leq C\abs{x},\qquad \abs{x}\leq\varepsilon.
\]
Let $F^\xi$ denote the image measure of $F$ via $\xi$, with mass at zero removed. Then $F^\xi(\{0\})=0$ and
\[
\int_{\R^n}(\abs{y}^2\wedge1)F^\xi(\d y)
=
\int_{\R^m}(\abs{\xi(x)}^2\wedge1)F(\d x)
\leq
C^2\int_{\abs{x}\leq\varepsilon}\abs{x}^2F(\d x)
+
F(\{\abs{x}>\varepsilon\})
<\infty,
\]
and hence $F^\xi$ is a L\'evy measure.
\end{proof}

\begin{definition} Let $m,n\in\N$ and fix an $\R^m$-triplet $(b^{\tr},c,F)$. 
\begin{enumerate}[(1)]
\item
If $\xi$ is a V-function on $\R^m$ with values in $\R^n$, we let 
\begin{align*} 
c^\xi&{} = D\xi(0)c(D\xi(0))^\top,\\
F^\xi &{}\text{ is the image measure of $F$ via $\xi$  with its mass at zero removed.} 
\end{align*}
If, furthermore, $\xi$ is $F$-special, we let
\begin{equation}\label{eq:bxi}
b^{\xi} = D\xi(0)b^{\tr}+\frac{1}{2}\trace(D^2\xi(0)c)+\int_{\R^m} \bigs(\xi(x)-D\xi(0) \tr(x)\bigs)F(\d x),
\end{equation}
which is finite by Lemma~\ref{L:B1}\ref{L:B1.2} applied to $\xi-D\xi(0)\tr$.
\item If $\xi$ is a V-function and compatible with $F$, we write 
\[(b^{\tr(\xi)},c^\xi,F^\xi) = (b^{\tr},c,F)^\xi.\]
\end{enumerate}
\end{definition}
 
Observe that the notation $b^{\tr}$ is consistent with \eqref{eq:bxi}: since $\tr$ in \eqref{tr} is twice Peano-differentiable, $D\tr(0)$ is an identity matrix and $D^2\tr(0)=0$. However, since $\tr$ in \eqref{tr} is twice Peano-differentiable, $D\tr(0)$ is an identity matrix, and $D^2\tr(0)=0$, formula~\eqref{eq:bxi} with $\xi=\tr$ is self-consistent.

\begin{lemma}\label{L:B2} Let $m,n\in\N$ and fix an $\R^m$-triplet $(b^{\tr},c,F)$.
\begin{enumerate}[(1)] 
\item\label{L:B2.1} If $\xi$ is a V-function on $\R^m$ with values in $\R^n$, and  $F((\dom \xi)^c)=0$, then 
$(b^{\tr(\xi)},c^\xi,F^\xi)$ is an $\R^n$-triplet. 
\item\label{L:B2.2} If, additionally, $\psi$ is a V-function on $\R^n$, then the following are equivalent.    
   \begin{enumerate}[(i)]
     \item $F^\xi((\dom \psi)^c)=0$.
     \item $F((\dom \psi(\xi))^c)=0$.
   \end{enumerate}
Furthermore, if either of these conditions holds, then  
\[(b^{\tr(\xi)},c^\xi,F^\xi)^\psi = (b^{\tr},c,F)^{\psi(\xi)}. \]
If, additionally, $\abs{\psi(\xi)}\indicator{\abs{\psi(\xi)}>1}$ integrates $F$, then
\begin{equation} \label{eq:260920}
b^{\psi(\xi)} = D\psi(0)b^{\tr(\xi)}+\frac{1}{2}\trace(D^2\psi(0)c^\xi)+\int_{\R^n} \bigs(\psi(x)-D\psi(0) \tr(x)\bigs)F^\xi(\d x).
\end{equation}
\item\label{L:B2.3}
Let $\xi,\eta$ be real-valued V-functions on a common domain with
$F((\dom\xi)^c)=0$.
If $b^\xi$ and $b^\eta$ are defined, then, for $a_1,a_2\in\R$,
\[
b^{a_1\xi+a_2\eta}
=
a_1b^\xi+a_2b^\eta.
\]
If $\eta\geq0$ and $b^\eta$ is defined, then every real-valued
V-function $\xi$ satisfying
$
\abs{\xi}\leq\eta$
has $b^\xi$ defined and
\[
\abs{b^\xi}\leq b^\eta.
\]
\end{enumerate}
\end{lemma}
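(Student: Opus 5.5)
The plan is to treat the three parts of Lemma~\ref{L:B2} in order. Everything reduces to formula \eqref{eq:bxi}, the chain rule for Peano derivatives at zero, and the change-of-variables formula for image measures.

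For \ref{L:B2.1}, I check the three defining properties of a triplet directly. Since $D\tr(0)$ is the identity, $b^{\tr(\xi)}$ is given by \eqref{eq:bxi} with $\tr(\xi)$ in place of $\xi$. Its integrand $\tr(\xi)-D\xi(0)\tr$ has zero derivative at zero and is bounded on $\dom\xi$ off a neighbourhood of zero: $\tr(\xi)$ is bounded, and $\tr$ is bounded. Lemma~\ref{L:B1}\ref{L:B1.1}, applied after subtracting the $F$-integrable bounded piece $D\xi(0)\tr\indicator{\abs{\id}>\varepsilon}$, shows that this integrand integrates $F$, so $b^{\tr(\xi)}$ is finite. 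Next, $c^\xi=D\xi(0)c(D\xi(0))^\top$ is positive semi-definite. Finally, $F^\xi$ is a L\'evy measure by Lemma~\ref{L:B1}\ref{L:B1.3}.

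For \ref{L:B2.2}, the equivalence of (i) and (ii) is immediate. We have $F^\xi((\dom\psi)^c)=F(\{x:\xi(x)\notin\dom\psi,\ \xi(x)\neq0\})$, and the removed point $0$ lies in $\dom\psi$ because $\dom\psi$ contains a neighbourhood of zero. So both quantities equal $F((\dom\psi(\xi))^c)$, up to the $F$-null set $(\dom\xi)^c$.

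For the identity of triplets, I proceed in three steps.
1. Since $\psi$ and $\xi$ are twice Peano-differentiable at zero, so is $\psi(\xi)$, with
   \[D(\psi\circ\xi)(0)=D\psi(0)D\xi(0),\qquad D^2(\psi\circ\xi)(0)=D\xi(0)^\top D^2\psi(0)D\xi(0)+\sum_i\partial_i\psi(0)D^2\xi_i(0).\]
   This is the Peano chain rule, obtained by composing the second-order expansions.
2. The $c$-components agree by the first formula, and the image measure of an image measure is the image under the composition. Removing the mass at zero twice is harmless, because $F^\xi$ has no atom at zero.
3. The drift identity is the main computation. I expand $b^{\tr(\psi)}$ of the triplet $(b^{\tr(\xi)},c^\xi,F^\xi)$ via \eqref{eq:bxi}, insert the formula for $b^{\tr(\xi)}$, and change variables in the $F^\xi$ integral. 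This produces
   \[D\psi(0)D\xi(0)b^{\tr}+\tfrac12\trace\bigl(D^2\psi(0)c^\xi\bigr)+\tfrac12\sum_i\partial_i\psi(0)\trace\bigl(D^2\xi_i(0)c\bigr)+\int\bigl(\tr(\psi(\xi))-D\psi(0)D\xi(0)\tr\bigr)\,\d F.\]
   The two trace terms combine into $\frac12\trace(D^2(\psi\circ\xi)(0)c)$ by the chain rule and cyclicity of the trace.

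The delicate point, and the step I expect to be the main obstacle, is splitting the integral $\int(\tr(\psi(\xi))-D\psi(0)\tr(\xi))\,\d F+\int D\psi(0)(\tr(\xi)-D\xi(0)\tr)\,\d F$ legitimately. Each piece must be separately $F$-integrable. Each integrand has vanishing first derivative at zero, is Peano-quadratic near zero, and is bounded away from zero, since all terms involve $\tr$. Lemma~\ref{L:B1}\ref{L:B1.1}, after localising as in \ref{L:B2.1}, gives integrability, so the split is valid. The same argument, with $\psi$ in place of $\tr(\psi)$, yields \eqref{eq:260920} under the integrability assumption on $\abs{\psi(\xi)}\indicator{\abs{\psi(\xi)}>1}$, using Lemma~\ref{L:B1}\ref{L:B1.2} instead of \ref{L:B1.1}.

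For \ref{L:B2.3}, linearity follows because every term of \eqref{eq:bxi} is linear in $\xi$. Peano derivatives are linear, and the integrands $\xi-D\xi(0)\tr$ and $\eta-D\eta(0)\tr$ are integrable, so their linear combination is too. Also, $a_1\xi+a_2\eta$ is $F$-special, since its large values are controlled by those of $\xi$ and $\eta$ via Lemma~\ref{L:B1}\ref{L:B1.2}.

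For the domination claim, $\abs{\xi}\leq\eta$ with $\eta\geq0$ and $\eta(0)=0$ forces $D\eta(0)=0$, hence $D\xi(0)=0$. It also gives $\abs{x^\top D^2\xi(0)x}\leq x^\top D^2\eta(0)x$, so that $\abs{\trace(D^2\xi(0)c)}\leq\trace(D^2\eta(0)c)$ by diagonalising $c$. Moreover, $\abs{\xi}\indicator{\abs{\xi}>1}\leq\eta\indicator{\eta>1}$, so $\xi$ is $F$-special. Then \eqref{eq:bxi} reduces to
\[b^\xi=\tfrac12\trace\bigl(D^2\xi(0)c\bigr)+\int\xi\,\d F,\]
and the triangle inequality gives $\abs{b^\xi}\leq b^\eta$.
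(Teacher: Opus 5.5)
Your proposal is correct. Parts \ref{L:B2.1} and \ref{L:B2.2} follow the paper's proof essentially step for step: the Peano chain rule, cyclicity of the trace, and change of variables for image measures.

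Three small remarks on those parts. First, the split you flag as delicate is automatic. Both $\int(\tr(\psi)-D\psi(0)\tr)\,\d F^\xi$ and $\int(\tr(\xi)-D\xi(0)\tr)\,\d F$ are finite by the argument of part \ref{L:B2.1}, since $\tr(\xi)-D\xi(0)\tr$ is already bounded on $\dom\xi$. Second, the double removal of mass at zero is harmless because $\psi(0)=0$, so that $\{\psi(\xi)\neq0\}\subset\{\xi\neq0\}$. It is not because $F^\xi$ has no atom at zero. Third, for \eqref{eq:260920} the paper does not rerun the computation. It adds $b^{\psi(\xi)}-b^{\tr(\psi(\xi))}=\int(\psi-\tr(\psi))\,\d F^\xi$ to the drift identity already proved, but your rerun via Lemma~\ref{L:B1}\ref{L:B1.2} also works.

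The genuine difference is in the domination claim of part \ref{L:B2.3}. The paper first proves order preservation. If $\xi\leq\eta$, then $\eta-\xi$ is minimised at zero, so $D(\eta-\xi)(0)=0$ and $D^2(\eta-\xi)(0)$ is positive semi-definite. Linearity and \eqref{eq:bxi} then give $b^\eta-b^\xi\geq0$, and the paper applies this to $\pm\xi\leq\eta$.

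You instead use $\eta\geq0$ to force $D\eta(0)=D\xi(0)=0$. You then compare the Hessians as quadratic forms, pass to traces by diagonalising $c$, and finish with the triangle inequality for $\int\xi\,\d F$.

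Both arguments are rigorous. The paper's is shorter and yields $\xi\leq\eta\Rightarrow b^\xi\leq b^\eta$, with no sign condition on $\eta$, as a by-product. Yours makes the term-by-term mechanism in \eqref{eq:bxi} explicit. Nothing is lost either way: order preservation also follows from your version by applying the bound to the nonnegative function $\eta-\xi$ together with the zero function.
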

\begin{proof}
For \ref{L:B2.1}, the positive semidefiniteness of $c^\xi$ is evident. Since $\tr(\xi)$ is bounded on $\dom\xi$, $b^{\tr(\xi)}$ is finite by Lemma~\ref{L:B1}\ref{L:B1.2} (applied to $\tr(\xi)-D\xi(0)\tr$). The same lemma yields that $F^\xi$ is a L\'evy measure. Thus $(b^{\tr(\xi)},c^\xi,F^\xi)$ is an $\R^n$-triplet.

For \ref{L:B2.2}, the equivalence follows directly from the definition of $F^\xi$. Since $(b^{\tr(\xi)},c^\xi,F^\xi)$ is an $\R^n$-triplet and $F^\xi((\dom\psi)^c)=0$, the operation $(b^{\tr(\xi)},c^\xi,F^\xi)^\psi$ is well defined. Observe that $\eta=\psi(\xi)$ is twice Peano-differentiable at zero with 
\begin{align} 
D\eta(0) {}&=  D\psi(0)D\xi(0), \label{eq:Deta}\\
D^2\eta(0) {}&=   (D\xi(0))^\top  D^2 \psi(0)D\xi(0)  + \sum_{k = 1}^{n} D_k\psi(0)  D^2\xi^{(k)}(0). \label{eq:D2eta}
\end{align} 
The first component of $(b^{\tr(\xi)},c^\xi,F^\xi)^\psi$ reads 
\begin{align*}
    D\psi(0)&{}b^{\tr(\xi)}+\frac{1}{2}\trace(D^2\psi(0)c^\xi)+\int_{\R^n} \bigs(\tr(\psi(x))-D\psi(0) \tr(x)\bigs)F^\xi(\d x)\\
    ={}& D\psi(0)\left(D\xi(0)b^{\tr}+\frac{1}{2}\trace(D^2\xi(0)c)+\int_{\R^m} \bigs(\tr(\xi(x))-D\xi(0) \tr(x)\bigs)F(\d x)\right)\\
    &{}+\frac{1}{2}\trace(D^2\psi(0)D\xi(0)c(D\xi(0))^\top)\\
    &{}+\int_{\R^m} \bigs(\tr(\psi(\xi(x)))-D\psi(0) \tr(\xi(x))\bigs)F(\d x)\\
    ={}& D\eta(0)b^{\tr}+\frac12\trace(D^2\eta(0)c)+\int_{\R^m}\left(\tr(\eta(x))-D\eta(0)\tr(x)\right)F(\d x)\\
    ={}&b^{\tr(\eta)}.
\end{align*}  
Here we have used \eqref{eq:Deta}, \eqref{eq:D2eta}, $D\tr(\varphi)(0)=D\varphi(0)$ for $\varphi\in\{\xi,\psi,\eta\}$, $\trace(AB)=\trace(BA)$ for compatible matrices, and properties of image measures. The identities for the second and third component of $(b^{\tr(\xi)},c^\xi,F^\xi)^\psi$ follow easily. The identity in \eqref{eq:260920} follows from 
\[b^\eta-b^{\tr(\eta)}= \int_{\R^m}(\eta-\tr(\eta))F(\d x) = \int_{\R^n}(\psi-\tr(\psi))F^\xi(\d x).\]

For \ref{L:B2.3}, linearity follows directly from
\eqref{eq:bxi}. If $\xi\leq\eta$, then $\eta-\xi$ has a minimum at
zero. Hence
\[
D(\eta-\xi)(0)=0
\qquad\text{and}\qquad
D^2(\eta-\xi)(0)
\]
is positive semi-definite. Therefore, by \eqref{eq:bxi},
\[
b^\eta-b^\xi
=
\frac{1}{2}\trace\left(D^2(\eta-\xi)(0)c\right)
+
\int_{\R^m}(\eta-\xi)(x)F(\d x)
\geq0.
\]
Finally, if $\abs{\xi}\leq\eta$ and $b^\eta$ is defined, then
$\abs{\xi}\indicator{\abs{\xi}>1}$ integrates $F$, so $b^\xi$ is
defined. Applying the preceding inequality to
$\xi\leq\eta$ and $-\xi\leq\eta$ gives
$
\abs{b^\xi}\leq b^\eta$.
\end{proof}

\subsection{Existence of optimizers of certain concave functions}\label{SS:C.2}

Fix a concave non-decreasing V-function $g$ on $\R$. For $H>0$ and $K>0$, the V-function
\[g_{H,K}=(-H\vee g)\wedge K\] 
is bounded on $\R$. For an $\R^d$-triplet $(b^{\tr},c,F)$, define the concave function $\gfrak:\R^d\to[-\infty,\infty]$ by setting
\[\gfrak(\lambda) = \lim_{K\uparrow\infty}\lim_{H\uparrow\infty}
b^{g_{H,K}(\lambda\id)}. \]
Observe that explicitly one has 
\[
\gfrak(\lambda)
=
g'(0)b^{\tr}\lambda
+\frac{1}{2}g''(0)\lambda c\lambda^\top
+\int_{\R^d}
\bigl(g(\lambda x)-g'(0)\lambda\tr(x)\bigr)F(\d x),
\]
with the same extended integral convention as in \eqref{eq:locutil}.

\begin{lemma}\label{L:260428Granada}
If $g$ is upper semi-continuous and $\gfrak<\infty$, then  $\gfrak$ is upper semi-continuous.
\end{lemma}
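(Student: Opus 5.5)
To prove Lemma~\ref{L:260428Granada}, I plan to write $\gfrak$ as the pointwise infimum of a family of continuous functions, which is automatically upper semi-continuous. The starting point is the representation $\gfrak(\lambda)=\lim_{K\uparrow\infty}\lim_{H\uparrow\infty}b^{g_{H,K}(\lambda\id)}$. Since $g$ is concave and twice Peano-differentiable at $0$, I first split off a quadratic part near zero. I fix $\varepsilon>0$ with $[-2\varepsilon,2\varepsilon]\subset\operatorname{int}\dom g$ and write
\[
\gfrak(\lambda)=g'(0)b^{\tr}\lambda+\tfrac12 g''(0)\lambda c\lambda^\top
+\int_{\R^d}\bigl(g(\lambda x)-g'(0)\lambda x\bigr)\indicator{\abs{x}\le r}F(\d x)
+\int_{\R^d}\bigl(g(\lambda x)-g'(0)\lambda\tr(x)\indicator{\abs{x}\le r}\bigr)\indicator{\abs{x}> r}F(\d x),
\]
with $r\le1$ to be chosen depending on a compact neighbourhood of the point $\lambda_0$ under consideration.

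The first two terms are continuous in $\lambda$. For the small-jump integral I restrict to $\abs{\lambda}\le M$ and take $r=\varepsilon/M$. Then $\lambda x\in[-\varepsilon,\varepsilon]$, and concavity together with twice Peano-differentiability gives $\abs{g(y)-g'(0)y}\le Cy^2$ on $[-\varepsilon,\varepsilon]$. The integrand is therefore dominated by $CM^2\abs{x}^2\indicator{\abs{x}\le r}$, which is $F$-integrable. Since $g$ is continuous on $\operatorname{int}\dom g$, dominated convergence shows this term is continuous on $\{\abs{\lambda}\le M\}$.

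What remains is the large-jump term, taken over a finite measure $F\indicator{\abs{x}>r}$. There the linear correction $g'(0)\lambda\tr(x)$ is bounded and continuous in $\lambda$, so it suffices to show that $\lambda\mapsto\int g(\lambda x)\indicator{\abs{x}>r}F(\d x)$ is upper semi-continuous with the paper's integral convention. This is the step I expect to be the main obstacle, because the positive part may not be dominated uniformly in $\lambda$, and Fatou's lemma in its reverse form needs an integrable upper bound.

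My plan is to use $\gfrak<\infty$ to obtain such a bound. Since $g$ is non-decreasing and concave, for every $y$ we have $g(y)^+\le g(\abs{y})^+$. Moreover, for $\abs{\lambda}\le M$ we have $\abs{\lambda x}\le M\abs{x}$, so $g(\lambda x)^+\le g(M\abs{x})^+$. It is therefore enough to show that $g(M\abs{\id})^+\indicator{\abs{\id}>r}$ integrates $F$. I intend to deduce this from the finiteness of $\gfrak$ at the finitely many points $\pm Me_i$, suitably scaled. Concavity gives $g(M\abs{x})\le \frac{1}{d}\sum_{i}\bigl(g(dMx_i)+g(-dMx_i)\bigr)^{+}$-type estimates, using $\abs{x}\le\sum_i\abs{x_i}$ and monotonicity. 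Finiteness of $\gfrak(\pm dMe_i)$ then gives integrability of the positive parts, because $\gfrak(\lambda)<\infty$ forces $\int g(\lambda x)^+\indicator{\abs x>r}F(\d x)<\infty$ whenever the negative part is finite. If the negative part at those points is $-\infty$, I would instead use only the positive-part finiteness coming from the convention. A cleaner route I would try first is to note that $g^+$ is itself concave on $[0,\infty)$ and subadditive, which lets me reduce to coordinate directions directly.

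With this integrable majorant in hand, upper semi-continuity of $g$ and reverse Fatou give, for $\lambda_n\to\lambda_0$,
\[
\limsup_n\int g(\lambda_n x)\indicator{\abs x>r}F(\d x)\le\int g(\lambda_0 x)\indicator{\abs x>r}F(\d x),
\]
and this holds also when the right-hand side equals $-\infty$. Summing the continuous parts with this upper semi-continuous part completes the proof.
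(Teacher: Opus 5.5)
Your overall plan matches the paper's: treat the small jumps separately, then apply reverse Fatou to the large jumps using an integrable majorant. Your small-jump argument is fine. The paper does that step more cheaply, via Fatou's lemma and $g-g'(0)\id\le0$, with no need to tie $r$ to $M$.

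\textbf{The gap is the majorant.} The function $g(M\abs{\id})^+\indicator{\abs{\id}>r}$ is in general \emph{not} $F$-integrable under the hypotheses of the lemma. Your fallback also misreads the integral convention: $\gfrak(\lambda)=-\infty$ as soon as the negative part diverges, even if the positive part is $+\infty$. So $\gfrak<\infty$ says nothing about positive parts at points outside $\dom\gfrak$, and nothing forces $\pm Me_i$ (or $\pm dMe_i$) to lie in $\dom\gfrak$.

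For a concrete counterexample, take $d=1$, $g(y)=1-\e^{-y}$ for $y\le0$, $g(y)=\log(1+y)$ for $y>0$, and $F(\d x)=\e^{-\sqrt{\abs{x}}}\indicator{x<-1}\d x+x^{-1}(\log x)^{-2}\indicator{x>\e}\d x$. This $g$ is a local utility function with $g''(0)=-1$. For $\lambda>0$ the exponential branch of $g$ meets the left tail of $F$, and for $\lambda<0$ it meets the right tail; either way the negative part diverges. Hence $\gfrak=-\infty$ off the origin, and the lemma holds trivially. Yet
$\int g(M\abs{x})^+\indicator{\abs{x}>r}F(\d x)\ge\int_{\e}^{\infty}\log(1+Mx)\,x^{-1}(\log x)^{-2}\,\d x=\infty$ for every $M>0$.

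Two smaller slips:
\begin{itemize}
\item The bound $\tfrac1d\sum_i\bigl(g(dMx_i)+g(-dMx_i)\bigr)^+$ is identically zero, because $g(y)+g(-y)\le0$.
\item The factor $\indicator{\abs{x}\le r}$ in your large-jump integrand annihilates the term $g'(0)\lambda\tr(x)$ that you later rely on.
\end{itemize}

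\textbf{The missing idea} is that you only need a majorant along sequences $(\mu_k)$ inside $\dom\gfrak$. If $\mu_k\notin\dom\gfrak$, then $\gfrak(\mu_k)=-\infty$, which cannot affect the $\limsup$. So, as the paper does, pick linearly independent $\lambda_1,\dots,\lambda_m\in\dom\gfrak$ whose span contains $\dom\gfrak$. The argument then runs as follows.
\begin{itemize}
\item At these points both the positive and the negative part are finite.
\item The inequality $g(-y)\le g(y)^-$ (from $g(y)+g(-y)\le0$) gives $g(\abs{y})\le\abs{g(y)}$. Hence $\int g(\abs{\lambda_i x})\indicator{\abs{x}>r}F(\d x)<\infty$.
\item Monotonicity and subadditivity of $g^+$ on $[0,\infty)$ give $g(\lambda x)^+\le\max(\delta,1)\sum_i g(\abs{\lambda_i x})$ for every $\lambda=\sum_i\alpha_i\lambda_i$ with $\abs{\alpha}<\delta$. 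Such sets cover any bounded subset of $\dom\gfrak$.
\item Reverse Fatou then finishes as you intended.
\end{itemize}
Coordinate directions cannot substitute for this basis, since $\dom\gfrak$ may be lower-dimensional or, as in the example above, equal to $\{0\}$.
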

\begin{proof}
Let $\widehat F$ be the restriction of $F$ to
$\{\abs{x}>1\}$. Observe that $\widehat F(\R^d)<\infty$.
This leads to a natural decomposition
$\gfrak=\widebar\gfrak+\widehat\gfrak$ with
\begin{align*}
\widebar\gfrak(\lambda)
={}&
g'(0)b^{\id\indicator{\abs{\id}\leq1}}\lambda
+\frac{1}{2}g''(0)\lambda c\lambda^\top
+\int_{\R^d}
\bigs(g(\lambda x)-g'(0)\lambda x\bigs)
\indicator{\abs{x}\leq1}F(\d x),\\
\widehat\gfrak(\lambda)
={}&
\int_{\R^d}g(\lambda x)\widehat F(\d x).
\end{align*}

Since $g-g'(0)\id\leq0$, Fatou's lemma yields upper
semi-continuity of $\widebar\gfrak$. Furthermore, if $g^-$ is
finite-valued, then $\dom\widebar\gfrak=\R^d$, and otherwise
$\lambda\in\dom\widebar\gfrak$ for all $\lambda$ with
$\abs{\lambda}<\abs{\inf\dom g}$. Hence the conic hull of
$\dom\widebar\gfrak$ spans the whole of $\R^d$ in either case.

We next show that $\widehat\gfrak$ is upper semi-continuous.
Denote by $C$ the cone generated by $\dom\gfrak$, which by the
previous argument coincides with the cone generated by
$\dom\widehat\gfrak$. For $n=\dim C<1$, there is nothing to prove
since then $\dom\widehat\gfrak=\{0\}$ is a singleton. For $n\geq1$,
there are $n=\dim C$ linearly independent vectors
$\lambda_1,\ldots,\lambda_n\in\dom\widehat\gfrak$ whose linear span
contains $\dom\widehat\gfrak$.

Since $\gfrak<\infty$, we have
\[
\int_{\R^d}g(\lambda_i x)^+\widehat F(\d x)<\infty.
\]
In view of $\lambda_i\in\dom\widehat\gfrak$ and
$g(-\id)^-\geq g(\id)^+$, this gives
\[
\int_{\R^d}g(\abs{\lambda_i x})\widehat F(\d x)<\infty,
\qquad i\in\{1,\ldots,n\}.
\]

For every bounded subset $A\subset\dom\widehat\gfrak$, there is
$\delta>0$ such that
\[
A\subset
\left\{
\sum_{i=1}^n\alpha_i\lambda_i:
\alpha\in\R^n,\abs{\alpha}<\delta
\right\}.
\]
By monotonicity and subadditivity of $g^+$, we have
\[
\sup_{\lambda\in A}
g\left(\sum_{i=1}^n\alpha_i\lambda_i\id\right)^+
\leq
\sum_{i=1}^n
\max(\abs{\alpha_i},1)g(\abs{\lambda_i\id})
\leq
\max(\delta,1)
\sum_{i=1}^n g(\abs{\lambda_i\id}).
\]
It follows that
$\{g(\lambda\id)^+:\lambda\in A\}$ admits an integrable upper
bound with respect to $\widehat F$. Thus every convergent sequence
in $\dom\widehat\gfrak$ admits an integrable upper bound, and the
reverse Fatou lemma yields that $\widehat\gfrak$ is upper
semi-continuous. Hence $\gfrak$ is upper semi-continuous.
\end{proof}

We next study the asymptotic slope of function $\gfrak$ in one dimension. We shall later apply Lemma~\ref{L:260422Granada} to the $\R$-triplet $(b^{\tr},c,F)^{\lambda\id}$ with $\lambda\in\R^d$.  
\begin{lemma}\label{L:260422Granada}
Let $(b^{\tr},c,F)$ be an $\R$-triplet. Assuming $\int g(x)\indicator{1<x}F(\d x) <\infty$, the function $\psi:[0,\infty)\to [-\infty,\infty)$ is well defined by 
\begin{equation}\label{eq:psi} 
\psi(y)= g'(0)b^{\tr}y + \frac{1}{2}g''(0)c y^2 +\int_\R \left(g(xy)-g'(0)\tr(x)y\right)F(\d x),\quad y\geq0,
\end{equation} 
$\int_\R g'(\infty) x\indicator{1<x}F(\d x) < \infty$, and
\begin{equation}\label{eq:230718}
\begin{split}
 \infty>\lim_{y\to\infty}\frac{\psi(y)}{y} ={}& g'(0)b^{\tr}-\infty\indicator{g''(0)c<0}\\
 &{}+\int_\R\left(g'(\infty)x-g'(0)\tr(x)\right)\indicator{0<x}F(\d x)\\
 &{}+\int_\R\left(g'(-\infty)x-g'(0)\tr(x)\right)\indicator{x<0}F(\d x).
\end{split}
\end{equation}
\end{lemma}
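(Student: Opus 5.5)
The plan is to split $\psi(y)/y$ into a drift/diffusion part, a small-jump part ($\abs{x}\le 1$) and a large-jump part, and then pass to the limit in each by monotone convergence, using concavity and monotonicity of $g$. The drift contributes $g'(0)b^{\tr}$. The diffusion contributes $\frac12 g''(0)cy$, which tends to $-\infty$ when $g''(0)c<0$ and vanishes otherwise.

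\textbf{Well-definedness.} Since $g$ is concave, $g(z)\le g'(0)z$. Hence the integrand $g(xy)-g'(0)\tr(x)y$ is at most $0$ on $\{\abs{x}\le1\}$, and at most $g(xy)$ on $\{\abs{x}>1\}$. On $\{x<-1\}$ we have $g(xy)\le 0$. On $\{x>1\}$, monotonicity and concavity with $g(0)=0$ give $g(xy)\le \max(y,1)\,g(x)$, because $g(tx)\le t\,g(x)$ for $t\ge1$ when $g(x)\ge0$. By the standing assumption this is integrable. So the positive part of the integrand is integrable, and $\psi$ is well defined with values in $[-\infty,\infty)$.

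\textbf{Slope.} For fixed $x$, the map $y\mapsto (g(xy)-g'(0)\tr(x)y)/y$ is non-increasing on $(0,\infty)$. This holds because $g(xy)/y$ is non-increasing for concave $g$ with $g(0)=0$. Its limit as $y\to\infty$ is $g'(\infty)x-g'(0)\tr(x)$ for $x>0$ and $g'(-\infty)x-g'(0)\tr(x)$ for $x<0$, where $g'(-\infty)=\infty$ if $\inf\dom g>-\infty$ (under the convention $\infty\cdot$negative $=-\infty$). Since the integrands decrease in $y$ and are bounded above at $y=1$ by an $F$-integrable function, the monotone convergence theorem (for decreasing sequences) applies. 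It shows that the integral term divided by $y$ converges to the integral of the limits.

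\textbf{Finiteness of $\int g'(\infty)x\indicator{1<x}F(\d x)$.} We have $g'(\infty)x\le g(x)$ for $x>0$, since $g(x)\ge g'(\infty)x$ by concavity and $g(0)=0$. Integrability of $g\indicator{1<x}$ then gives this finiteness. It also makes the $x>0$ limit integral meaningful and keeps the right-hand side of \eqref{eq:230718} below $+\infty$. On $\{0<x\le1\}$, the limit integrand $(g'(\infty)-g'(0))x$ is nonpositive, so its integral lies in $[-\infty,0]$. The $x<0$ term is also in $[-\infty,\ldots)$, being bounded above via the $y=1$ bound. Adding the pieces yields \eqref{eq:230718}.

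\textbf{Main obstacle.} The delicate point is the order of limits and the extended-value bookkeeping. Near $0$, the integrand divided by $y$ for $\abs{x}\le1$ is only known to be nonpositive, and it need not be integrable for finite $y$ when $F$ has infinite activity. Monotone convergence from above still applies, because the $y=1$ value is bounded above by $0$ there, which is integrable. A second point is that when $\inf\dom g>-\infty$ and $F$ charges $\{x<0\}$, $\psi(y)=-\infty$ for large $y$. The formula must then read $-\infty$, which is consistent with $g'(-\infty)=\infty$.
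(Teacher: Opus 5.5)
Your proof is correct and follows essentially the paper's argument. The bounds $g\le g'(0)\id$ and $g(xy)\le (y\vee1)g(x)$ on $\{x>1\}$ give well-definedness, and the monotone chord slopes $y^{-1}g(xy)$ with decreasing monotone convergence give the slope formula. The only difference is bookkeeping: the paper treats $\{x>1\}$ by dominated convergence, which is also where it gets finiteness of $\int_\R g'(\infty)x\indicator{1<x}F(\d x)$, whereas you use a single monotone-convergence step for all $x$ and read that finiteness off the pointwise bound $0\le g'(\infty)x\le g(x)$.
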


\begin{proof}
Since $g$ is non-decreasing, concave, and zero at $0$, we have 
\begin{equation*}
0\leq g(xy)\indicator{1<x}\leq (y\vee1) g(x)\indicator{1<x},\qquad y\geq 0, 
\end{equation*}
which yields 
\[
\int_{\R}g(xy)\indicator{1<x}F(\d x)<\infty,\qquad y\geq0.
\]
Moreover, concavity gives 
\[
g(xy)-g'(0)\tr(x)y\leq0,\qquad x\leq1,\quad y\geq0,
\]
and hence $\psi<\infty$ is well defined by \eqref{eq:psi} on $[0,\infty)$.  

Concavity and monotonicity further yield $g(y)/y\downarrow g'(\infty)$ as $y\uparrow\infty$. Thus, for $y\geq1$,
\[
0\leq y^{-1}g(xy)\indicator{1<x}\leq g(x)\indicator{1<x},
\qquad
\lim_{y\uparrow\infty}
y^{-1}g(xy)\indicator{1<x}
= g'(\infty)x\indicator{1<x},
\]
and dominated convergence gives
\[
\int_\R g'(\infty) x\indicator{1<x}F(\d x)<\infty.
\]
Furthermore,
\[
0\geq y^{-1}\left(g(xy)-g'(0)\tr(x)y\right)\indicator{0<x\leq1}
\downarrow
\left(g'(\infty)x-g'(0)\tr(x)\right)\indicator{0<x\leq1},
\qquad y\uparrow\infty,
\]
and, similarly, $g(y)/y\uparrow g'(-\infty)$ as $y\downarrow-\infty$ gives
\[
0\geq y^{-1}\left(g(xy)-g'(0)\tr(x)y\right)\indicator{x<0}
\downarrow
\left(g'(-\infty)x-g'(0)\tr(x)\right)\indicator{x<0},
\qquad y\uparrow\infty.
\]
An application of monotone convergence to the last two displays, together with the preceding dominated convergence 
yields \eqref{eq:230718}.
\end{proof}

We next note existence, and in certain circumstances also uniqueness, of a maximizer.  

\begin{lemma}\label{L:260830London}
Assume $g''(0)<0$. Then the following statements hold.
\begin{enumerate}[(1)]
\item\label{London.1} The lineality space of $\gfrak$, i.e., the largest linear subspace inside the recession cone of $\gfrak$, equals
\[
\mathcal{L}
=
\left\{
\lambda\in\R^d:
\lambda b^{\tr}
-
\int_{\R^d}\lambda \tr(x)F(\d x)=0,\quad
\lambda c=0,\quad
\int_{\R^d}\indicator{\lambda x\ne0}F(\d x)=0
\right\}.
\]
\item\label{London.2} If $g$ is upper semi-continuous and satisfies the Inada condition~\eqref{eq:Inada}, and
$(b^{\tr},c,F)$ is instantaneously arbitrage-free, then $\gfrak$ attains its maximum in $\R^d$.
\item\label{London.3} If $\gfrak<\infty$ and $g$ is strictly concave, then any
two maximizers of $\gfrak$ differ by an element of $\mathcal{L}$.
\end{enumerate}
\end{lemma}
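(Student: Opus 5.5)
We prove the three statements in order, working throughout with the pointwise function $\gfrak$ on an $\R^d$-triplet $(b^{\tr},c,F)$. The key tool is the recession function of the concave function $\gfrak$ along rays, which Lemma~\ref{L:260422Granada} computes when it is applied to the one-dimensional triplet $(b^{\tr},c,F)^{\lambda\id}$.

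\emph{Step 1: the lineality space (\ref{London.1}).} Take $\lambda\in\mathcal L$. Then $\lambda x=0$ for $F$-a.e.\ $x$ and $\lambda c=0$, so $\gfrak(\mu+y\lambda)=\gfrak(\mu)$ for all $y\in\R$. Hence $\mathcal L$ lies in the lineality space.

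Conversely, suppose $\gfrak(\mu+y\lambda)$ is constant in $y$ for some (hence every) $\mu\in\dom\gfrak$. Start from $\mu=0$, where $\gfrak(0)=0$. Then $\gfrak(y\lambda)=\gfrak(0)=0$ for all $y$, so $\lim_{y\to\pm\infty}\gfrak(y\lambda)/y=0$.
\begin{itemize}
\item Because $g''(0)<0$, the term $-\infty\indicator{g''(0)c<0}$ in \eqref{eq:230718} forces $\lambda c\lambda^\top=0$, that is, $\lambda c=0$.
\item Since $g''(0)<0$, the function $g$ is strictly below its tangent near $0$. Hence $g(x)-g'(0)x<0$ for $x\neq0$ near $0$, and by concavity for all $x\ne0$.
\item Write $0=\gfrak(y\lambda)$ as $y$ times the drift term $\lambda b^{\tr}-\int\lambda\tr\,\d F$ plus $\int (g(y\lambda x)-g'(0)y\lambda x)F(\d x)$. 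The integral is nonpositive, and strictly negative unless $F(\lambda x\neq0)=0$.
\item Using both signs of $y$ then gives $F(\lambda x\ne0)=0$ and that the drift term vanishes.
\end{itemize}
The care needed is to justify the decomposition when $\int\abs{\lambda x}\indicator{\abs{x}>1}F(\d x)=\infty$. Constancy along the full line makes $g(y\lambda x)$ integrable for all $y$, which I would use to avoid this issue, or else I would argue with $\gfrak(y\lambda)\le$ the quadratic part $\tfrac12 g''(0)$-type bound near zero.

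\emph{Step 2: existence of a maximizer (\ref{London.2}).} First, upper semicontinuity. If $\gfrak$ takes the value $+\infty$ anywhere, the maximum is attained trivially, so assume $\gfrak<\infty$. Then $\gfrak$ is upper semicontinuous by Lemma~\ref{L:260428Granada}.

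By \citet[Theorem~27.3]{rockafellar.70}, it suffices to show that every direction of recession of $\gfrak$ lies in the lineality space $\mathcal L$. So fix $\lambda\notin\mathcal L$ and compute $\lim_y\gfrak(y\lambda)/y$ via \eqref{eq:230718}, where the Inada condition gives $g'(\infty)=0$ and $g'(-\infty)=\infty$. There are three cases.
\begin{itemize}
\item If $\lambda c\ne0$, the limit is $-\infty$.
\item If $F(\lambda x<0)>0$, then $g'(-\infty)=\infty$ makes the limit $-\infty$.
\item Otherwise $\lambda c=0$ and $F(\lambda x<0)=0$. Then INA \eqref{eq:INA} gives two subcases. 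In the first, $F(\lambda x>0)>0$ and $\lambda b^{\tr}-\int\lambda\tr\,\d F<0$; the limit then equals this negative number plus the nonpositive integral $\int(0-g'(0)\tr(\lambda x))\indicator{\lambda x>0}F(\d x)$, which is $<0$. In the second, $\lambda\in\mathcal L$, which we excluded.
\end{itemize}
In every case $\lambda$ is not a direction of recession, so Theorem~27.3 applies and the maximum is attained.

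\emph{Step 3: uniqueness up to $\mathcal L$ (\ref{London.3}).} Let $\lambda_1,\lambda_2$ be two maximizers with common finite value. By concavity, $\gfrak$ is constant on the segment between them, so each inequality in the concavity chain is an equality. Concretely:
\begin{itemize}
\item The quadratic term is strictly concave along directions with $(\lambda_1-\lambda_2)c\ne0$, so we must have $(\lambda_1-\lambda_2)c=0$.
\item Strict concavity of $g$ forces $\lambda_1x=\lambda_2x$ for $F$-a.e.\ $x$, since otherwise $g(\tfrac12(\lambda_1+\lambda_2)x)>\tfrac12 g(\lambda_1x)+\tfrac12 g(\lambda_2x)$ on a set of positive $F$-measure.
\item With these two facts, the remaining function of $\lambda$ along the segment is affine, with slope equal to the drift term of $\lambda_1-\lambda_2$. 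Since it takes equal values at the two ends, that drift term is zero.
\end{itemize}
Hence $\lambda_1-\lambda_2\in\mathcal L$. The main obstacle I expect is the integrability bookkeeping in Step 1, namely splitting $\gfrak$ into a linear part and an integral part when the large jumps are not integrable.
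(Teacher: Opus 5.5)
Your proposal follows the paper's proof in all three parts: the values at $\pm\lambda$ for the lineality space, the asymptotic slope from Lemma~\ref{L:260422Granada} plus INA and \citet[Theorem~27.3]{rockafellar.70} for existence, and the midpoint of two maximizers for uniqueness. Two bookkeeping slips should be fixed.

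In Step~1 the decomposition you write is not an identity. If you pair it with $\int(g(y\lambda x)-g'(0)y\lambda x)F(\d x)$, the linear term must be $g'(0)y\bigl(\lambda b^{\tr}+\int\lambda(x-\tr(x))F(\d x)\bigr)$, not $g'(0)y\bigl(\lambda b^{\tr}-\int\lambda\tr\,\d F\bigr)$. Your proposed repair does work: finiteness of $\gfrak(\pm\lambda)$ together with $g(z)\leq g'(0)z$ gives $\int\abs{\lambda x}\indicator{\abs{x}>1}F(\d x)<\infty$.

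The paper sidesteps the issue entirely. Add $\gfrak(\lambda)=0$ and $\gfrak(-\lambda)=0$ at the level of integrands; the terms $\pm g'(0)b^{\tr}\lambda$ and $\mp g'(0)\lambda\tr(x)$ cancel exactly. This leaves $g''(0)\lambda c\lambda^\top+\int(g(\lambda x)+g(-\lambda x))F(\d x)=0$, with both summands nonpositive. Since $g(z)+g(-z)<0$ for $z\neq0$, you get $\lambda c=0$ and $F(\lambda x\neq0)=0$ at once. No appeal to \eqref{eq:230718} or to large-jump integrability is needed, and substituting back into $\gfrak(\lambda)=0$ gives the drift condition.

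In Step~2, consider the case $\lambda c=0$ and $F(\lambda x<0)=0$. Relation \eqref{260901Santiago} shows that $g'(0)b^{\tr(\lambda\id)}-g'(0)\int\tr(z)\indicator{z>0}F^{\lambda\id}(\d z)$ equals exactly $g'(0)\bigl(\lambda b^{\tr}-\int\lambda\tr\,\d F\bigr)$. So there is no extra nonpositive integral to add, though your conclusion that the slope is negative is unaffected.

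You should also restrict the case analysis to $\lambda\in\edom\gfrak$. Lemma~\ref{L:260422Granada} requires $\int g\indicator{\id>1}\,\d F^{\lambda\id}<\infty$, which $\gfrak<\infty$ guarantees only when $\gfrak(\lambda)>-\infty$. Any $\lambda$ with $\gfrak(\lambda)=-\infty$ is trivially not a direction of recession, since $\gfrak(0)=0$.
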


\begin{proof}
Observe first that $g''(0)<0$ yields $g'(0)>0$ by monotonicity of $g$.

Let us first argue \ref{London.1}.
Suppose that $\lambda$ belongs to the lineality space of $\gfrak$. Then
$
    \gfrak(\lambda)=0=\gfrak(-\lambda)$.
Consequently,
\[
0
=
\gfrak(\lambda)+\gfrak(-\lambda)
=
g''(0)\lambda c\lambda^\top
+
\int_{\R^d}
\left(
g(\lambda x)+g(-\lambda x)
\right)F(\d x).
\]
Both terms on the right-hand side are nonpositive and therefore equal
to zero. This yields $\lambda c=0$. Moreover, concavity and $g(0)=0$
give
$
g(z)+g(-z)\leq0$ for all $z\in\R$, 
with strict inequality for $z\ne0$. Hence
$
\int_{\R^d}\indicator{\lambda x\ne0}F(\d x)=0$.
Substitution into $\gfrak(\lambda)=0$ gives
\[
\lambda b^{\tr}
-
\int_{\R^d}\lambda\tr(x)F(\d x)=0,
\]
and hence $\lambda\in\mathcal L$.

Conversely, if $\lambda\in\mathcal L$, a direct substitution yields
$\gfrak(y\lambda)=\gfrak(0)$ for all $y \in \R$.
Thus $\lambda$ belongs to the lineality space of $\gfrak$, which proves
\ref{London.1}.

Suppose now that the assumptions of \ref{London.2} hold in addition. If $\gfrak\not<\infty$, then $\gfrak(\lambda)=\infty$ for some
$\lambda\in\R^d$, and there is nothing to prove. Hence suppose
$\gfrak<\infty$.
Fix $\lambda\in\edom\gfrak$.
The condition $\gfrak<\infty$ yields
$g\indicator{\id>1}$ integrates $F^{\lambda\id}$, hence
$0\leq g(y\id)\indicator{\id>1}\leq (y\vee1)g\indicator{\id>1}$
integrates $F^{\lambda\id}$ for all $y\geq0$.
Using
\begin{equation}\label{260901Santiago}
\left.
\begin{aligned}
b^{\tr(\lambda \id)}
 &{}=
 b^{\tr}\lambda
 +\int_{\R^d}
 \bigs(\tr(
   \lambda x)
   -\lambda \tr(x)
 \bigs)F(\d x),\\
c^{\lambda\id}
 &{}=\lambda c\lambda^\top,\\
F^{\lambda\id}
 &{}\text{ is the image measure of $F$ under
 $\lambda\id$, with its mass at zero removed\qquad}
 \end{aligned}\right\}
\end{equation}
one then obtains from Lemma~\ref{L:B2}\ref{L:B2.2}, with
$\xi=\lambda\id$ and $\psi = g_{H,K}(y\id)$,
\[
\gfrak(\lambda y)
=
g'(0)b^{\tr(\lambda\id)} y
+\frac12g''(0)c^{\lambda \id}y^2
+\int_{\R}
\bigs(
  g(xy)-g'(0)\tr(x)y
\bigs)F^{\lambda\id}(\d x),
\qquad y\geq0.
\]
The Inada condition~\eqref{eq:Inada} and
Lemma~\ref{L:260422Granada} therefore yield
\begin{equation}\label{eq:260424Granada}
\begin{split}
\infty>
\lim_{y\uparrow\infty}\frac{\gfrak(\lambda y)}{y}
={}&
g'(0)
\left(
 b^{\tr(\lambda\id)}
 -
 \int_{\R}\tr(x)\indicator{0<x}F^{\lambda\id}(\d x)
\right)
-
\infty
\left(
 \indicator{c^{\lambda\id}>0}
 +
 \indicator{F^{\lambda\id}(\R_{-})>0}
\right).
\end{split}
\end{equation}

If $c^{\lambda \id}>0$ or
$F^{\lambda \id}(\R_-)>0$, the limit in
\eqref{eq:260424Granada} equals $-\infty$. In the remaining case,
\begin{equation*}
 \text{$c^{\lambda \id}=0$ and
$F^{\lambda \id}(\R_-)=0$.}
\end{equation*}
In view of \eqref{260901Santiago},
instantaneous absence of arbitrage now gives one of the following two
subcases. Either
\[
F^{\lambda\id}(\R_+)>0
\quad\text{and}\quad
b^{\tr(\lambda\id)}
-
\int_{\R}\tr(x)
F^{\lambda\id}(\d x)
<0,
\]
in which case the limit in \eqref{eq:260424Granada} is strictly
negative, or
\begin{align*} 
F^{\lambda\id}(\R_+)=0
\quad\text{and}\quad
b^{\tr(\lambda\id)}
-
\int_{\R}\tr(x)
F^{\lambda\id}(\d x)
=
b^{\tr(\lambda\id)}
=0.
\end{align*}
In the latter subcase,
$
    \gfrak(\lambda y)=0$ for all $y\in\R$.
Thus $\lambda$ belongs to the lineality space of $\gfrak$.

Therefore, for every $\lambda\in\dom\gfrak$, either $\lambda$ belongs
to the lineality space of $\gfrak$, or
\[
    \lim_{y\uparrow\infty}
    \frac{\gfrak(\lambda y)}{y}<0,
\]
and hence $\gfrak(\lambda y)\to-\infty$ as
$y\uparrow\infty$. By Lemma~\ref{L:260428Granada}, $\gfrak$ is upper
semi-continuous. It now follows from
\citet*[Theorem~27.3]{rockafellar.70} that $\gfrak$ attains its
maximum, yielding \ref{London.2}.

Finally, suppose that the assumptions of \ref{London.3} hold and let
$\lambda,\lambda'$ be two maximizers of $\gfrak$. Set
$
    \zeta=\lambda-\lambda'.
$
Since $\gfrak$ is concave, $(\lambda+\lambda')/2$ too is a
maximizer, and therefore
\begin{align*}
0
={}&
\gfrak\left(\frac{\lambda+\lambda'}{2}\right)
-\frac12\gfrak(\lambda)
-\frac12\gfrak(\lambda')\\
={}&
-\frac18g''(0)\zeta c\zeta^\top
+
\int_{\R^d}
\left(
g\left(\frac{\lambda+\lambda'}{2}x\right)
-\frac12g(\lambda x)
-\frac12g(\lambda'x)
\right)F(\d x).
\end{align*}
Both terms on the right-hand side are nonnegative and therefore equal
to zero. This yields $\zeta c=0$ and, by strict concavity of $g$,
$\zeta\id=0$, $F$-almost everywhere. Since $\gfrak(\lambda)=\gfrak(\lambda')\in\R $, 
\[
0
=
\gfrak(\lambda)-\gfrak(\lambda')
=
g'(0)
\left(
 \zeta b^{\tr}
 -
 \int_{\R^d}\zeta\tr(x)F(\d x)
\right).
\]
Hence $\zeta\in\mathcal L$, yielding \ref{London.3}.
\end{proof}

Let $f:\R^d\to[-\infty,\infty]$ be a concave function. Recall that for $x\in\R^d$ with $f(x)$ finite and $y\in\R^d$, the directional derivative 
\[ 
f'(x;y)
=
\lim_{\varepsilon\downarrow0}
\frac{
f\bigl(x+\varepsilon y\bigr)
-f(x)
}{\varepsilon}
\]
is well defined by concavity with values in $[-\infty,\infty]$ and, furthermore,
\[f'(x;y)\geq
f\bigl(x+y\bigr)
-f(x);\] see \citet[Theorem~23.1]{rockafellar.70}.

The next lemma gives a differential characterisation of an optimizer of $\gfrak$.
\begin{lemma}\label{L:FOC Levy}
Fix $\lopt\in\R^d$. For each $\lambda\in\R^d$, 
\begin{align} \label{eq:260925.1}
    \xi^\lambda = g'_+(\lopt\id)((\lambda-\lopt)\id)^+-g'_-(\lopt\id)((\lambda-\lopt)\id)^-
\end{align}
is a V-function.
Moreover,
    $\lopt$ maximizes $\gfrak$ if and only if $\gfrak(\lopt)=\infty$ or $\gfrak(\lopt)\in\R$ and $\xi^\lambda$
is $F$-special for all $\lambda\in\edom\gfrak$, and  
\begin{align*} 
-\infty<b^{\xi^\lambda}\leq 0,\quad \lambda\in\edom\gfrak.
\end{align*}
 In the latter case, $\gfrak<\infty$ and the V-function
$-\xi^0=(g'_-\id^+-g'_+\id^- )(\lopt\id)$ is $F$-$F$-special with 
\begin{align*} 
0 \leq b^{(g'_-\id^+-g'_+\id^- )(\lopt\id)}< \infty.
\end{align*}
\end{lemma}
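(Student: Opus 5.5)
The plan is to reduce everything to one-sided directional derivatives of the concave function $\gfrak$ at $\lopt$, computed by differentiating under the integral in \eqref{eq:locutil}. The first claim, that $\xi^\lambda$ is a V-function, is Lemma~\ref{L:260508London} with $\lambda'=\lambda-\lopt$. That lemma gives Peano first derivative $g'(0)(\lambda-\lopt)$ and second Peano derivative $g''(0)\bigl(\lopt^\top(\lambda-\lopt)+(\lambda-\lopt)^\top\lopt\bigr)$. Measurability and $\xi^\lambda(0)=0$ are clear, and on $\dom g(\lopt\id)$ the one-sided derivatives are finite.

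\emph{Key identity.} If $\gfrak(\lopt)\in\R$ and $\lambda\in\edom\gfrak$, I will show
\[
\gfrak'(\lopt;\lambda-\lopt)=b^{\xi^\lambda}\in[-\infty,\infty).
\]
The value $-\infty$ occurs exactly when $\xi^\lambda$ fails to be $F$-special, and then only because its negative part is non-integrable.

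To prove this, write the difference quotient
\[
\frac{\gfrak(\lopt+\varepsilon(\lambda-\lopt))-\gfrak(\lopt)}{\varepsilon}
\]
as a drift part, a quadratic part and an integral part. The drift and $c$ parts converge to $g'(0)(\lambda-\lopt)b^{\tr}+g''(0)\lopt c(\lambda-\lopt)^\top$, which matches the $c$-term of $b^{\xi^\lambda}$ because $\trace(D^2\xi^\lambda(0)c)/2$ equals this bilinear form. For the integrand, concavity gives
\[
\frac{g((\lopt+\varepsilon(\lambda-\lopt))x)-g(\lopt x)}{\varepsilon}\;\uparrow\;\xi^\lambda(x)\qquad\text{as }\varepsilon\downarrow0,
\]
monotonically, and it is bounded below by the $\varepsilon=1$ quotient $g(\lambda x)-g(\lopt x)$. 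That lower bound is quasi-integrable, with the negative part controlled because $\lambda\in\edom\gfrak$ and $\gfrak(\lopt)$ is finite. The compensating terms $g'(0)(\lambda-\lopt)\tr(x)$ are handled as in the proof of Lemma~\ref{L:B1}, using the $O(|x|^2)$ behaviour near zero supplied by twice Peano differentiability. Monotone convergence then gives the identity, with the extended convention; this also settles $F$-specialness except for the possibility of a $+\infty$ limit.

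\emph{Ruling out $+\infty$.} If the limit were $+\infty$, then concavity along the segment would force $\gfrak(\lambda)=+\infty$, because the directional derivative is at most $\gfrak(\lambda)-\gfrak(\lopt)$ by the inequality recalled before the lemma. So when $\gfrak<\infty$ on the segment the limit is finite or $-\infty$.

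\emph{Equivalence.} Necessity: if $\lopt$ maximizes and $\gfrak(\lopt)<\infty$, then $\gfrak(\lopt)\ge\gfrak(0)=0$ is finite and $\gfrak<\infty$ everywhere. Every directional derivative is then $\le0$, so $b^{\xi^\lambda}\le0$. For $-\infty<b^{\xi^\lambda}$, I will use the upper bound
\[
\xi^\lambda\le g(\lambda\id)-g(\lopt\id),
\]
together with integrability from below of the monotone limit; this is the delicate part. Sufficiency: $\gfrak(\lambda)-\gfrak(\lopt)\le\gfrak'(\lopt;\lambda-\lopt)=b^{\xi^\lambda}\le0$ for $\lambda\in\edom\gfrak$, and trivially for $\lambda$ outside it. The case $\gfrak(\lopt)=\infty$ is trivial.

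\emph{Last statement.} Take $\lambda=0$: $-\xi^0=k(\lopt\id)$, so $b^{-\xi^0}=-b^{\xi^0}\in[0,\infty)$. Also, $\gfrak<\infty$ follows because a finite maximum bounds $\gfrak$.

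\emph{Expected obstacle.} The hardest step is justifying the limit interchange and the finiteness $b^{\xi^\lambda}>-\infty$ at a maximizer. This needs a uniform integrable lower bound on the difference quotients and care with $\dom g$ near $\inf\dom g$, where $g'_+$ may blow up. I would first try the convex-combination trick: compare at $\lopt+\tfrac12(\lambda-\lopt)$, which lies in $\edom\gfrak$, to obtain an integrable minorant.
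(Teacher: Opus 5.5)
Your architecture is the paper's. Difference quotients of $\mu\mapsto g(\mu x)$ increase to $\xi^\lambda(x)$ as $\varepsilon\downarrow0$, the $\varepsilon=1$ quotient serves as minorant, and monotone convergence identifies $\gfrak'(\lopt;\lambda-\lopt)$ with the extended drift $\lim_{K\uparrow\infty}b^{\xi^\lambda\wedge K}$. Sufficiency is then $\gfrak(\lambda)-\gfrak(\lopt)\le\gfrak'(\lopt;\lambda-\lopt)=b^{\xi^\lambda}\le0$. In the middle, however, your signs are reversed, and as a result $F$-specialness of $\xi^\lambda$ at a maximizer is not actually proved.

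Concavity gives $\xi^\lambda\geq g(\lambda\id)-g(\lopt\id)$, since the directional derivative dominates the chord. It does not give the ``upper bound'' $\xi^\lambda\le g(\lambda\id)-g(\lopt\id)$ that you plan to use; that bound is false whenever $g$ is strictly concave and $(\lambda-\lopt)x\neq0$. Your own key-identity paragraph already notes that this minorant, minus $g'(0)(\lambda-\lopt)\tr$, has $F$-integrable negative part when $\lambda\in\edom\gfrak$ and $\gfrak(\lopt)\in\R$. Hence the extended value of $b^{\xi^\lambda}$ lies in $(-\infty,\infty]$, not $[-\infty,\infty)$, and failure of specialness can only come from the positive part. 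In particular, the step you flag as delicate, $b^{\xi^\lambda}>-\infty$, is immediate from $\gfrak'(\lopt;\lambda-\lopt)\geq\gfrak(\lambda)-\gfrak(\lopt)>-\infty$.

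The real issue is therefore the value $+\infty$, and your argument excluding it applies the inequality recalled before the lemma backwards. That inequality reads $f'(x;y)\geq f(x+y)-f(x)$, so finiteness of $\gfrak$ along the segment cannot rule out $\gfrak'(\lopt;\lambda-\lopt)=+\infty$. This value genuinely occurs at boundary points of $\edom\gfrak$. Take $d=1$, $g=g_\rho$ with $\rho>1$, $c=0$, and $F=\delta_{-1}$. Then $\gfrak(\mu)=(b^{\tr}+1)\mu+g(-\mu)$ is finite on $(-\infty,1/\rho]$, yet $\gfrak'(1/\rho;-1)=+\infty$ because $g'_+(-1/\rho)=\infty$.

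What excludes $+\infty$ in the necessity direction is optimality. Every quotient $(\gfrak(\lopt+\varepsilon(\lambda-\lopt))-\gfrak(\lopt))/\varepsilon$ is $\le0$, so the monotone limit is $\le0$. Monotone convergence together with the integrable minorant then shows that the positive part is integrable, i.e.\ $\xi^\lambda$ is $F$-special with $-\infty<b^{\xi^\lambda}\le0$; this is precisely the paper's argument. You do write that every directional derivative is $\le0$, but under your reversed bookkeeping this fact is never used for specialness. Once the roles are swapped (minorant for the negative part, optimality for the positive part), the proof closes, and the convex-combination trick is unnecessary.
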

\begin{proof} 
Fix $\lopt\in\R^d$ and for $\lambda\in\R^d$ consider 
$\xi^\lambda$ as in \eqref{eq:260925.1}.
By Lemma~\ref{L:260508London}, $\xi^\lambda$ is a V-function with 
\[
D\xi^\lambda(0)=g'(0)\zeta,
\qquad
D^2\xi^\lambda(0)
=g''(0)(\lopt^\top\zeta+\zeta^\top\lopt),
\]
where $\zeta=\lambda-\lopt$.
Observe that $\gfrak$ is concave and $\{0\}\subset\edom\gfrak\neq\emptyset$.
Furthermore, by monotone convergence, 
\begin{equation}\label{eq:260923Granada}
-\infty<\gfrak(\lambda)-\gfrak(\lopt)\leq\gfrak'(\lopt;\lambda-\lopt) = \lim_{K\uparrow\infty}b^{\xi^\lambda\wedge K},\qquad \lambda,\lopt\in\dom\gfrak.
\end{equation}

Suppose first that $\lopt$ maximizes $\gfrak$. Since $\gfrak(0)=0$, we have $\gfrak(\lopt)>-\infty$. There is
nothing to prove if $\gfrak(\lopt)=\infty$. Suppose therefore
that $\gfrak(\lopt)\in\R$. Then $\gfrak<\infty$. For $\lambda\in\edom\gfrak$, concavity
bounds $\xi^\lambda-g'(0)(\lambda-\lopt)\tr$ below by the
$F$-integrable function
\[
g(\lambda\id)-g(\lopt\id)-g'(0)(\lambda-\lopt)\tr.
\]
Moreover, optimality and \eqref{eq:260923Granada} imply that $\lim_{K\uparrow\infty}b^{\xi^\lambda\wedge K}$ is finite. Monotone convergence
therefore yields that $\xi^\lambda$ is ($F$-compatible and)
$F$-special, and hence
\[
-\infty<\gfrak'(\lopt;\lambda-\lopt)
=b^{\xi^\lambda}\leq0,
\qquad \lambda\in\edom\gfrak.
\]

Conversely, suppose that $\gfrak(\lopt)\in\R$
and $\xi^\lambda$ is $F$-special with
\[
    -\infty
    <
    b^{\xi^\lambda}
    \leq0,\qquad \lambda\in\edom\gfrak.
\]
Fix $x\in\R^d$ and consider the concave function $\psi:\R^d\to[-\infty,\infty]$ given by $\psi(\lambda) = g(\lambda x)$. 
By concavity of $g$, and therefore of $\psi$,
\[
g(\lambda x)-g(\lopt x)
\leq \psi'(\lopt;\lambda-\lopt) =
\xi^\lambda(x),
\qquad \lambda\in\R^d,x\in\R^d, \lopt x\in\dom g.
\]
Hence
\begin{align*} 
g(\lambda x)-g'(0)\lambda \tr(x)
\leq
g(\lopt x)-g'(0)\lopt \tr(x)
+
\xi^\lambda(x)-g'(0)\zeta \tr(x).
\end{align*}
Since $\lopt\in\dom\gfrak$ and $b^{\xi^\lambda}
    \leq0$ for all $\lambda\in\edom\gfrak$, by integrating the above we obtain 
 $\gfrak(\lambda)<\infty$ for any $\lambda\in\R^d$. Optimality now follows from \eqref{eq:260923Granada} with $\lim_{K\uparrow\infty}b^{\xi^\lambda\wedge K}=b^{\xi^\lambda}\leq0$.
\end{proof}

\section{Proof of Lemma~\ref{L:T}}\label{Appx:D}
\begin{proof}[Proof of Lemma~\ref{L:T}]
 Observe that $g''(0)<0$ yields $g'(0)>0$ by monotonicity of $g$. Furthermore, $k$ is twice Peano-differentiable at zero by
Lemma~\ref{L:260508London} and $g'(0)\id-g$ and $g-k$ are both twice Peano-differentiable at
zero with first derivative equal to $0$ and second derivative equal
to $-g''(0)>0$.

 Let
$\alpha_n=\beta_n=2^{-n}$ for $n\in\mathbb N$, and denote by
$D_n$ the predictable set on which $-2\alpha_n\in\dom g$ and, for
every $x\in\mathbb Q$ with $\abs{x}\leq\alpha_n$,
\[
    \beta_nx^2
    \leq
    g'(0)x-g(x)
    \leq
    \frac{1}{\beta_n}x^2,
    \qquad
    \beta_nx^2
    \leq
    g(x)-k(x).
\]
The sets $D_n$ are predictable and increase to
$\Omega\times[0,\infty)$, as $n$ tends to infinity, by the stated differentiability properties of the two functions at zero. 
Set now
$
    D'_n
    =
    D_n\setminus\bigcup_{\ell<n}D_\ell$.
Then
\[
    \alpha=\sum_{n\in\mathbb N}\alpha_n\indicator{D'_n},
    \qquad
    \beta=\sum_{n\in\mathbb N}\beta_n\indicator{D'_n},
    \qquad
    \gamma=\sum_{n\in\mathbb N}\frac{1}{\beta_n^2}\indicator{D'_n}
\]
are predictable and satisfy \eqref{eq:260519}. The inequalities initially
obtained for rational arguments extend to all arguments by continuity of
$g'(0)\id-g$, left-continuity of $g-k$ on $(0,\infty)$, and
right-continuity of $g-k$ on $(-\infty,0)$. 
 \end{proof}

\def\MR#1{\href{http://www.ams.org/mathscinet-getitem?mr=#1}{MR#1}}
\def\ARXIV#1{\href{https://arxiv.org/abs/#1}{arXiv:#1}}
\def\doilink#1{\href{https://doi.org/#1}{doi:#1}}

\end{document}